\newtheoremstyle{plainNoItalics}{}{}{\normalfont}{}{\bfseries}{.}{ }{}
\theoremstyle{plain}
\newtheorem{thm}{Theorem}[section]
\newtheorem{lem}{Lemma}[section]
\theoremstyle{plainNoItalics}
\newtheorem{defn}[thm]{Definition}
\newtheorem{rem}[thm]{Remark}
\newtheorem{prop}[thm]{Proposition}
\newcommand{\beq}{\begin{equation}}
\newcommand{\eeq}{\end{equation}}
\newcommand{\beqa}{\begin{eqnarray}}
\newcommand{\eeqa}{\end{eqnarray}}
\newcommand{\bit}{\begin{itemize}}
\newcommand{\eit}{\end{itemize}}
\newcommand{\bedef}{\begin{defn}}
\newcommand{\edefn}{\end{defn}}
\newcommand{\bpro}{\begin{prop}}
\newcommand{\epro}{\end{prop}}
\newcommand{\df}{\partial}
\newcommand{\Dx}{\Delta x}
\newcommand{\Dt}{\Delta t}
\newcommand{\vareps}{\varepsilon}
\newcommand{\mD}{{\mathcal D}}
\newcommand{\iL}{{i-\frac{1}{2}}}
\newcommand{\testR}{{\phi}}   
\newcommand{\testG}{{\psi}}   
\newcommand{\bI}{{\bf{I}}}
\newcommand{\mA}{\mathcal A}
\newcommand{\bh}{{b_{h,v}}}
\newcommand{\lgl}{\langle}
\newcommand{\rgl}{\rangle}
\begin{document}
\baselineskip=1.2pc
\title{Asymptotic preserving IMEX-DG-S schemes for linear kinetic transport equations based on Schur complement }

\author{Zhichao Peng\thanks{Department of Mathematical Sciences, Rensselaer Polytechnic Institute, Troy, NY 12180, U.S.A. {\tt pengz2@rpi.edu}}
\and
 Fengyan Li\thanks{Department of Mathematical Sciences, Rensselaer Polytechnic Institute, Troy, NY 12180, U.S.A.
 {\tt lif@rpi.edu}. Research is supported by NSF grants DMS-1719942 and DMS-1913072.}}
\maketitle

\abstract{
We consider a linear kinetic transport equation under a diffusive scaling, that converges to a diffusion equation as the Knudsen number $\vareps\rightarrow 0$.  In \cite{boscarino2013implicit,peng2019stability}, to achieve the asymptotic preserving (AP) property and  unconditional stability in the diffusive regime with $\vareps\ll 1$, numerical schemes are developed based on an additional reformulation of the even-odd or micro-macro decomposed version of the equation. The key of the 
reformulation is to add a weighted diffusive term on both sides of one equation in the decomposed system. The choice of the weight function, however,  is problem-dependent and ad-hoc, and it can affect 
the performance of numerical simulations. To avoid issues related to the choice of the weight function and still obtain the AP property and unconditional stability in the diffusive regime, we propose in this paper a new family of AP schemes, termed as  IMEX-DG-S schemes,  directly solving the micro-macro decomposed system without any further reformulation. The main ingredients of the IMEX-DG-S schemes include globally stiffly accurate implicit-explicit (IMEX) Runge-Kutta (RK) temporal discretizations with a new IMEX strategy, discontinuous Galerkin (DG) spatial discretizations, discrete ordinate methods for the velocity space, and the application of the Schur complement to the algebraic form of the schemes 
 to control the overall computational cost. The AP property of the schemes is shown formally. With an energy type stability analysis applied to the first order scheme, and Fourier type stability analysis applied to the first to third order schemes,  we confirm the uniform stability of the methods with respect to $\varepsilon$ and the unconditional stability in the diffusive regime. A series of numerical examples are presented to demonstrate the performance of the new schemes.
}

\section{Introduction}
\label{sec:introduction}

Important physical phenomena like radiative  transfer and neutron transport can be modeled by  
kinetic transport equations. 
In this work, we  consider  a linear kinetic transport equation under a diffusive scaling:
\begin{align}
\label{eq:f_model}
\vareps \partial_t f + v\partial_x f = \frac{\sigma_s}{\vareps}(\lgl f \rgl - f) - \vareps \sigma_a f.
\end{align}
Here, $f(x,v,t)$ is the 
probability distribution function of particles,
$x\in \Omega_x$ is the spatial position, $v\in \Omega_v$ is the velocity  with $\Omega_v$ being  bounded, and $t$ is  time.  $\sigma_s(x)\geq 0$ and $\sigma_a(x)\geq 0$ are the scattering and the absorption coefficients, respectively. $\lgl f \rgl =\int_{\Omega_v} f d\nu$,  
where $\nu$ is a measure of the velocity space satisfying $\int_{\Omega_v}d\nu=1$. 
$\vareps>0$ denotes the Knudsen number, which is the ratio of mean free path of particles to the characteristic length.

The linear kinetic transport equation \eqref{eq:f_model} has a multiscale nature. With the assumption $\sigma_s>0$, the kinetic transport equation will converge to its diffusion limit as $\vareps\rightarrow 0$: 
\begin{align}
\label{eq:diffusion_limit_intro}
\partial_t \rho =  \lgl v^2\rgl \partial_{x}(\partial_x\rho/\sigma_s)-\sigma_a \rho,
\end{align}
where $\rho=\lgl f \rgl$ is the macroscopic density of particles. This multiscale nature poses computational challenges: (1) a standard explicit numerical scheme has a time step restriction $\Dt\leq O(\vareps h)$ ($h$ is the mesh size) due to numerical stability, with prohibitive computational cost for small $\vareps$; (2)  an implicit scheme, though possibly being  unconditionally stable  and hence not suffering from stability issue, may still fail to capture the correct physical limit as $\vareps\rightarrow 0$ on under-resolved meshes \cite{caflisch1997uniformly,naldi1998numerical}.

Asymptotic preserving (AP) schemes \cite{jin2010asymptotic,degond2011asymptotic}, which preserve the asymptotic behavior of the physical model on the discrete level, are a well established candidate to address above challenges.  An AP scheme for \eqref{eq:f_model} converges to a scheme solving the limiting  diffusion equation \eqref{eq:diffusion_limit_intro} as $\vareps\rightarrow 0$, while being consistent and stable for a broad range of $\vareps$, even on under-resolved meshes for small $\vareps$. 
 AP schemes having explicit limiting schemes are considered in \cite{jin1998diffusive,klar1998asymptotic,jin2000uniformly,lemou2008new, jang2015high}.
 With the diffusive nature of the physical limit and the explicitness of the limiting schemes, these methods have a parabolic time step restriction $\Dt= O(h^2)$ in the diffusive regime $\vareps\ll 1$. To enhance the stability, AP schemes with  implicit limiting schemes are developed in \cite{boscarino2013implicit,peng2019stability}, and they are  demonstrated, either numerically or analytically,  to be unconditionally stable in the diffusive regime.

To achieve unconditional stability in the diffusive regime, in \cite{boscarino2013implicit}, a second reformulation is introduced to 
the even-odd decomposition \cite{lewis1984computational, jin2000uniformly} of \eqref{eq:f_model}. 
 And a similar strategy is applied  to the micro-macro decomposition \cite{liu2004boltzmann} of \eqref{eq:f_model} in \cite{peng2019stability}. Taking the methods in \cite{peng2019stability} as an example,
 a weighted diffusive term $\omega \partial_{x}(\partial_x\rho/\sigma_s)$, 
 determined by the diffusion limit, is added to both sides of one equation in the micro-macro decomposed system. Then, a suitable implicit-explicit (IMEX) Runge-Kutta (RK) time discretization is applied to the newly reformulated system. 
 Under the adopted IMEX strategy, the two added diffusive terms 
 are treated differently.
 In space, local discontinuous Galerkin (LDG) method  \cite{cockburn1998local}  is applied. The resulting IMEX-LDG schemes are  AP and can be high order,
with the limiting schemes being implicit to solve the diffusion limit.  Based on Fourier analysis, stability condition is obtained in \cite{peng2019stability} by numerically solving an eigenvalue problem, and it confirms the unconditional stability in the diffusive regime. Following an energy approach, the stability condition is rigorously established  in \cite{peng2020analysis} for the first order in time scheme applied to the model with general material properties $\sigma_s(x)$ and $\sigma_a(x)$.

In a multiscale problem, $\sigma_s(x)$ may vary spatially, and the diffusion dominant and transport dominant subregions can coexist. Despite the success of enhancing the stability in the diffusive regime, the strategy in \cite{boscarino2013implicit,peng2019stability} with an  additional reformulation also faces some issues.
 First, the choice of the numerical weight is problem-dependent, and this ad-hoc choice has an influence on the performance of numerical simulations. Second,  when the scattering coefficient $\sigma_s(x)$ varies spatially, intuitively, a spatially dependent weight function $\omega$ seems to be preferred to better capture the multiscale behavior. However, with such a spatially dependent weight, an extra non-physical assumption would be needed to maintain local conservation property, see Section \ref{sec:model} for more discussions.

To overcome these issues and still accomplish unconditional stability in the diffusive regeime, we here design a new family of IMEX discontinuous Galerkin (DG) AP schemes  
based on the Schur complement \cite{zhang2006schur}, referred to  as IMEX-DG-S methods. Our new schemes directly solve the micro-macro decomposed system without any additional reformulation, and hence they do not suffer from issues mentioned above. 
In time, we apply  
globally stiffly accurate IMEX-RK time integrators \cite{boscarino2013implicit} with a new IMEX strategy. In space, we use DG discretizations \cite{cockburn2012discontinuous} with carefully chosen  numerical fluxes. In the velocity space, a discrete ordinates method \cite{pomraning1973equations}
is utilized.  On the solver level, the key is to apply the Schur complement to the fully discrete system, and this is important for the computational cost. Indeed our proposed methods  have comparable computational complexity as the IMEX-LDG schemes in \cite{peng2019stability}.
As $\vareps\rightarrow 0$, asymptotic analysis shows that our new schemes formally converge to high order methods that involve implicit RK methods in time and  LDG methods in space solving the diffusion limit, implying the AP property of the schemes. When an initial layer exists in the solution,  our schemes no longer need special treatment for the first step as in \cite{peng2019stability}  in order to stay AP.
With an energy type stability analysis applied to the first order scheme, and Fourier type stability analysis applied to the first to third order schemes, we obtain stability conditions that confirm  the uniform stability of the methods with respect to $\varepsilon$ and the unconditional stability in the diffusive regime. A discrete energy different from that in \cite{jang2014analysis, peng2020analysis}  is used in the energy analysis. 
 Numerical examples are presented to demonstrate the performance of the IMEX-DG-S schemes and its advantage over the IMEX-LDG schemes in some test cases.

The rest of the paper is organized as follows. In Section \ref{sec:model}, we present  the micro-macro decomposition, and briefly review the additional  reformulation strategy in 
\cite{peng2019stability}
 to motivate our work.  In Section \ref{sec:method}, we define our numerical schemes, and provide the details of the Schur complement for the final matrix-vector system. In Section \ref{sec:AP}, formal asymptotic analysis is presented to confirm the AP  property. In Section \ref{sec:stability}, energy and Fourier analyses are performed to obtain stability conditions. In Section \ref{sec:numerical}, numerical results are reported to illustrate the performance of the proposed schemes, and this will be followed by conclusions in  Section \ref{sec:conclusion}.

\section{Micro-macro decomposition and motivation }
\label{sec:model}

Following the micro-macro decomposition framework \cite{liu2004boltzmann,lemou2008new}, we reformulate \eqref{eq:f_model}. We first define a scattering operator $\mathcal{L} f=\lgl f \rgl - f$, and let $\Pi$ denote the $L^2$ projection onto the null space of $\mathcal{L}$: $\textrm{Null}(\mathcal{L})=\textrm{Span}\{1\}$. 
 We then decompose $f$ orthogonally into $f=\rho +\vareps g$, where $\rho:= \Pi f= \lgl f \rgl$, and  $g:=\frac{1}{\vareps}({\bf{I}}-\Pi)f$ satisfying $\lgl g\rgl=0$. Finally  we apply  $\Pi$ and its orthogonal complement ${\bf{I}}-\Pi$ to 
 \eqref{eq:f_model}, and obtain the micro-macro decomposed system:
\begin{subequations}
\label{eq:micro_macro}
\begin{align}
\label{eq:micro_macro_rho}
&\partial_t \rho + \partial_x \lgl vg \rgl = -\sigma_a \rho,\\
\label{eq:micro_macro_g}
\vareps& \partial_t g + ({\bf{I} }-\Pi)\partial_x (vg) + \frac{1}{\vareps}v\partial_x \rho = -\frac{\sigma_s}{\vareps} g -\vareps \sigma_a g.
\end{align}
\end{subequations}  
Assume $\sigma_s(x)>0$. As the Knudsen number $\vareps\rightarrow 0$, \eqref{eq:micro_macro} formally converges to its diffusion limit
\begin{align}
\label{eq:diffusion_limit}
\sigma_s g= - v\partial_x \rho, \qquad \partial_t \rho = -\partial_x \lgl vg\rgl-\sigma_a\rho= \lgl v^2\rgl \partial_{x}(\partial_x\rho/\sigma_s)-\sigma_a \rho.
\end{align}

To define AP schemes with unconditional stability in the diffusive regime with $\vareps\ll1$, \cite{peng2019stability} applies a second reformulation to \eqref{eq:micro_macro}
 by adding the weighted diffusive term $\lgl v^2\rgl \partial_{x}(\omega\partial_x\rho/\sigma_s)$ to both sides of \eqref{eq:micro_macro_rho}:
\begin{subequations}
\label{eq:micro_macro_reformulation}
\begin{align}
\label{eq:micro_macro_rho_reformulation}
&\partial_t \rho + \partial_x \lgl v\left(g +v\omega(\partial_x\rho/\sigma_s)\;\right)\rgl = \lgl v^2\rgl \partial_{x}(\omega\partial_x\rho/\sigma_s) -\sigma_a \rho,\\
\label{eq:micro_macro_g_reformulation}
\vareps& \partial_t g + ({\bf{I} }-\Pi)\partial_x (vg) + \frac{1}{\vareps}v\partial_x \rho = -\frac{\sigma_s}{\vareps} g -\vareps \sigma_a g.
\end{align}
\end{subequations}  
Here, $\omega$ is a non-negative numerical weight function, and it satisfies $\omega\rightarrow 1$ as $\vareps\rightarrow 0$.

In \cite{peng2019stability}, globally stiffly accurate IMEX-RK time discretizations are applied, where the weighted diffusive term $\lgl v^2\rgl \partial_{x}(\omega\partial_x\rho/\sigma_s)$ on the left hand side of \eqref{eq:micro_macro_rho_reformulation} is treated explicitly and that on the right hand side is treated implicitly.  With LDG methods further applied in space, the resulting schemes  are AP,
unconditionally stable in the diffusive regime, and they also show good performance numerically. However, the choice of $\omega$ is problem-dependent, and it can affect the performance of the methods (see. e.g. Examples 2, 4, 5  in  Section \ref{sec:numerical}).
Moreover, when $\sigma_s(x)$ is not constant,  a spatially dependent weight $\omega$ would  be preferred intuitively  in order to better capture the multiscale behavior. If such weight function is used, one would need to assume $\omega\partial_x \rho/\sigma_s$ to be continuous to ensure the local conservation, and this is apparently not physical due to the weight-dependence. 
As far as we know, only weight functions that do not vary spatially have been considered in the literature. 

\section{Numerical methods}
\label{sec:method}

In this section, we will design a new family of AP schemes directly based on the micro-macro decomposition \eqref{eq:micro_macro}, aiming at achieving unconditional stability in the diffusive regime without the need for a weight function.  In the following subsections, we will present discretizations in time, in space, and in velocity. For the fully discrete schemes, Schur complement will be applied to their   algebraic systems. We end this section by extending  the IMEX strategy to some more general  kinetic transport models. Throughout this section,  periodic boundary conditions are assumed in space.


\subsection{Time discretization}
\label{sec:time}
In time, we apply globally stiffly accurate IMEX-RK methods of type ARS. The first order one is defined as follows. 
 Given $\rho^n$ and $g^n$ at $t=t^n$, we seek $\rho^{n+1}$ and $g^{n+1}$ at $t^{n+1}=t^n+\Dt$, which satisfy
\begin{subequations}
\label{eq:time}
\begin{align}
\label{eq:time_rho}
&\frac{\rho^{n+1}-\rho^n}{\Dt} + \partial_x\lgl vg^{n+1}\rgl = -\sigma_a \rho^{n+1},\\
\label{eq:time_g}
&\frac{g^{n+1}-g^n}{\Dt} + \frac{1}{\vareps}({\bf{I}}-\Pi)(v\partial_x g^n)+\frac{1}{\vareps^2}v\partial_x \rho^{n+1} = -\frac{1}{\vareps^2}\sigma_s g^{n+1}-\sigma_a g^{n+1}.
\end{align}
\end{subequations}
The same IMEX strategy in \eqref{eq:time} will be also used to achieve high order temporal accuracy.  Recall that a general IMEX-RK scheme can be  
represented by a double Butcher tableau:
\beq
\label{eq: B_table}
\begin{array}{c|c}
\tilde{c} & \tilde{\mA}\\
\hline
 & {\tilde{b}}^T \end{array} \ \ \ \ \
\begin{array}{c|c}
{c} & {\mA}\\
\hline
 & {b^T} \end{array}~~.
\eeq
Here, $\tilde{\mA}=(\tilde{a}_{ij})$, $\mA=(a_{ij})$ are $s\times s$ lower triangular matrices, and $\tilde{a}_{ii}=0$, $i=1, \cdots s$. $\tilde{b}=(\tilde{b}_i)$, $b=(b_i)$, $\tilde{c}=(\tilde{c}_i),$ and $c=(c_i)$ are $s$-dimensional vectors, and $\tilde{c}_i=\sum_{j=1}^{i-1}\tilde{a}_{ij}$, $c_i=\sum_{j=1}^i a_{ij}$. 
An IMEX-RK scheme is {\it{globally stiffly accurate}} \cite{boscarino2013implicit} if 
$$ c_s = \tilde{c}_s=1,\;\text{and}\; b_j= a_{sj},\; \tilde{b}_j=\tilde{a}_{sj},\; \forall j=1,\dots,s,$$ and it is of type ARS \cite{ascher1997implicit} if
\begin{equation}
\label{eq:A:assume}
\mA=\left[
\begin{array}{cc}
0 & 0\\
0 & \hat{\mA}
\end{array}
\right],\quad \textrm{where} \; \hat{\mA}\; \textrm{is invertible}.
\end{equation}
For the second and third order accuracy, we use ARS(2,2,2) and  ARS(4,4,3), respectively \cite{ascher1997implicit,boscarino2013implicit}.

\subsection{Space discretization}
\label{sec:space}

Let $\Omega_x=[x_L,x_R]$ be the computational domain, and $\Omega_h = \left\{ I_i=[x_{i-\frac{1}{2}},x_{i+\frac{1}{2}}],i=1,\dots, N\right\}$ be a partition of $\Omega_x$. Let $x_i=({x_{i-\frac{1}{2} }+x_{i+\frac{1}{2}} })/{2}$, $h_i = x_{i+\frac{1}{2} }-x_{i-\frac{1}{2}}$, $h=\max_i{h_i}$. Given a nonnegative integer $k$, define the discrete space
$U_h^k=\{u\in L^2(\Omega_x): u|_{I_i}\in P^k(I_i),\forall 1\leq i\leq N\} $, where $P^k(I_i)$ denotes the space of polynomials with degree at most $k$ on $I_i$. Define $u^{\pm}_{i+\frac{1}{2}}=\lim_{\Delta x\rightarrow 0^{\pm}} u(x_{i+\frac{1}{2} }+\Dx)$ and the jump  $[u]_{i+\frac{1}{2}}=u^+_{i+\frac{1}{2} }-u^-_{i-\frac{1}{2} }$, $\forall i$.

We apply the following DG spatial discretization to the semi-discrete method in \eqref{eq:time}. Given numerical solutions $\rho_h^n$ and $g_h^n\in U_h^k$, we look for $\rho_h^{n+1}, g_h^{n+1}\in U_h^{k}$ satisfying, $\forall \testR,\testG \in U_h^k$
\begin{subequations}
\label{eq:space}
\begin{align}
\label{eq:space_rho}
&(\frac{\rho_h^{n+1}-\rho_h^n}{\Dt},\testR) + l_h\left(\lgl vg_h^{n+1}\rgl,\testR\right) = -\left(\sigma_a \rho_h^{n+1},\testR\right),\\
\label{eq:space_g}
&(\frac{g_h^{n+1}-g_h^n}{\Dt},\testG) + \frac{1}{\vareps}\widetilde{b}_{h,v}(g_h^n,\testG)-\frac{1}{\vareps^2}vd_h(\rho_h^{n+1},\testG) = -\frac{1}{\vareps^2}(\sigma_s g_h^{n+1},\testG)-(\sigma_a g_h^{n+1},\testG).
\end{align}
\end{subequations}
Here, $(\cdot,\cdot)$ is the standard $L^2$ inner product of $L^2(\Omega_x)$. Bilinear forms $d_h(\cdot,\cdot)$, $l_h(\cdot,\cdot)$,  $\widetilde{b}_{h,v}(\cdot,\cdot)$ are defined as
\begin{subequations}
\label{eq:bilinear}
\begin{align}
\label{eq:dh}
d_h(\rho_h, \testG)&=\sum_i\int_{I_i} \rho_h \df_x\testG dx + \sum_i (\breve{\rho_h})_{\iL}[\testG]_\iL , \\
\label{eq:lh}
l_h(\lgl vg_h\rgl,\testR)&=-\sum_i \int_{I_i} \lgl vg_h\rgl \partial_x\testR dx-\sum_i \widehat{\lgl vg_h\rgl}_\iL [\testR]_\iL,\\
\label{eq:bh}
\widetilde{b}_{h,v}(g_h,\testG)&=((\bI-\Pi)\mD^{up}_h(g_h; v), \testG) =(\mD^{up}_h(g_h; v) - \langle\mD^{up}_h(g_h; v)\rangle, \testG),
\end{align}
\end{subequations}
where $\mD^{up}_h(g_h;v)\in U_h^k$ is an upwind approximation to $v\partial_x g$ for any given velocity $v$:
\begin{align}
(\mD^{up}_h(g_h; v), \testG)
=-\sum_i\left(\int_{I_i} vg_h \df_x\testG dx\right) - \sum_i \widetilde{(vg_h)}_\iL[\testG]_\iL, \quad \forall \psi\in U_h^k.
\label{eq:mD}
\end{align}
$(\widetilde{vg_h})_\iL$, $\widehat{\lgl vg_h\rgl}_\iL$ and $(\breve{\rho_h})_\iL$ are numerical fluxes and chosen as:
\begin{subequations}
\label{eq:flux}
\begin{align}
\label{eq_upwind}
&\textrm{upwind:}\quad(\widetilde{vg_h})_\iL:=
\left\{
\begin{array}{ll}
({vg_h})_\iL^-,&\mbox{if}\; v\geq0,\\
({vg_h})_\iL^+,&\mbox{if}\; v<0,
\end{array}
\right.
\;\;\\
&\textrm{alternating:}\quad
(\breve{\rho_h})_\iL=(\rho_h)_\iL^-,\;\;\widehat{\lgl vg_h\rgl}_\iL = {\lgl vg_h\rgl}_\iL^+.
\end{align}
\end{subequations}
Based on the Riesz representation, we can further find two well-defined bounded linear operators 
$\mD_h^-,\mD_h^+: U_h^k\rightarrow U_h^k$ such that
$$ (\mD_h^-\testR,\testG) = -d_h(\testR,\testG),\;\; (\mD_h^+\testR,\testG) = l_h(\testR,\testG),\;\;\forall \testR,\testG\in U_h^k.$$
$\mD_h^{\pm}$ can be seen as discrete derivative operators, and the scheme \eqref{eq:space} can be rewritten as:
\begin{subequations}
\label{eq:space_derivative}
\begin{align}
\label{eq:space_rho_d}
&\frac{\rho_h^{n+1}-\rho_h^n}{\Dt} + \mD_h^+\lgl vg_h^{n+1}\rgl = -\pi_h(\sigma_a \rho_h^{n+1}),\\
\label{eq:space_g_d}
&\frac{g_h^{n+1}-g_h^n}{\Dt} + \frac{1}{\vareps}(\bI-\Pi)\mD^{up}_h(g^n_h; v)+\frac{v}{\vareps^2}\mD_h^-\rho_h^{n+1} = -\frac{1}{\vareps^2}\pi_h(\sigma_s g_h^{n+1})-\pi_h(\sigma_a g_h^{n+1}),.
\end{align}
\end{subequations}
with $\pi_h$ being the $L^2$ projection onto $U_h^k$.

The DG spatial discretization can be coupled directly with  high order IMEX-RK time integrators. At $t=0$,  $\rho_h^0$ and $g_h^0$ are initialized by $L^2$ projection, namely, $\rho_h^0=\pi_h(\rho(x,0))$ and  $g_h^0=\pi_h(g(x,v,0))$.  The following lemma summarizes a property of the bilinear forms $l_h$ and $d_h$, and it is  important in stability analysis and can be easily verified. 

\begin{lem}\label{lem:adjoint_relation}
With periodic boundary conditions in space, there hold
\begin{align}
l_h(\testR,\testG) = d_h(\testG,\testR),\;\; \forall \testR,\testG\in U_h^k, \quad\text{and}\quad\mD_h^+=-(\mD_h^-)^\top,
\end{align}
where the superscript $^\top$ to an operator denotes its adjoint.
\end{lem}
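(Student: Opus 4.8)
The plan is to reduce the whole statement to the single bilinear-form identity $l_h(\phi,\psi)=d_h(\psi,\phi)$; once this is in hand, the operator relation $\mD_h^+=-(\mD_h^-)^\top$ follows by pure algebra from the Riesz definitions $(\mD_h^-\phi,\psi)=-d_h(\phi,\psi)$ and $(\mD_h^+\phi,\psi)=l_h(\phi,\psi)$. First I would write both forms for arbitrary arguments $u,w\in U_h^k$, inserting the chosen fluxes: $d_h(u,w)=\sum_i\int_{I_i}u\,\df_x w\,dx+\sum_i u^-_\iL[w]_\iL$ and $l_h(u,w)=-\sum_i\int_{I_i}u\,\df_x w\,dx-\sum_i u^+_\iL[w]_\iL$, the only difference in the volume terms being a global sign, and in the interface terms being the \emph{complementary} one-sided traces dictated by the alternating fluxes in \eqref{eq:flux}.

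The heart of the argument is a cellwise integration by parts. On each $I_i$ one has $\int_{I_i}\phi\,\df_x\psi\,dx=(\phi\psi)^-_\iR-(\phi\psi)^+_\iL-\int_{I_i}\psi\,\df_x\phi\,dx$, where $\pm$ denote the one-sided traces at the cell endpoints. Summing over $i$ and using periodicity to identify the interface $x_{N+\frac12}$ with $x_{\frac12}$, the telescoping boundary sum becomes a sum over interfaces of $(\phi\psi)^--(\phi\psi)^+=\phi^-\psi^--\phi^+\psi^+$. Substituting this into $l_h(\phi,\psi)$ converts its volume part into $+\int\psi\,\df_x\phi$ plus interface terms, so that the volume integrals in $l_h(\phi,\psi)-d_h(\psi,\phi)$ cancel outright and the difference reduces to a sum over interfaces of $-(\phi^-\psi^--\phi^+\psi^+)-\phi^+[\psi]-\psi^-[\phi]$. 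Expanding the jumps $[\psi]=\psi^+-\psi^-$ and $[\phi]=\phi^+-\phi^-$, one checks that the three products cancel term by term, leaving $0$ at each interface. This gives $l_h(\phi,\psi)=d_h(\psi,\phi)$.

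With the bilinear identity established, the operator relation is immediate: for all $\phi,\psi\in U_h^k$, $((\mD_h^-)^\top\phi,\psi)=(\phi,\mD_h^-\psi)=(\mD_h^-\psi,\phi)=-d_h(\psi,\phi)=-l_h(\phi,\psi)=-(\mD_h^+\phi,\psi)$, whence $\mD_h^+=-(\mD_h^-)^\top$ by non-degeneracy of the $L^2$ inner product on $U_h^k$. I expect the only delicate point to be the interface bookkeeping: the cancellation works precisely because $d_h$ employs the left trace $\rho^-$ while $l_h$ employs the right trace $\langle vg\rangle^+$, i.e. the two alternating fluxes sit on opposite sides, and because periodicity leaves no stray boundary contribution from the telescoping sum. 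With non-periodic boundaries one would have to carry the endpoint terms explicitly, and the clean adjoint relation would in general fail.
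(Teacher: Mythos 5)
Your proof is correct. The paper offers no proof of this lemma (it is stated as something that "can be easily verified"), and your argument — cellwise integration by parts, telescoping of the boundary terms under periodicity, and the term-by-term interface cancellation $-(\phi^-\psi^--\phi^+\psi^+)-\phi^+[\psi]-\psi^-[\phi]=0$ enabled by the alternating choice of traces in the fluxes, followed by the Riesz/adjoint algebra for $\mD_h^+=-(\mD_h^-)^\top$ — is exactly the standard verification the authors intend.
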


\subsection{Velocity discretization}
\label{sec:velocity}

In velocity variable, we will apply the discrete ordinates method \cite{pomraning1973equations}. 
Let $\{v_l\}_{l=1}^{N_v}$ denote a set of quadrature points as collocation points in the velocity space $\Omega_v$ and $\{\omega_l\}_{l=1}^{N_v}$ be the corresponding quadrature weights. An integral in velocity will be 
approximated by 
\begin{align}
\lgl \eta(v) \rgl = \int_{\Omega_v} \eta(v) d\nu\approx \sum_{l=1}^{N_v}\omega_l \eta(v_l)\triangleq:\lgl \eta(v)\rgl_h,\;
\end{align} 
Particularly, we choose $\{\omega_l\}_{l=1}^{N_v}$ and $\{v_l\}_{l=1}^{N_v}$  satisfying 
\begin{align}
\label{eq:v_square_integral}
\lgl v^2 \rgl = \lgl v^2\rgl_h. 
\end{align}
 This requirement is essential for our fully discrete schemes to capture the correct diffusion limit as $\vareps\rightarrow 0$.

\subsection{Fully  discrete schemes}
\label{sec:fully}

By combining the temporal, spatial, and velocity discretizations described above, we are now ready to present the fully discrete schemes:
 Given $\rho_h^n\in U_h^k$, $\{g_{h,l}^n\}_{l=1}^{N_v}\in U_h^k$, we look for $\rho_h^{n+1}\in U_h^k$, $\{g_{h,l}^{n+1}\}_{l=1}^{N_v}\in U_h^k$, satisfying for $i=1,\dots, s$, $l=1, \dots, N_v$ 
\begin{subequations}
\label{eq:full_high}
\begin{align}
(\rho_h^{n,(i)},\testR) &= (\rho_h^n,\testR) -\Dt \sum_{j=1}^i a_{ij}\left( (\mD_h^+\lgl vg_h^{n,(j)}\rgl_h,\testR)+(\sigma_a\rho_h^{n,(j)},\testR)\right), \forall \testR \in U_h^k \\
\vareps^2(g_{h,l}^{n,(i)},\testG) &= \vareps^2(g_{h,l}^n,\testG) -\vareps\Dt \sum_{j=1}^{i-1} \tilde{a}_{ij}\left( 
(\mD^{up}_h(g^{n,(j)}_{h}; v_l), \testG)
-(\lgl\mD^{up}_h(g^{n,(j)}_{h};v)\rgl_h,\testG) \right)\notag\\
				 		-&\Dt\sum_{j=1}^i a_{ij}\left( v_l(\mD_h^- \rho_h^{n,(j)},\testG)+(\sigma_s g_{h,l}^{n,(j)},\testG)+\vareps^2(\sigma_a g_{h,l}^{n,(j)},\testG)\right), \forall\testG\in U_h^k \\
\rho_h^{n+1} &= \rho_h^{n,(s)},\quad g_{h,l}^{n+1} = g_{h,l}^{n,(s)}. \label{eq:full_high_last_stage}
\end{align}
\end{subequations}
We here have used  $g_{h,l}^n:=g_h^n(\cdot,v_l)$, $g_{h,l}^{n,(i)}:=g_h^{n,(i)}(\cdot,v_l)$, and 
$\lgl \mathcal{G}(g_h^{n,(j)})\rgl_h =\sum_{l=1}^{N_v}\omega_l \mathcal{G}(g_{h,l}^{n,(j)})$ with $\mathcal{G}: L^2(\Omega_v)\rightarrow L^2(\Omega_v)$. And the intermediate functions $\rho_h^{n,(i)}, g_{h,l}^{n,(i)}$ are also in $U_h^k$.
 
Particularly, the first order in time scheme is: $\forall \testR,\testG\in U_h^k$,  
$l=1,\dots,N_v$,
\begin{subequations}
\label{eq:full_first}
\begin{align}
\label{eq:full_first_rho}
&(\frac{\rho_h^{n+1}-\rho_h^n}{\Dt},\testR) + l_h\left(\lgl vg_h^{n+1}\rgl_h,\testR\right) = -\left(\sigma_a \rho_h^{n+1},\testR\right),\\
\label{eq:full_first_g}
&(\frac{g_{h,l}^{n+1}-g_{h,l}^n}{\Dt},\testG) + \frac{1}{\vareps}b_{h,v}(g_{h,l}^n,\testG)-\frac{1}{\vareps^2}vd_h(\rho_h^{n+1},\testG) = -\frac{1}{\vareps^2}(\sigma_s g_{h,l}^{n+1},\testG)-(\sigma_a g_{h,l}^{n+1},\testG),
\end{align}
\end{subequations}
where
\begin{align}
b_{h,v}(g_{h,l}^n,\testG) = (\mD^{up}_h(g^{n}_{h}; v_l), \testG) -(\lgl\mD^{up}_h(g^{n}_{h};v)\rgl_h,\testG).
\end{align}

From here on, we will use IMEX$p$-DG-S to refer to the fully discrete scheme with $p$-th order IMEX-RK time integrator, and use IMEX$p$-DG$k$-S with the discrete space $U_h^{k-1}$ in the spatial discretization. Here S stands for the Schur complement, which will be discussed in next subsection. Finally one can obtain the following property of the numerical solution following a similar proof of Lemma 3.1 in \cite{jang2014analysis},
\begin{equation}
\lgl g_h^{n}\rgl_h=0, \quad \forall n\geq 0.
\label{eq:g:prop}
\end{equation}

\subsection{Matrix-vector formulation and Schur complement}
\label{sec:matrix-vector}

To implement the proposed schemes, we will further apply Schur complement at the algebraic level. With this, our methods will have comparable computational complexity as the IMEX-LDG schemes in  \cite{peng2019stability,peng2020analysis}.  Next we use  the first order in time IMEX1-DG-S scheme to illustrate. Similar discussion can go to the high order in time schemes.


We start with the matrix-vector formulation of the IMEX1-DG-S scheme  \eqref{eq:full_first}. Let $\{e_l(x) \}_{l=1}^m$ be a basis of the discrete space $U_h^k$. Define ${\bf{e}}=(e_1(x),\dots,e_m(x) )^T$. Then the numerical solutions can be expanded as
$$\rho_h^n(x)=\sum_{i=1}^m \rho_i^n e_i(x)=({\boldsymbol{\rho} }^n)^T{\bf{e}},\;\; g_{h,j}^n(x)=\sum_{i=1}^m g^n_{j,i} e_i(x)=({\bf{g}}^n_j)^T{\bf{e}},$$
where ${\boldsymbol{\rho} }^n=(\rho_1^n,\dots,\rho_m^n)^T$ and ${\bf{g}}^n_j=(g_{j,1}^n,\dots,g_{j,m}^n)^T$.

Define the mass matrix $(M)_{ij}=(e_j,e_i)$ and stiff matrices $(D^+)_{ij} = (\mD_h^+ e_j,e_i)$, $(D^-)_{ij}=(\mD_h^- e_j,e_i)$. Also define $(\Sigma_s)_{ij}=(\sigma_s e_j,e_i)$ and $(\Sigma_a)_{ij}=(\sigma_a e_j, e_i)$. The fully discrete IMEX1-DG-S scheme \eqref{eq:full_first} can be written into its matrix-vector form:
\begin{subequations}
\label{eq:matrix-vector}
\begin{align}
\mathcal{L}\left({\boldsymbol{\rho} }^{n+1},{\bf{g}}_1^{n+1},{\bf{g}}_2^{n+1},\dots{\bf{g}}_{N_v}^{n+1}\right)^T
=\left({\bf{b}}_0^n,{\bf{b}}_1^n,{\bf{b}}_2^n,\dots,{\bf{b}}_{N_v}^n\right)^T,
\quad\text{and,}\\[0.15\baselineskip]
\mathcal{L}=
\left(
\begin{matrix}
M + \Dt\Sigma_a	  	&\Dt \omega_1 v_1 D^+ 		& \Dt \omega_2 v_2 D^+ 		&\dots 	& \Dt\omega_{N_v}v_{N_v}D^+\\
v_1 \Dt D^- 			& \Theta	& 0 						&\dots 	& 0 \\
v_2 \Dt D^- 			& 0						& \Theta	&\dots 	& 0 \\
\vdots	  			& \vdots					& \vdots 					&\ddots	&\vdots\\
v_{N_v}\Dt D^-			& 0						& 0 						&\dots  	&\Theta
\end{matrix}
\right).
\end{align}
\end{subequations}
Here $\Theta = \vareps^2 (M +\Dt\Sigma_a)+ \Dt\Sigma_s$, and  ${\bf{b}}_j^n$, $\forall j=0,\dots, N_v$, are vectors determined by the data on time level $n$. Given that the mass matrix $M$ is symmetric positive definite (SPD) and $\sigma_s\geq 0$, $\sigma_a\geq 0$, $\Theta$ is SPD hence invertible.  Following the standard procedure of the Schur complement \cite{zhang2006schur}, we first express $ {\bf{g}}_j^{n+1}$ in terms of ${\bf{b}}_j^n$ and ${\boldsymbol{\rho}}^{n+1}$, namely,
\begin{align}
\label{eq:schur_step1}
{\bf{g}}_j^{n+1} = \Theta^{-1}\left( {\bf{b}}_j^n - v_j \Dt D^-{\boldsymbol{\rho}}^{n+1}\right), \quad\forall j=1,\dots, N_v.
\end{align}
With the local nature of the DG discrete space $U_h^k$, its basis functions can be chosen such that $M$, $\Sigma_s$ and $\Sigma_a$ are block diagonal. As a result, $\Theta$ can be inverted locally on each element, with an $O(N)$ total cost. 

Next substitute \eqref{eq:schur_step1} into the first row of \eqref{eq:matrix-vector} and utilize $\lgl v^2\rgl = \lgl v^2\rgl_h=\sum_j \omega_j v_j^2$, one obtains
\begin{align}
\mathcal{H}{\boldsymbol{\rho}}^{n+1} = \widetilde{{\bf{b}}}_0^n,
\end{align}
with 
\begin{align}
\mathcal{H}=M+\Dt\Sigma_a - \lgl v^2\rgl \Dt^2 D^+\Theta^{-1}D^-,
\end{align}
and $\widetilde{{\bf{b}}}_0^n$ depends on the solution  on time level $n$. 
For each time step,  we need to invert  $\mathcal{H}$. Based on Lemma \ref{lem:adjoint_relation}, 
$-\mD_h^+$ is the adjoint operator of $\mD_h^-$. This leads to $-D^+=(D^-)^T$, therefore $\mathcal{H}$ is SPD. Indeed $\mathcal{H}$ is a discrete version of $1+\Dt\sigma_a-\lgl v^2\rgl\Dt^2\partial_x\Big((\vareps^2(1+\Dt\sigma_a)+\Dt\sigma_s)^{-1}\partial_x\Big)$, a diffusive operator with the absorption effect.
With the nice property such as being SPD, $\mathcal{H}$ is much easier to invert numerically than the matrix $\mathcal{L}$ in \eqref{eq:matrix-vector}.

For high order IMEX-RK schemes, the Schur complement can be applied similarly. 
On each inner stage, a discrete diffusive operator with the absorption effect needs to be inverted. 
Particularly, in the double Butcher tableaus of either ARS(2,2,2) or ARS(4,4,3), the diagonal entries of the matrix from the implicit part are exactly the same. Hence, for each time step, exactly the same matrix is inverted (numerically) for all inner stages. 

\begin{rem} With a similar derivation, one can show  that the IMEX1-LDG scheme in \cite{peng2019stability, peng2020analysis} needs to invert  $\widetilde{\mathcal{H}}=M+\Dt\Sigma_a - \omega\lgl v^2\rgl \Dt D^+\Sigma_s^{-1}D^-$ for each step, where $\omega\rightarrow1$ as $\varepsilon\rightarrow 0$. With both $\Sigma_s$ and $\Theta$ being block diagonal, the  computational cost of the IMEX1-DG-S scheme  is comparable with that of IMEX1-LDG schemes in \cite{peng2019stability, peng2020analysis}. The same comment also goes to higher order methods in both families. Note that as $\vareps\rightarrow 0$,   $\widetilde{\mathcal{H}}$ and ${\mathcal {H}}$ approach the same operator.
\end{rem}
\begin{rem}
For the discretization of  the velocity space, one can alternatively apply the $P_N$ method \cite{pomraning1973equations}, which expands $f$ in terms of  orthogonal polynomials in the velocity variable $v$. 
If applying $P_N$ method as well as our spatial and temporal  discretizations,  based on the Schur complement, one still just needs to invert one discrete diffusive operator for one inner RK stage. The key to verify this is to use the commuting  property $(\lgl \testG(v,x)\rgl,\testR(x) )=\lgl (\testG(v,x),\testR(x) )\rgl$. The schemes with the $P_N$ method in velocity  are not explored here.
\end{rem}

\subsection{More general linear kinetic transport equations}
\label{sec:g:model}

Though not considered in this paper, we want to point out that our temporal strategy works for more general linear kinetic transport equations, for example, the case when the scattering effect is anisotropic in the velocity space.  Consider a more general linear kinetic transport equation: 
\begin{align}
\vareps \partial_t f+v\partial_x f = \frac{1}{\vareps}\mathcal{Q}f,
\label{eq:general-model}
\end{align}
where $\mathcal{Q}$ is a collision operator. 
As in \cite{lemou2008new}, we assume that there exists an equilibrium state $E$ independent of $t$ and $x$ satisfying $E\geq0$, $\lgl E\rgl=1$ and $\lgl vE\rgl=0$. The collision operator $\mathcal{Q}$ satisfies the following:
\begin{enumerate}
\item $\mathcal{Q}$ is a linear operator in the velocity space,  independent of $f$, and local in $x$;
\item $\mathcal{Q}$ is non-positive self-adjoint;
\item $\textrm{Null}(\mathcal{Q})=\textrm{Span}\{E\}=\{f=\rho E=\lgl f\rgl E\}$.
\end{enumerate} 

Following \cite{lemou2008new}, we apply micro-macro decomposition.
Define an orthogonal projection $\Pi: L^2(\Omega_v;E^{-1}dv)\rightarrow \textrm{Null}(\mathcal{Q})$, that is $\Pi f=\rho E$. Rewrite $f$ as $f=\Pi f+({\bf{I}}-\Pi)f=\rho E+\vareps g$. The micro-macro decomposed system of \eqref{eq:general-model} is
\begin{subequations}
\label{eq:micro-macro-general}
\begin{align}
&\partial_t \rho+\partial_x\lgl vg\rgl =0,
\label{eq:micro-macro-general1}
\\
&\partial_t g+\frac{1}{\vareps}({\bf{I} }-\Pi)(v\partial_xg)+\frac{1}{\vareps^2}vE\partial_x\rho=\frac{1}{\vareps^2}\mathcal{Q}g.
\label{eq:micro-macro-general2}
\end{align}
\end{subequations}
Under the assumption on $\mathcal {Q}$, as $\vareps\rightarrow 0$ we formally obtain the diffusion limit:
\begin{subequations}
\label{eq:general-diffusion-limit}
\begin{align}
&g = \mathcal{Q}^{-1}(vE)\partial_x \rho,
\label{eq:general-diffusion-limit1}\\
&\partial_t\rho + \partial_x (\lgl v \mathcal{Q}^{-1}(vE)\rgl\partial_x\rho)=0.
\label{eq:general-diffusion-limit2}
\end{align}
\end{subequations}
Apply the same time discretization as \eqref{eq:time}, we have
\begin{subequations}
\label{eq:time-general}
\begin{align}
\label{eq:time-rho-general}
&\frac{\rho^{n+1}-\rho^n}{\Dt} + \partial_x\lgl vg^{n+1}\rgl = 0,\\
\label{eq:time-g-general}
&\frac{g^{n+1}-g^n}{\Dt} + \frac{1}{\vareps}({\bf{I}}-\Pi)(v\partial_x g^n)+\frac{1}{\vareps^2}vE\partial_x \rho^{n+1} = \frac{1}{\vareps^2}\mathcal{Q} g^{n+1}.
\end{align}
\end{subequations}
The spatial derivatives can be further replaced by discrete derivatives as in 
Section \ref{sec:space}.
 On the solver level,  we apply  Schur complement as below. 
At each time step, we first express 
\begin{equation}
g^{n+1}= (\vareps^2 -\Dt\mathcal{Q})^{-1}\left(-vE\Dt\partial_x\rho^{n+1}+b^n\right),
\label{eq:g:S:a}
\end{equation}
 where $b^n$ is determined by the data on time level $n$.  
 We then  substitute \eqref{eq:g:S:a} into \eqref{eq:time-rho-general}, and obtain $\big(1-\Dt\partial_{x}(\kappa_{\Dt}(x)\partial_x)\big)\rho^{n+1}=\widetilde{b}^n$, where 
$\kappa_{\Dt}(x)=\Dt\lgl v(\vareps^2 -\Dt\mathcal{Q})^{-1}(vE)\rgl$ and $\widetilde{b}^n$ depends on the solution  on time level $n$. To obtain $\rho^{n+1}$,  a  discrete  diffusion
 operator is inverted.  If $\Dt$ is fixed in time, $\kappa_{\Dt}(x)$ can be pre-computed locally.

\section{AP property}\label{sec:AP}
We formally analyze the asymptotic behavior of the proposed schemes in \eqref{eq:full_high} and show they are AP.  Assume the initial data $\rho(x,0)$ and $g(x,v,0)$ are uniformly bounded with respect to $\vareps$. Then, the initialization through $L^2$ projection leads to uniform boundedness of  $\rho_h^0$ and $g_h^0$. Using mathematical induction and boundedness of the discrete operator $\mD_h^{\pm}$ and $\mD_h^{up}$, we formally obtain that as $\vareps\rightarrow 0$, $\forall \testR, \testG\in U_h^k$, $\forall n\geq 0$,
\begin{subequations}
\label{eq:limit_scheme}
\begin{align}
&(\rho_h^{n,(i)},\testR)= (\rho_h^n,\testR) - \Dt \sum_{j=1}^i a_{ij}\left((\mD_h^+\lgl vg^{n,(j)}_h\rgl_h,\testR)+(\sigma_a\rho_h^{n,(j)},\testR)\right),\;\;i=1,\dots,s,\\
&(\sigma_s g_{h,l}^{n,(i)},\testG) = -v_l(\mD_h^- \rho_h^{n,(i)},\testG),\;\; l=1,\dots,N_v, \;\;i=1,\dots,s,\label{eq:limit_scheme_g}\\
&\rho_h^{n+1} = \rho_h^{n,(s)},\quad\;\; g_{h,l}^{n+1} = g_{h,l}^{n,(s)},\;\; l=1,\dots,N_v.
\end{align}
\end{subequations}

Multiply $\omega_l v_l$ on both sides of \eqref{eq:limit_scheme_g} and sum up with respect to $l$, we get 
\begin{align}
(\sigma_s\lgl vg^{n,(i)}_{h}\rgl_h,\testR) &= \sum_{l=1}^{N_v}\omega_lv_l(\sigma_s g_{h,l}^{n,(i)},\testR)
=-\sum_{l=1}^{N_v}(\omega_l v_l^2 \mD_h^- \rho_h^{n,(i)},\testG)
\notag\\
& = -\lgl v^2\rgl_h( \mD_h^- \rho_h^{n,(i)},\testG)= -\lgl v^2\rgl( \mD_h^- \rho_h^{n,(i)},\testG).\label{eq:limit_scheme_quadrature}
\end{align}
Substitute \eqref{eq:limit_scheme_quadrature} into \eqref{eq:limit_scheme}, then the limiting scheme can be rewritten as:  $\forall \testR, \testG\in U_h^k$, $\forall n\geq0$
\begin{subequations}
\label{eq:limit_scheme_post}
\begin{align}
&(\rho_h^{n,(i)},\testR)= (\rho_h^n,\testR) - \Dt \sum_{j=1}^i a_{ij}\left((\mD_h^+\lgl vg^{n,(j)}_h\rgl_h,\testR)+(\sigma_a\rho_h^{n,(j)},\testR)\right),\;\;i=1,\dots,s,\label{eq:limit_scheme_post1}\\
&( \sigma_s\lgl vg^{n,(i)}_{h}\rgl_h,\testG)  =  -\lgl v^2\rgl( \mD_h^- \rho_h^{n,(i)},\testG),\;\;i=1,\dots,s,\label{eq:limit_scheme_post2}\\
&(\sigma_s g_{h}^{n,(i)},\testG) = -v_l(\mD_h^- \rho_h^{n,(i)},\testG),\;\; l=1,\dots,N_v, \;\;i=1,\dots,s,\label{eq:limit_scheme_post3}\\
&\rho_h^{n+1} = \rho_h^{n,(s)},\quad g_{h,l}^{n+1} = g_{h,l}^{n,(s)},\;\; l=1,\dots,N_v.\label{eq:limit_scheme_post4}
\end{align}
\end{subequations}
In \eqref{eq:limit_scheme_post1} and\eqref{eq:limit_scheme_post2}, $\lgl vg_h^{n,(i)}\rgl_h$ actually provides an approximation to
 $\lgl v^2\rgl \sigma_s^{-1}(x)\partial_x \rho$.
Hence, \eqref{eq:limit_scheme_post1}, \eqref{eq:limit_scheme_post2} and \eqref{eq:limit_scheme_post4} define a high order  implicit RK LDG  
scheme solving the diffusion limit \eqref{eq:diffusion_limit}, whose time discretization is determined by the implicit part of the IMEX-RK scheme.
Moreover, in \eqref{eq:limit_scheme_post3}, the local equilibrium 
$\sigma_s g=-v\partial_x\rho$ is preserved on the discrete level at each RK inner stage. Therefore, we formally verify the AP property of the proposed schemes.

\begin{rem}
 Though our analysis above does not require the initial data $f(x,v,0)=\rho(x,0)+\vareps g(x,v,0)$ to be close to the local equilibrium $\sigma_s g(x,v,0)=-v\partial_x \rho(x,0)$, it does not cover the worst scenario $g(x,v,0)=\frac{1}{\vareps}\left(f(x,v,0)-\rho(x,0) \right)= O(\frac{1}{\vareps})$. For this case, with formal analysis similar to \cite{peng2019stability}, one can show that the limiting scheme is an $O(\Dt)$ perturbation to \eqref{eq:limit_scheme}, regardless the temporal accuracy. Hence, the limiting scheme is a first order in time scheme solving the diffusion limit. This implies that our schemes stay  AP, and indeed they are  {\it{strong AP}} \cite{jin2010asymptotic}.  When the temporal accuracy is higher than one, with the $O(\Dt)$ perturbation, our AP schemes  suffer from order reduction for the case of $g(x,v,0)=O(\frac{1}{\vareps})$. To recover the full $p$-th order temporal accuracy as designed, one can adopt the strategy proposed in \cite{peng2019stability} and alter the first time step size into $\Dt_1=\Dt^p$,  where $\Dt$ is the time step size for later steps, predicted by stability analysis.
\end{rem}
\section{Stability}
\label{sec:stability}
In this section, numerical stability analysis will be carried out. An energy approach will be applied to the first order IMEX1-DG1-S scheme in Section \ref{sec:energy}, and Fourier analysis will then be applied to the first to the third order schemes, namely IMEX$k$-DG$k$-S scheme, $k=1, 2, 3$ in Section \ref{sec:fourier}. 
The analysis shows that our schemes are uniformly stable with respect to $\vareps$ and unconditionally stable in the diffusive regime.   Throughout this Section, we assume periodic boundary conditions in $x$, and $\sigma_s(x)\geq \sigma_m>0, \;\forall x\in \Omega_x$.

\subsection{Energy analysis for IMEX1-DG1-S scheme}
\label{sec:energy}

In this section, 
we will present an energy approach for stability analysis of
the IMEX1-DG1-S scheme \eqref{eq:full_first}.
 The mesh is assumed to be  regular, namely,  there exists $\delta$ such that  $h_i/h\geq\delta,\; \forall i$, during the mesh refinement. We use $||\cdot||$ to denote the standard $L^2$ norm for $L^2(\Omega_x)$, and let  $|||g|||=\sqrt{\lgl(g,g)\rgl_h}$ and $|||g|||_s=\sqrt{\lgl (\sigma_s g,g)\rgl_h}$. For stability, we first define  a  $\mu$-dependent discrete energy $E_{\mu,h}$ with $\mu\in[0,1]$ as a parameter.  To guarantee $E_{\mu,h}$ non-increasing, we obtain $\mu$-dependent stability conditions. The results are further optimized with respect to $\mu$. The energy type stability analysis for higher order in time schemes is left to our future investigation.

\begin{defn}
\label{def:stab}
Given $\mu\in[0,1]$, we define a discrete energy for our schemes,
\begin{align}
\label{eq:mu:ene:def}
E_{\mu,h}^n=||\rho_h^n||^2+\vareps^2 |||g_h^{n}|||^2+(1-\mu)\Dt|||g_h^n|||^2_s.
\end{align} 
The scheme is $\mu$-stable, if $E_{\mu,h}^{n+1}\leq E_{\mu,h}^n,\;\forall n\geq0$. If there exists $\mu\in[0,1]$ such that the scheme is $\mu$-stable, then the scheme is stable. If the scheme is stable (resp. $\mu$-stable) for arbitrary 
$\Dt>0$, then it is unconditionally stable (resp. $\mu$-stable).
\end{defn}

\begin{rem}
The $\mu$-dependent discrete energy $E_{\mu,h}^n$ in \eqref{eq:mu:ene:def} is quite different from that in  \cite{jang2014analysis, peng2020analysis}. Particularly,  the discrete energy in \cite{jang2014analysis, peng2020analysis} involves $\rho_h$ and $g_h$ from different time levels.
\end{rem}

\begin{thm}[{\bf$\mu$-stability}]
\label{thm:mu_stable}
Given $\mu\in[0,1]$, the IMEX1-DG1-S scheme is unconditionally $\mu$-stable, if 
\begin{align}
\frac{\vareps}{\sigma_m h} \leq \lambda_0(\mu):=\frac{(1-\mu)\delta}{2||v||_\infty}.
\label{eq:uncond-stable-region}
\end{align}
Otherwise, it is $\mu$-stable under the time step condition
\begin{align}
\Dt\leq\tau_0(\mu):=\frac{2\vareps^2 h}{2\vareps ||v||_\infty/\delta-(1-\mu)\sigma_mh}.\label{eq:CFL}
\end{align}
Here $\delta$ is the mesh regularity parameter.
\end{thm}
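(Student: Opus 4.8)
The plan is to run an energy estimate on the fully discrete IMEX1-DG1-S scheme \eqref{eq:full_first} and show that, under the stated conditions, $E_{\mu,h}^{n+1}\le E_{\mu,h}^n$. First I would test \eqref{eq:full_first_rho} with $\testR=\rho_h^{n+1}$, test \eqref{eq:full_first_g} with $\testG=g_{h,l}^{n+1}$, multiply the latter by $\vareps^2\omega_l$ and sum over $l$, and add the two identities. By Lemma~\ref{lem:adjoint_relation} (namely $l_h(\testR,\testG)=d_h(\testG,\testR)$) the coupling contributions $l_h(\lgl v g_h^{n+1}\rgl_h,\rho_h^{n+1})$ and $\sum_l\omega_l v_l\,d_h(\rho_h^{n+1},g_{h,l}^{n+1})$ cancel exactly. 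Applying the elementary identity $(a-b,a)=\tfrac12(\|a\|^2-\|b\|^2+\|a-b\|^2)$ to the two discrete time-derivative terms turns them into the energy differences $\|\rho_h^{n+1}\|^2-\|\rho_h^n\|^2$ and $\vareps^2(|||g_h^{n+1}|||^2-|||g_h^n|||^2)$, plus the nonnegative increments $\|\rho_h^{n+1}-\rho_h^n\|^2$ and $\vareps^2|||g_h^{n+1}-g_h^n|||^2$.

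Next I would isolate the dissipative structure. The $\sigma_a$ contributions are sign-definite and discarded, and the scattering term produces $-2\Dt|||g_h^{n+1}|||_s^2$; this is the term that $\mu$ splits, since the $\mu$-energy carries $(1-\mu)\Dt|||g_h^n|||_s^2$. Using $\lgl g_h^{n+1}\rgl_h=0$ from \eqref{eq:g:prop}, the $(\bI-\Pi)$ upwind term collapses to $\sum_l\omega_l(\mD^{up}_h(g_h^n;v_l),g_{h,l}^{n+1})$, which I split along $g_{h,l}^{n+1}=g_{h,l}^n+(g_{h,l}^{n+1}-g_{h,l}^n)$ into the genuine upwind dissipation $A:=\sum_l\omega_l(\mD^{up}_h(g_h^n;v_l),g_{h,l}^n)\ge 0$ and a cross term $B_1$ in the increment $\delta g:=g_h^{n+1}-g_h^n$. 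After discarding $\|\rho_h^{n+1}-\rho_h^n\|^2$ and the $\sigma_a$ terms, bounding $E_{\mu,h}^{n+1}-E_{\mu,h}^n$ reduces to the single inequality
\begin{align}
-2\Dt\vareps B_1\le \vareps^2|||\delta g|||^2+2\Dt\vareps A+(1+\mu)\Dt|||g_h^{n+1}|||_s^2+(1-\mu)\Dt|||g_h^n|||_s^2 .
\end{align}

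The heart of the argument, and the main obstacle, is to prove this inequality with constants that produce exactly the threshold $2\vareps\|v\|_\infty/\delta$ versus $(1-\mu)\sigma_m h$. I would apply Young's inequality to $B_1$ with a free parameter $\gamma>0$, obtaining $-2\Dt\vareps B_1\le \tfrac{\Dt^2\vareps^2}{\gamma}\sum_l\omega_l\|\mD^{up}_h(g_h^n;v_l)\|^2+\gamma|||\delta g|||^2$. The essential lemma is a sharp inverse-type estimate for the upwind operator on $U_h^0$, $\|\mD^{up}_h(g;v)\|^2\le \tfrac{2|v|}{\delta h}(\mD^{up}_h(g;v),g)$, which follows from the explicit piecewise-constant form of $\mD^{up}_h$ together with the mesh regularity $h_i\ge\delta h$; summing over $l$ gives $\sum_l\omega_l\|\mD^{up}_h(g_h^n;v_l)\|^2\le \tfrac{2\|v\|_\infty}{\delta h}A$. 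The delicate point is the distribution of the absorption: the $\tfrac{\Dt^2\vareps^2}{\gamma}$-term is controlled by the explicit dissipation $2\Dt\vareps A$ exactly when $\gamma\ge \tfrac{\vareps\|v\|_\infty}{\delta h}\Dt$, while $\gamma|||\delta g|||^2$ is absorbed partly by the increment $\vareps^2|||\delta g|||^2$ and partly, through $|||\delta g|||^2\le 2|||g_h^{n+1}|||^2+2|||g_h^n|||^2$ and $|||\cdot|||^2\le\sigma_m^{-1}|||\cdot|||_s^2$, by the scattering dissipation, which permits $\gamma\le \vareps^2+\tfrac{(1-\mu)\sigma_m}{2}\Dt$.

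The proof then concludes by checking that a feasible $\gamma$ exists, i.e. $\tfrac{\vareps\|v\|_\infty}{\delta h}\Dt\le \vareps^2+\tfrac{(1-\mu)\sigma_m}{2}\Dt$; rearranged, this reads $\Dt\big(\tfrac{2\vareps\|v\|_\infty}{\delta h}-(1-\mu)\sigma_m\big)\le 2\vareps^2$, which holds for every $\Dt>0$ when $\tfrac{2\vareps\|v\|_\infty}{\delta h}\le(1-\mu)\sigma_m$ (equivalently $\vareps/(\sigma_m h)\le\lambda_0(\mu)$) and otherwise forces $\Dt\le\tau_0(\mu)$. I expect the genuine difficulty to lie in the balancing described above rather than in the bookkeeping: the naive Young split that dumps the whole factor $\vareps$ into a $\vareps^2|||\delta g|||^2$ term destroys the $\vareps$-smallness of the explicit transport contribution and yields only a transport-type restriction $\Dt=O(\vareps h)$; one must instead retain $\vareps$ in the transport term and lean on the strong implicit scattering dissipation to absorb it, which is precisely the mechanism that delivers unconditional stability in the diffusive regime.
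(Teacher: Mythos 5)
Your proposal is correct, and at its core it is the same energy argument as the paper's: the same test functions $\testR=\rho_h^{n+1}$ and $\testG=\vareps^2\omega_l\, g_{h,l}^{n+1}$, the same exact cancellation of the transport coupling via Lemma~\ref{lem:adjoint_relation}, and a one-parameter Young inequality whose feasibility interval produces precisely \eqref{eq:uncond-stable-region} and \eqref{eq:CFL}. The differences are in the bookkeeping, and they are genuine but equivalent. You split the explicit upwind term on its \emph{second} argument, so your dissipation $A=\sum_l\omega_l(\mD^{up}_h(g_h^n;v_l),g_{h,l}^n)$ sits at time level $n$, and the Young remainder is absorbed by $A$ itself through your operator-form inverse estimate $\|\mD^{up}_h(g;v)\|^2\le \frac{2|v|}{\delta h}(\mD^{up}_h(g;v),g)$ on $U_h^0$ --- which indeed holds under periodicity, since for piecewise constants both sides reduce to weighted sums of squared jumps, namely $\|\mD^{up}_h(g;v)\|^2=\sum_i v^2[g]^2_{\iL}/h_i$ and $(\mD^{up}_h(g;v),g)=\frac{|v|}{2}\sum_i[g]^2_{\iL}$ (with jumps at $x_{i+\frac12}$ when $v<0$). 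The paper instead splits on the \emph{first} argument, $\lgl \bh(g_h^{n},g_h^{n+1})\rgl_h=\lgl \bh(g_h^{n+1},g_h^{n+1})\rgl_h+\lgl \bh(g_h^{n}-g_h^{n+1},g_h^{n+1})\rgl_h$, keeping the jump dissipation at level $n+1$ and invoking the pointwise inverse estimate $|u(x_{i\pm\frac12})|=h_i^{-1/2}\|u\|_{L^2(I_i)}$; and where you absorb the $\gamma|||g_h^{n+1}-g_h^n|||^2$ excess via $|||g_h^{n+1}-g_h^n|||^2\le 2|||g_h^{n+1}|||^2+2|||g_h^n|||^2$ and $\sigma_s\ge\sigma_m$, the paper extracts $\frac{1-\mu}{4}|||g_h^{n+1}-g_h^n|||_s^2$ directly from its algebraic split \eqref{eq:proof_split}. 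Under the identification $\gamma=2\Dt\vareps\eta$, your admissible interval $\frac{\Dt\vareps\|v\|_\infty}{\delta h}\le\gamma\le\vareps^2+\frac{(1-\mu)\sigma_m\Dt}{2}$ coincides exactly with the paper's admissible range for $\eta$, so the two routes deliver identical thresholds; your variant has the modest advantage of bypassing the $\mu$-split identity, and your closing remark about why the naive split degrades to a transport-type restriction is exactly the balancing mechanism the paper's proof is built on.
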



\begin{proof}

Take $\testR=\rho_h^{n+1}$ in \eqref{eq:full_first_rho}, and take $\testG=\vareps^2 g_h^{n+1}$ in \eqref{eq:full_first_g}. 
Sum up \eqref{eq:full_first_g} for different collocation points $v_l$ with the corresponding weight $\omega_l$, we have 
\begin{subequations}
\begin{align}
&\frac{1}{\Dt}(\rho_h^{n+1}-\rho_h^n,\rho_h^{n+1})+l_h(\lgl vg_h^{n+1}\rgl_h,\rho_h^{n+1})\notag\\
=&\frac{1}{2\Dt}(||\rho_h^{n+1}||^2-||\rho_h^n||^2+||\rho_h^{n+1}-\rho_h^n||^2)+l_h(\lgl vg_h^{n+1}\rgl_h,\rho_h^{n+1}) 
=-(\sigma_a\rho_h^{n+1},\rho_h^{n+1}),
\label{eq:proof1-1}\\
&\frac{\vareps^2}{\Dt}\lgl(g_h^{n+1}-g_h^n,g_h^{n+1})\rgl_h+\vareps \lgl b_{h,v}(g_h^n,g_h^{n+1})\rgl_h-\lgl vd_h(\rho_h^{n+1}, g_h^{n+1})\rgl_h\notag\\
=&\frac{\vareps^2}{2\Dt}(|||g_h^{n+1}|||^2-|||g_h^n|||^2+|||g_h^{n+1}-g_h^n|||^2)+\vareps\lgl b_{h,v}(g_h^n,g_h^{n+1})\rgl_h
-d_h(\rho_h^{n+1}, \lgl vg_h^{n+1}\rgl_h)\notag\\
=&  -|||g_h^{n+1}|||_s^2-\varepsilon^2\lgl (\sigma_a g_h^{n+1},g_h^{n+1})\rgl_h.
\label{eq:proof1-2}
\end{align}
\end{subequations}

Summing up \eqref{eq:proof1-1} and \eqref{eq:proof1-2}, with Lemma \ref{lem:adjoint_relation}, we obtain
\begin{align}
&\frac{1}{2\Dt}( ||\rho_h^{n+1}||^2+\vareps^2|||g_h^{n+1}|||^2-||\rho_h^n||^2-\vareps^2|||g_h^n|||^2)
+\frac{1}{2\Dt}(||\rho_h^{n+1}-\rho_h^n||^2+\vareps^2|||g_h^{n+1}-g_h^n|||^2)
\notag\\
&+(\sigma_a\rho_h^{n+1},\rho_h^{n+1})+\varepsilon^2\lgl (\sigma_a g_h^{n+1},g_h^{n+1})\rgl_h+|||g_h^{n+1}|||_s^2\notag \\
&+\vareps
\lgl b_{h,v}(g_h^n-g_h^{n+1},g_h^{n+1})\rgl_h+\vareps\lgl b_{h,v}(g_h^{n+1},g_h^{n+1})\rgl_h
=0.
\label{eq:proof1}
\end{align}
Similar to \cite{peng2020analysis}, we split $|||g_h^{n+1}|||_s^2$ into 
\begin{align}
\label{eq:proof_split}
|||g_h^{n+1}|||_s^2
=\mu|||g_h^{n+1}|||_s^2+ (1-\mu)\Big(\frac{1}{2}|||g_h^{n+1}|||_s^2-\frac{1}{2}|||g_h^{n}|||_s^2
+\frac{1}{4}|||g_h^{n+1}-g_h^{n} |||_s^2 + \frac{1}{4}|||g_h^{n+1}+g_h^{n} |||_s^2\Big).
 \end{align}
With the piecewise constant in the discrete space, 
$\partial_x g_h^{n+1}=0$, and $|u(x_{i\pm\frac{1}{2}})|= h_i^{-1/2}||u||_{L^2(I_i)}$, $\forall u\in P^0(I_i)$. 
Following similar steps as in \cite{jang2014analysis} (such as its equation (3.22) and (3.24)), using the property of the solution in \eqref{eq:g:prop} and Young's inequality,
we obtain
\begin{align}
&\lgl \bh(g_h^{n+1},g_h^{n+1})\rgl_h =\left\lgl\sum_i\frac{|v|}{2}[g_h^{n+1}]_\iL^2 \right\rgl_h,\label{eq:upwind}\\
&\left| \lgl \bh(g_h^{n+1}-g_h^{n} ,g_h^{n+1}) \rgl_h \right|\leq \eta |||g_h^{n+1}-g_h^{n}|||^2+\frac{1 }{\eta\delta h}\sum_i \left\lgl (\frac{|v|}{2}[g_h^n]_\iL)^2\right \rgl_h.
\label{eq:upwind_jump}
\end {align}
Here, $\eta$ is a positive parameter, which will be determined later.

Substitute \eqref{eq:proof_split}-\eqref{eq:upwind_jump} into \eqref{eq:proof1}, and utilize $\sigma_a\geq 0$, we get
\begin{align}
&\frac{1}{2\Dt}( E_{\mu,h}^{n+1}-E_{\mu,h}^n)+\frac{1}{2\Dt}||\rho_h^{n+1}-\rho_h^n||^2
+(\frac{\vareps^2}{2\Dt}+\frac{1-\mu}{4}\sigma_m - \vareps\eta)|||g_h^{n+1}-g_h^n|||^2\notag\\
&+\frac{1-\mu}{4}|||g_h^{n+1}+g_h^n|||_s^2+\mu|||g_h|||_s^2+\vareps(1-\frac{||v||_\infty} {2\eta \delta h})\lgl \sum_{i}\lgl\frac{|v|}{2}[g_h^{n+1}]^2\rgl_h
\leq 0.
\end{align}

In order to guarantee $E_{\mu,h}^{n+1}\leq E_{\mu,h}^n$, we require 
\begin{subequations}
\begin{align}
&\frac{\vareps^2}{2\Dt}+\frac{1-\mu}{4}\sigma_m - \varepsilon\eta\geq 0,\label{eq:proof_l3.a}\\
&1-\frac{||v||_\infty}{2\eta\delta h}\geq 0.\label{eq:proof_l3}
\end{align}
\end{subequations}
We choose $\eta=\frac{\vareps}{2\Dt}+\frac{1-\mu}{4\vareps}\sigma_m$, so \eqref{eq:proof_l3.a} holds, and 
the inequality in \eqref{eq:proof_l3} becomes
\begin{align}
\frac{\vareps}{\Dt}\geq\frac{2\vareps||v||_\infty/\delta-(1-\mu)\sigma_mh }{2\vareps h}.\label{eq:proof_final}
\end{align}
When $\frac{\vareps}{\sigma_m h}\leq\frac{(1-\mu)\delta}{2||v||_\infty}$, \eqref{eq:proof_final} holds for arbitrary $\Dt>0$, hence the method is unconditionally stable. Otherwise, we need $\Dt$ to satisfy \eqref{eq:CFL} to have the conditional $\mu$-stability.
\end{proof}

Next we will optimize the results in Theorem \ref{thm:mu_stable} in $\mu$ to maximize the unconditionally stable region and also the allowable time step size when the scheme
 is conditionally stable.  
\begin{thm}[{\bf stability}]
\label{thm:stable}
The IMEX1-DG1-S scheme is unconditionally stable, if 
\begin{align}
\frac{\vareps}{\sigma_m h} \leq \frac{\delta}{2||v||_\infty}.
\end{align}
Otherwise, it is stable under the time step condition
\begin{align}
\Dt\leq\frac{2\vareps^2 h}{2\vareps ||v||_\infty/\delta-\sigma_mh}.
\label{eq:CFL}
\end{align}
\end{thm}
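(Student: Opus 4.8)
The plan is to derive Theorem~\ref{thm:stable} as a straightforward optimization of Theorem~\ref{thm:mu_stable} over the free parameter $\mu\in[0,1]$. The key logical lever is Definition~\ref{def:stab}: the scheme is (unconditionally) stable as soon as it is (unconditionally) $\mu$-stable for \emph{some} admissible $\mu$. Hence I would simply search for the $\mu$ that makes the stability region largest, i.e.\ that maximizes the unconditional threshold $\lambda_0(\mu)$ and the conditional time-step bound $\tau_0(\mu)$ simultaneously. I expect the winning choice to be $\mu=0$, which corresponds to retaining the full dissipative term $\Dt|||g_h^n|||_s^2$ in the discrete energy $E_{\mu,h}^n$ rather than splitting part of it away.

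For the unconditional statement, I would observe that $\lambda_0(\mu)=\frac{(1-\mu)\delta}{2||v||_\infty}$ is affine and strictly decreasing in $\mu$ on $[0,1]$, so its maximum over admissible $\mu$ is attained at $\mu=0$, giving $\lambda_0(0)=\frac{\delta}{2||v||_\infty}$. Since unconditional $\mu$-stability holds exactly when $\frac{\vareps}{\sigma_m h}\le\lambda_0(\mu)$, the existence of an admissible $\mu$ realizing this is equivalent to the point lying under the largest threshold, namely $\frac{\vareps}{\sigma_m h}\le\frac{\delta}{2||v||_\infty}$. This reproduces the stated unconditional condition, achieved at $\mu=0$.

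For the conditional statement, I would treat $\tau_0(\mu)=\frac{2\vareps^2 h}{2\vareps||v||_\infty/\delta-(1-\mu)\sigma_m h}$ as a function of $\mu$. Differentiating its denominator in $\mu$ gives $\sigma_m h>0$, so the denominator is strictly increasing; as the numerator is a fixed positive quantity, $\tau_0(\mu)$ is strictly decreasing and again maximized at $\mu=0$, where $\tau_0(0)=\frac{2\vareps^2 h}{2\vareps||v||_\infty/\delta-\sigma_m h}$. Because Definition~\ref{def:stab} requires only one admissible $\mu$, the largest time step guaranteeing stability is $\tau_0(0)$, matching the claimed bound.

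The only point needing care --- more a bookkeeping check than a genuine obstacle --- is to confirm that the two cases partition the parameter space consistently and that $\tau_0(0)$ is a valid (finite, positive) time step. I would verify that the complementary regime $\frac{\vareps}{\sigma_m h}>\frac{\delta}{2||v||_\infty}$ rearranges to $2\vareps||v||_\infty/\delta>\sigma_m h$, so that precisely when the scheme fails to be unconditionally stable the denominator of $\tau_0(0)$ is positive. Thus the unconditional region and the conditional time-step bound at $\mu=0$ fit together exactly, completing the proof.
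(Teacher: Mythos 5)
Your proposal is correct and takes essentially the same route as the paper: both obtain Theorem~\ref{thm:stable} by maximizing $\lambda_0(\mu)$ and $\tau_0(\mu)$ from Theorem~\ref{thm:mu_stable} over $\mu\in[0,1]$, with both maxima attained at $\mu=0$. Your extra check that the denominator of $\tau_0(0)$ is positive exactly in the conditional regime is a detail the paper leaves implicit, but it changes nothing substantive.
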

\begin{proof}
Based on the definition of $\mu$-stability and stability in Definition \ref{def:stab}, the results in Theorem \ref{thm:mu_stable} further imply that the IMEX1-DG1-S scheme is unconditionally stable if 
\begin{align}
\frac{\varepsilon}{\sigma_m h}\leq\max_{\mu\in[0,1]}{\lambda_0(\mu) }= \max_{\mu\in[0,1]} \left(\frac{(1-\mu)\delta}{2||v||_\infty}\right)=\frac{\delta}{2||v||_\infty}.
\end{align}
When $\frac{\vareps}{\sigma_m h}>\frac{\delta}{2||v||_\infty}$,  the scheme is conditionally stable under the following time step restriction
\begin{align}
\Dt\leq\max_{\mu\in[0,1]}\tau_0(\mu)=\max_{\mu\in[0,1]}\left(\frac{2\vareps^2 h}{2\vareps ||v||_\infty/\delta-(1-\mu)\sigma_mh}\right)
=\frac{2\vareps^2 h}{2\vareps ||v||_\infty/\delta-\sigma_mh}.
\end{align}
\end{proof}

\begin{rem}
For a multiscale problem, it is possible to have subregions with $\sigma_s=0$ where the problem is purely transport. In this case,  $\sigma_m=0$, and 
our proofs above still hold. Specifically, the IMEX1-DG1-S scheme is always conditionally stable under the time step condition 
$\Dt\leq\frac{2\vareps^2 h\delta}{2\vareps ||v||_\infty}=\frac{\vareps h\delta}{||v||_\infty}$, and the unconditional stability is not expected.
\end{rem}

\subsection{Fourier Analysis for IMEX$k$-DG$k$-S scheme, $k=1, 2, 3$}\label{sec:fourier}

In this section, Fourier analysis is performed for the IMEX$k$-DG$k$-S scheme, $k=1, 2, 3$, when the schemes are applied to the one-group transport equation in slab geometry with $\Omega_v=[-1,1]$. Related, 
$\lgl f\rgl= 
 \frac{1}{2}\int_{-1}^1 f(v)dv$, with $dv$ the standard  Lebesgue measure.
$16$ Gaussian quadrature points together with the respective quadrature weights are applied to discretize the velocity space. 
As typical for Fourier analysis, it is assumed that the mesh is uniform and $\sigma_s(x)=\sigma_m>0, \; \forall x\in\Omega_x$. Motivated by that the  stability result for the IMEX1-DG1-S scheme in Section \ref{sec:energy} does not depend on $\sigma_a$, we further assume $\sigma_a=0$.
Similar to \cite{peng2019stability}, 
we first identify an invariant scaling structure of the amplification matrix. Then, by numerically solving an eigenvalue problem, we obtain the stability condition for IMEX$k$-DG$k$-S scheme, $k=1,2,3$.

\medskip
\noindent {\textbf{Setup of the Fourier analysis:} } 
We will use the IMEX1-DG$k$-S scheme as an example to demonstrate the setup. 
On the element $I_m$, the numerical solutions can be expanded as
\begin{align}
\rho_h^n(x)=\sum_{l=0}^{k-1}\rho^n_{ml}\phi_{l}^m(x),\quad g_{h,j}^n(x)=\sum_{l=0}^{k-1}g^n_{j,ml}\phi^{m}_l(x),\;\forall x\in I_m
\end{align}
where $\phi_l^{m}(x)=\phi_l\left( \frac{x-x_m}{h_m/2}\right)$, $\phi_l(x)$ is the $l$-th order  Legendre polynomial on $[-1, 1]$. Let ${\boldsymbol{\rho} }_m^n=(\rho_{m0}^n,\dots,\rho_{m\;k-1}^n)^T$ and ${\bf{g} }_{jm}^n=(g^n_{j,m0},\dots,g^n_{j,m\;k-1})^T$.

Take the Fourier anstaz ${\boldsymbol{\rho}}_m^n=\exp(\mathcal{I}\kappa x_m){\boldsymbol{\widehat{\rho}}}^n$ and ${\bf{g}}_{jm}^n=\exp(\mathcal{I}\kappa x_m){\bf{\widehat{g}}}_j^n$ (with $\mathcal I^2=-1$), and 
plug  them into the IMEX1-DGk-S scheme, we obtain
\begin{align}
\label{eq:fourier_matrix_vector}
&\underbrace{
\left(
\begin{matrix}
h\widehat{M} 		  	&\Dt \omega_1 v_1 \widehat{D}^+ 		& \Dt \omega_2 v_2 \widehat{D}^+ 		&\dots 	& \Dt\omega_{N_v}v_{N_v}\widehat{D}^+\\
v_1 \Dt \widehat{D}^- 	& h(\vareps^2+\sigma_m\Dt) \widehat{M} 	& 0 						&\dots 	& 0 \\
v_2 \Dt \widehat{D}^- 	& 0						&  h(\vareps^2+\sigma_m\Dt) \widehat{M}	&\dots 	& 0 \\
\vdots	  	& \vdots					& \vdots 					&\ddots	&\vdots\\
v_{N_v}\Dt \widehat{D}^-	& 0						& 0 						&\dots  	&  h(\vareps^2+\sigma_m\Dt) \widehat{M}
\end{matrix}
\right) }_{G_L}
\left(
\begin{matrix}
{\boldsymbol{\widehat{\rho}} }^{n+1} \\
{\bf{\widehat{g} }}_1^{n+1}\\
{\bf{\widehat{g} }}_2^{n+1}\\
\vdots\\
{\bf{\widehat{g}}}_{N_v}^{n+1}
\end{matrix}
\right)
\notag\\
=&\underbrace{\left(
\begin{matrix}
h\widehat{M} 		  	&0 		& 0	&\dots 	& 0\\
0	& \vareps^2 h\widehat{M}+\vareps\Dt \widehat{U}_1 	& 0 						&\dots 	& 0 \\
0	& 0						& \vareps^2 h\widehat{M}+\vareps\Dt \widehat{U}_2 &\dots 	& 0 \\
\vdots	  	& \vdots					& \vdots 					&\ddots	&\vdots\\
0	& 0						& 0 						&\dots  	&  \vareps^2 h\widehat{M}+\vareps\Dt \widehat{U}_{N_v} 
\end{matrix}
\right)}_{G_R}
\left(
\begin{matrix}
{\boldsymbol{\widehat{\rho}} }^{n} \\
{\bf{\widehat{g} }}_1^{n}\\
{\bf{\widehat{g} }}_2^{n}\\
\vdots\\
{\bf{\widehat{g}}}_{N_v}^{n}
\end{matrix}
\right).
\end{align}
Here, $\widehat{M}$, $\widehat{D}^{+}$, $\widehat{D}^-$ and $\widehat{U}$ are $k\times k$ matrices, and they are defined as follows.
\begin{subequations}
\begin{align}
&(\widehat{M})_{ij} = \frac{1}{2}\int_{-1}^{1}\phi_{i}(x)\phi_j(x) dx,\\
&(\widehat{D}^{-}(\xi)\;)_{ij}=- \int_{-1}^{1} \phi_j(x)\partial_x \phi_i(x)dx+ \phi_j(1) \phi_i(1)
-\exp({-\mathcal{I} \xi})\phi_j(1) \phi_i(-1),\\
&(\widehat{D}^{+}(\xi)\;)_{ij}=- \int_{-1}^{1} \phi_j(x)\partial_x \phi_i(x)dx+\exp({\mathcal{I} \xi}) \phi_j(-1) \phi_i(1)
-\phi_j(-1) \phi_i(-1),\\
&(\widehat{U}_{l}(\xi)\;)_{ij}=
\begin{cases}
v_{l}(\widehat{D}^{-}(\xi)\;)_{ij} -\sum_{l'=1}^{N_v}\omega_{l'} v_{l'}\big(\mathbbm{1}_{\{v_{l'}\geq0\}}(v_{l'})(\widehat{D}^{-}(\xi)\;)_{ij}+\mathbbm{1}_{\{v_{l'}<0\}}(v_{l'})(\widehat{D}^{+}(\xi)\;)_{ij}\big) ,\;\text{if}\;v_l\geq 0,\\
v_{l}(\widehat{D}^{+}(\xi)\;)_{ij}-\sum_{l'=1}^{N_v}\omega_{l'} v_{l'}\big(\mathbbm{1}_{\{v_{l'}\geq0\}}(v_{l'})(\widehat{D}^{-}(\xi)\;)_{ij}+\mathbbm{1}_{\{v_{l'}<0\}}(v_{l'})(\widehat{D}^{+}(\xi)\;)_{ij}\big),\;\text{if}\;v_l<0,
\end{cases}
\end{align}
\end{subequations}
where
$\xi=\kappa h$ is the discrete wave number, and $\mathbbm{1}_{S}(y)
$
is the indicator function of the set $S$. 
Define block matrices 
\begin{align}
&{\bm{D} }^{-}=\left(v_1\widehat{D}^{-},\dots,v_{N_v}\widehat{D}^{-}\right)^T\in\mathbb{R}^{kN_v\times k},\;
{\bm{D} }^{+}=\left(\omega_1v_1\widehat{D}^{+},\dots,\omega_{N_v}v_{N_v}\widehat{D}^{+}\right)\in\mathbb{R}^{k\times kN_v},\notag\\
&{\bm{M} } = \textrm{diag}(\widehat{M},\dots,\widehat{M})\in\mathbb{R}^{kN_v\times kN_v},\;
{\bm{U} }= \textrm{diag}(\widehat{U}_1,\dots,\widehat{U}_{N_v})\in\mathbb{R}^{kN_v\times kN_v}.
\end{align}
Then, $G_L$ and $G_R$ can be rewritten as 
\begin{align}
G_L=\left(\begin{matrix}
h\widehat{M} & \Dt{\bm{D}}^+ \\
\Dt{\bm{D}}^-& h(\vareps^2+\sigma_m\Dt){\bm{M}}
\end{matrix}
\right)\;\;
\text{and}\;\;
G_R=\left(\begin{matrix}
h\widehat{M} & 0 \\
0& \vareps^2h{\bm{M}}+\vareps\Dt{\bm{U}}
\end{matrix}
\right),
\end{align}
With the amplification matrix as $\bm{G}^{(1,k)}=\bm{G}^{(1,k)}(\vareps,\sigma_m,h,\Dt; \xi)=G_L^{-1}G_R$ and ${\bf{V} }^{n}=\left({\boldsymbol{\widehat{\rho}} }^{n},{\bf{\widehat{g} }}_1^{n},{\bf{\widehat{g} }}_2^{n},\dots{\bf{\widehat{g}}}_{N_v}^{n}\right)^T$, \eqref{eq:fourier_matrix_vector} becomes ${\bf{V}}^{n+1}=G^{(1,k)}{\bf{V} }^n$. Similarly, the amplification matrix ${\bm{G}}^{(p,k)}$ of the  IMEX$p$-DG$k$-S scheme
can be derived. 
To study the numerical stability, we will adopt the following principle. 
\begin{itemize}
\item[]
\noindent {\bf  Principle for Numerical Stability \cite{peng2019stability}:} {\em
For any given $\vareps, h, \Dt$, let the eigenvalues of $\bm{G}^{(p,k)}$ be $\lambda_i(\xi)$, $i=1,\dots, 2k$. Our scheme is said to be stable, if for all $\xi\in[-\pi, \pi]$, it satisfies either
	\begin{align}
(*)  \hspace{0.5in} &	\max_{i=1,\dots,2k}\{|\lambda_i(\xi)| \}<1,\quad\mbox{or}
	\label{fourier:stable_condition1}\\
(*) \hspace{0.5in}	&\max_{i=1,\dots,2k}\left\{|\lambda_i(\xi)| \right\}=1 \quad\text{and}\quad\bm{G}^{(p,k)}\quad\text{is diagonalizable}.
	\label{fourier:stable_condition2}
	\end{align}
	}
\end{itemize}
This principle is a necessary condition to guarantee the standard $L^2$ energy non-increasing. Before presenting the stability results, we first show an intrinsic scaling structure of the amplification matrices.


\begin{thm}\label{thm:fourier_invariant}
For any given $k\geq 1$  and $p=1, 2, 3$, the amplification matrix  ${\bm{G}^{(p,k)} }(\vareps,\sigma_m,h,\Dt;\xi)$ of the IMEX$p$-DG$k$-S  method  is similar to some matrix $\widehat{ \bm{G}}^{(p,k)}(\frac{\vareps}{\sigma_mh},\frac{\Dt}{\vareps h};\xi)$. In other words, the eigenvalues of ${\bm{G}^{(p,k)} }(\vareps,\sigma_m,h,\Dt;\xi)$ depend on $\vareps, h, \Dt, \sigma_m$ only  in terms of $\frac{\vareps}{\sigma_m h}$ and $\frac{\Dt}{\vareps h}$,  or equivalently, only in terms of $\frac{\vareps}{\sigma_mh}$ and $\frac{\vareps^2}{\sigma_m\Dt}=\frac{\vareps/(\sigma_m h)} {\Dt/(\vareps h)}$.
\end{thm}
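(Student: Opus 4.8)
The plan is to exhibit, for each $p$ and $k$, an explicit $\vareps$-dependent block-diagonal similarity transformation that strips the dimensional parameters $\vareps,\sigma_m,h,\Dt$ from $\bm{G}^{(p,k)}$, leaving a matrix that depends on these quantities only through the two dimensionless groups $A:=\frac{\vareps}{\sigma_m h}$ and $B:=\frac{\Dt}{\vareps h}$ (together with $\xi$). Since eigenvalues are invariant under similarity, this proves the statement. The whole computation rests on the three identities $\frac{\Dt}{h}=B\vareps$, $\ \vareps^2+\sigma_m\Dt=\vareps^2(1+B/A)$, and $\frac{\vareps\Dt}{h}=B\vareps^2$, which re-express every coefficient occurring in $G_L$ and $G_R$ in terms of $\vareps$, $A$, and $B$; note also that $\widehat M,\bm{M},\bm{D}^{\pm},\bm{U}$ depend only on $\xi$ and the fixed velocity quadrature.

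For the first-order scheme I would take $S=\mathrm{diag}(\vareps I_k,\, I_{kN_v})$ and conjugate, so that $S^{-1}\bm{G}^{(1,k)}S=\tilde G_L^{-1}\tilde G_R$ with $\tilde G_L=S^{-1}G_L S$, $\tilde G_R=S^{-1}G_R S$. Under this conjugation the diagonal blocks of $G_L,G_R$ are unchanged, the $\rho$--$g$ coupling block $\Dt\bm{D}^+$ is scaled by $\vareps^{-1}$, and the $g$--$\rho$ block $\Dt\bm{D}^-$ by $\vareps$. Because $\tilde G_L^{-1}\tilde G_R$ is unaffected if both factors are left-multiplied by a common invertible matrix or divided by a common scalar, I would then replace $\tilde G_{L},\tilde G_R$ by $\frac1h\,\mathrm{diag}(I_k,\vareps^{-2}I_{kN_v})\,\tilde G_{L,R}$. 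A term-by-term check using the three identities shows these normalized blocks equal
\[
\widehat{G}_L=\begin{pmatrix}\widehat M & B\bm{D}^+\\ B\bm{D}^- & (1+B/A)\bm{M}\end{pmatrix},\qquad
\widehat{G}_R=\begin{pmatrix}\widehat M & 0\\ 0 & \bm{M}+B\bm{U}\end{pmatrix},
\]
both functions of $A,B,\xi$ only. Hence $S^{-1}\bm{G}^{(1,k)}S=\widehat G_L^{-1}\widehat G_R=:\widehat{\bm{G}}^{(1,k)}(A,B;\xi)$.

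For $p=2,3$ I would run the same reduction at the level of the full multi-stage system, before the inner stages are eliminated. Writing the stage updates \eqref{eq:full_high} in Fourier variables as $L_{\mathrm{stage}}\widehat{\mathbf U}=R_{\mathrm{stage}}\widehat{\mathbf V}^n$ (with $\sigma_a=0$ as assumed in this section), applying the block-diagonal scaling $\mathcal S=\mathrm{diag}(S,\dots,S)$ to the stacked stage vector, and invoking the globally stiffly accurate property \eqref{eq:full_high_last_stage} so that $\widehat{\mathbf V}^{n+1}$ is precisely the last stage (hence the stage-extraction commutes with $\mathcal S$), reduces $S^{-1}\bm{G}^{(p,k)}S$ to a product built from $\mathcal S^{-1}L_{\mathrm{stage}}\mathcal S$ and $\mathcal S^{-1}R_{\mathrm{stage}}S$. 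Since the IMEX-RK coefficients $a_{ij},\tilde a_{ij}$ are pure numbers, each coupling block again carries exactly one of the factors $\Dt\bm{D}^{\pm}$, $\vareps\Dt\widehat U$, or $\sigma_m\Dt\widehat M$, so the same $\vareps$-power accounting as above collapses every surviving coefficient to $A$, $B$, or $B/A$.

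The first-order computation is routine; the genuine obstacle is the multi-stage bookkeeping in the last step. One must verify that a \emph{single} $\vareps$-scaling simultaneously renders dimensionless the implicit coupling ($\Dt\bm{D}^{\pm}$, coefficient $a_{ij}$), the explicit transport term ($\vareps\Dt\widehat U$, coefficient $\tilde a_{ij}$), and the implicit collision term ($\sigma_m\Dt\widehat M$, coefficient $a_{ij}$) across all stages, and that eliminating the intermediate stages does not reintroduce a bare $\vareps$, $\sigma_m$, $h$, or $\Dt$. The structural reason this works is exactly the $\vareps$-splitting of the IMEX strategy in \eqref{eq:full_high}: the explicit term enters with $\vareps^{-1}$ and the implicit collision term with $\vareps^{-2}$, so that after conjugation by $S$ and row-normalization by $\vareps^{-2}$ all three types of terms land on the common scales $B$, $B$, and $B/A$, respectively.
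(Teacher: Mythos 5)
Your proof is correct and follows essentially the same route as the paper: an explicit block-diagonal scaling combined with a common left (row) normalization that cancels in $G_L^{-1}G_R$, so that every block becomes a function of $\frac{\vareps}{\sigma_m h}$ and $\frac{\Dt}{\vareps h}$ alone, with eigenvalues preserved by similarity. The only cosmetic differences are your choice of $S=\mathrm{diag}(\vareps I_k,\,I_{kN_v})$ in place of the paper's $\mathrm{diag}(\sigma_m h J_k,\,J_{kN_v})$ (the two conjugations differ by a factor depending only on $\frac{\vareps}{\sigma_m h}$, hence are interchangeable), and your monolithic stacked-stage treatment of $p=2,3$ where the paper runs a stage-by-stage induction on the transfer matrices ${\bm{G}}^{(p,k)}_{lq}$ --- both rest on the same observation that a single scaling works uniformly across all stages of the globally stiffly accurate IMEX-RK method.
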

\begin{proof}
We start  with $p=1$. 
With  $J_m$ as the $m\times m$ identity matrix, one gets
\begin{align}
&{\bm{G}^{(1,k)} }=G_L^{-1}G_R\notag\\
&=
\left(
\left(
\begin{matrix}
\frac{\sigma_m}{\vareps}J_k & 0 \\
0& \frac{1}{\vareps^2 h}J_{kN_v}
\end{matrix}
\right)
\left(\begin{matrix}
h\widehat{M} & \Dt{\bm{D}}^+ \\
\Dt{\bm{D}}^-& h(\vareps^2+\sigma_m\Dt){\bm{M}}
\end{matrix}
\right)
\right)^{-1}
\left(
\begin{matrix}
\frac{\sigma_m}{\vareps}J_k & 0 \\
0& \frac{1}{\vareps^2 h}J_{kN_v}
\end{matrix}
\right)
\left(\begin{matrix}
h\widehat{M} & 0 \\
0& \vareps^2h{\bm{M}}+\vareps\Dt{\bm{U}}
\end{matrix}
\right)\notag\\
&=
\left(\begin{matrix}
\frac{\sigma_m h}{\vareps}\widehat{M} & \frac{\sigma_m\Dt}{\vareps}{\bm{D}}^+ \\
\frac{\Dt}{\vareps^2 h}{\bm{D}}^-& (1+\frac{\sigma_m\Dt}{\vareps^2 }){\bm{M}}
\end{matrix}
\right)^{-1}
\left(\begin{matrix}
\frac{\sigma_m h}{\vareps}\widehat{M} & 0 \\
0& {\bm{M}}+\frac{\Dt}{\vareps h}{\bm{U}}
\end{matrix}
\right).
\end{align}
Using the relations of
\begin{align*}
\left(\begin{matrix}
\sigma_mhJ_k & 0\\
0 & J_{kN_v}
\end{matrix}
\right)^{-1}
\left(\begin{matrix}
\frac{\sigma_m h}{\vareps}\widehat{M} & \frac{\sigma_m\Dt}{\vareps}{\bm{D}}^+ \\
\frac{\Dt}{\vareps^2 h}{\bm{D}}^-& (1+\frac{\sigma_m\Dt}{\vareps^2 }){\bm{M}}
\end{matrix}
\right)
\left(\begin{matrix}
\sigma_mhJ_k & 0\\
0 & J_{kN_v}
\end{matrix}
\right)
&=\left(\begin{matrix}
\frac{\sigma_m h}{\vareps}\widehat{M} & \frac{\Dt}{\vareps h}{\bm{D}}^+ \\
\frac{\sigma_m\Dt}{\vareps^2 }{\bm{D}}^-& (1+\frac{\sigma_m\Dt}{\vareps^2 }){\bm{M}}
\end{matrix}
\right),\\
\left(\begin{matrix}
\sigma_mhJ_k & 0\\
0 & J_{kN_v}
\end{matrix}
\right)^{-1}
\left(\begin{matrix}
\frac{\sigma_m h}{\vareps}\widehat{M} & 0 \\
0& {\bm{M}}+\frac{\Dt}{\vareps h}{\bm{U}}
\end{matrix}
\right)
\left(\begin{matrix}
\sigma_mhJ_k & 0\\
0 & J_{kN_v}
\end{matrix}
\right)
&=\left(\begin{matrix}
\frac{\sigma_m h}{\vareps}\widehat{M} & 0 \\
0& {\bm{M}}+\frac{\Dt}{\vareps h}{\bm{U}}
\end{matrix}
\right),
\end{align*}
we obtain
\begin{align}
&\left(\begin{matrix}
\sigma_m h J_k & 0\\
0 & J_{kN_v}
\end{matrix}
\right)^{-1}{\bm{G}}^{(1,k)}
\left(\begin{matrix}
\sigma_m h J_k & 0\\
0 & J_{kN_v}
\end{matrix}
\right)
=\left(\begin{matrix}
\frac{\sigma_m h}{\vareps}\widehat{M} & \frac{\Dt}{\vareps h}{\bm{D}}^+ \\
\frac{\sigma_m\Dt}{\vareps^2 }{\bm{D}}^-& (1+\frac{\sigma_m\Dt}{\vareps^2 }){\bm{M}}
\end{matrix}
\right)^{-1}
\left(\begin{matrix}
\frac{\sigma_m h}{\vareps}\widehat{M} & 0 \\
0& {\bm{M}}+\frac{\Dt}{\vareps h}{\bm{U}}
\end{matrix}
\right)
\notag\\
&=\left(\begin{matrix}
\frac{\sigma_m h}{\vareps}\widehat{M} & \frac{\Dt}{\vareps h}{\bm{D}}^+ \\
\frac{\sigma_m h}{\vareps}\cdot\frac{\Dt}{\vareps h }{\bm{D}}^-& (1+\frac{\sigma_m h}{\vareps}\cdot\frac{\Dt}{\vareps h }){\bm{M}}
\end{matrix}
\right)^{-1}
\left(\begin{matrix}
\frac{\sigma_m h}{\vareps}\widehat{M} & 0 \\
0& {\bm{M}}+\frac{\Dt}{\vareps h}{\bm{U}}
\end{matrix}
\right)
=
\widehat{ \bm{G}}^{(1,k)}(\frac{\vareps}{\sigma_mh},\frac{\Dt}{\vareps h};\xi).
\end{align}
This implies that ${\bm{G}}^{(1,k)}$ is similar to $\widehat{ \bm{G}}^{(1,k)}(\frac{\vareps}{\sigma_mh},\frac{\Dt}{\vareps h};\xi)$.

The proof can be generalized to $p=2,3$ through the mathematical induction. To see this, let
${\bf V}^{n,(0)}={\bf V}^n$, ${\bf{V} }^{n,(l)}=\left({\boldsymbol{\widehat{\rho}} }^{n,(l)},{\bf{\widehat{g} }}_1^{n,(l)},{\bf{\widehat{g} }}_2^{n,(l)},\dots{\bf{\widehat{g}}}_{N_v}^{n,(l)}\right)^T$, $l=1, \dots, s$, we have
$${\bf V}^{n,(l)}=\sum_{q=0}^{l-1} {\bm{G}}^{(p,k)}_{lq}(\vareps,\sigma_m,h,\Dt;\xi) {\bf V}^{n,(q)},\; l=1,\dots,s,\;\;\text{and}\;\; {\bf V}^{n+1}={\bf V}^{n,(s)}.$$
With similar argument as for $p=1$, one can find a ${\bm{\widehat{G}}}^{(p,k)}_{lq}(\frac{\vareps}{\sigma_m h},\frac{\Dt}{\vareps h};\xi)$ such that $\forall\; l=1,\dots, s$,
$$
\left(\begin{matrix}
\sigma_m h J_k & 0\\
0 & J_{kN_v}
\end{matrix}
\right)^{-1}
{\bm{G}}^{(p,k)}_{lq}(\vareps,\sigma_m,h,\Dt;\xi)\left(\begin{matrix}
\sigma_m h J_k & 0\\
0 & J_{kN_v}
\end{matrix}
\right)={\bm{\widehat{G}}}^{(p,k)}_{lq}(\frac{\vareps}{\sigma_m h},\frac{\Dt}{\vareps h};\xi),\;\;q=0,\dots, l-1.
$$
For every ${\bm{G}}^{(p,k)}_{lq}(\vareps,\sigma_m,h,\Dt;\xi)$, exactly the same similar transformation is performed, hence, ${\bm{G}}^{(p,k)}$ is similar to some $\widehat{ \bm{G}}^{(p,k)}(\frac{\vareps}{\sigma_mh},\frac{\Dt}{\vareps h};\xi)$.
\end{proof}

\noindent  \textbf{Fourier analysis results:} 
Based on Theorem \ref{thm:fourier_invariant} and the principle for numerical stability, the numerical stability results shall only depend on $\frac{\vareps}{\sigma_m h}$ and $\frac{\Dt}{\vareps h}$.  Set $\alpha=\log_{10}(\frac{\vareps}{\sigma_m h})$ and $\beta=\log_{10}(\frac{\Dt}{\vareps h})$.  For the IMEX$k$-DG$k$-S scheme, $k=1, 2, 3$, we numerically compute eigenvalues of the amplification matrix by uniformly sampling the discrete wave number $\xi\in
[-\pi,\pi]$ with spacing $\frac{2\pi}{100}$, $\alpha\in[-5,5]$ and $\beta\in[-5,4]$  with $\frac{1}{20}$ spacing. The stability results are presented in Figure \ref{fig:fourier}, with the white region being stable, and the black region being unstable. The main observations are summarized as follows, with $k=1, 2, 3$:
\begin{enumerate}
\item[1.)] For some $\alpha_k$, the IMEX$k$-DG$k$-S scheme is unconditionally stable when $\alpha<\alpha_k$, i.e. when  $\frac{\vareps}{\sigma_m h}<C_k$. This 
confirms the unconditional stability of the proposed schemes in the diffusive regime.
\item[2.)] In the transport dominant regime with $\vareps/(\sigma_m h)=O(1)$,
the stability region for the IMEX$k$-DG$k$-S is under a straight line $\beta=\beta_k$. In other words, in the transport dominant regime, the scheme is conditionally stable under a standard  hyperbolic type CFL condition
$$
\beta = \log_{10}{\frac{\Dt}{\vareps h}}\leq\beta_k\Leftrightarrow \Dt\leq \widehat{C}_k\vareps h.
$$
\item[3.)] The IMEX$k$-DG$k$-S scheme is stable under the condition  $\beta\leq \mathcal{F}_k(\alpha)$, with some function $\mathcal{F}_k$. Based on this, we can further derive the stability condition $\Dt\leq \widetilde{\mathcal{F}}_k(\vareps,h,\sigma_m)$. The time step condition for the IMEX$k$-DG$k$-S schemes with $k=2,3$ in Section \ref{sec:numerical} is actually obtained through such  procedure.
\item[4.)] The Fourier results for IMEX1-DG$1$-S scheme match well with the energy analysis results.
\end{enumerate}

We want to mention that the stability properties of the IMEX-DG-S schemes are qualitatively  similar to that for the IMEX-LDG schemes in \cite{peng2019stability} with the numerical weight $\omega=\exp(-\frac{\varepsilon}{\sigma_m h})$.

\begin{figure}
  \centering 
   \subfigure[IMEX$1$-DG$1$-S]
    { \includegraphics[width=0.31\linewidth]{./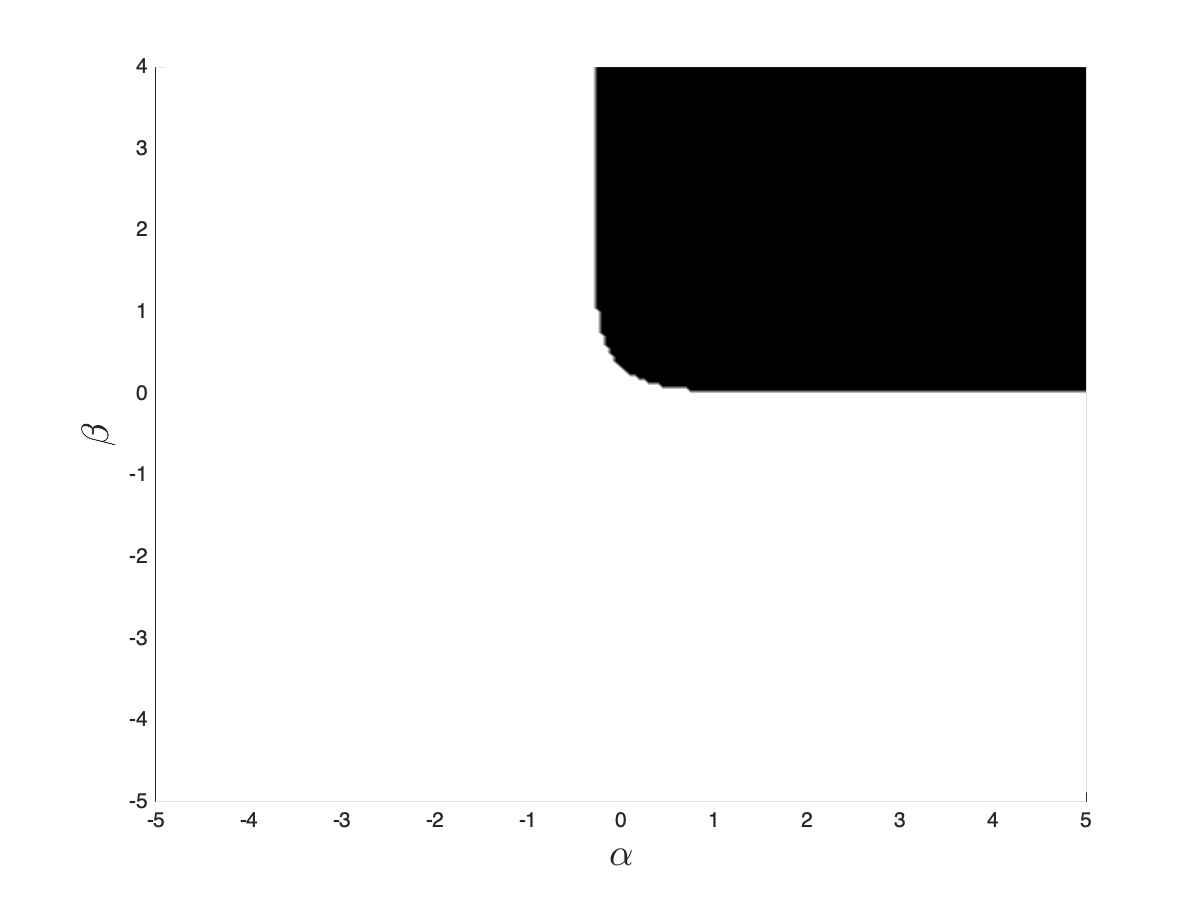} }
    \subfigure[IMEX$2$-DG$2$-S ]
  {    \includegraphics[width=0.31\linewidth]{./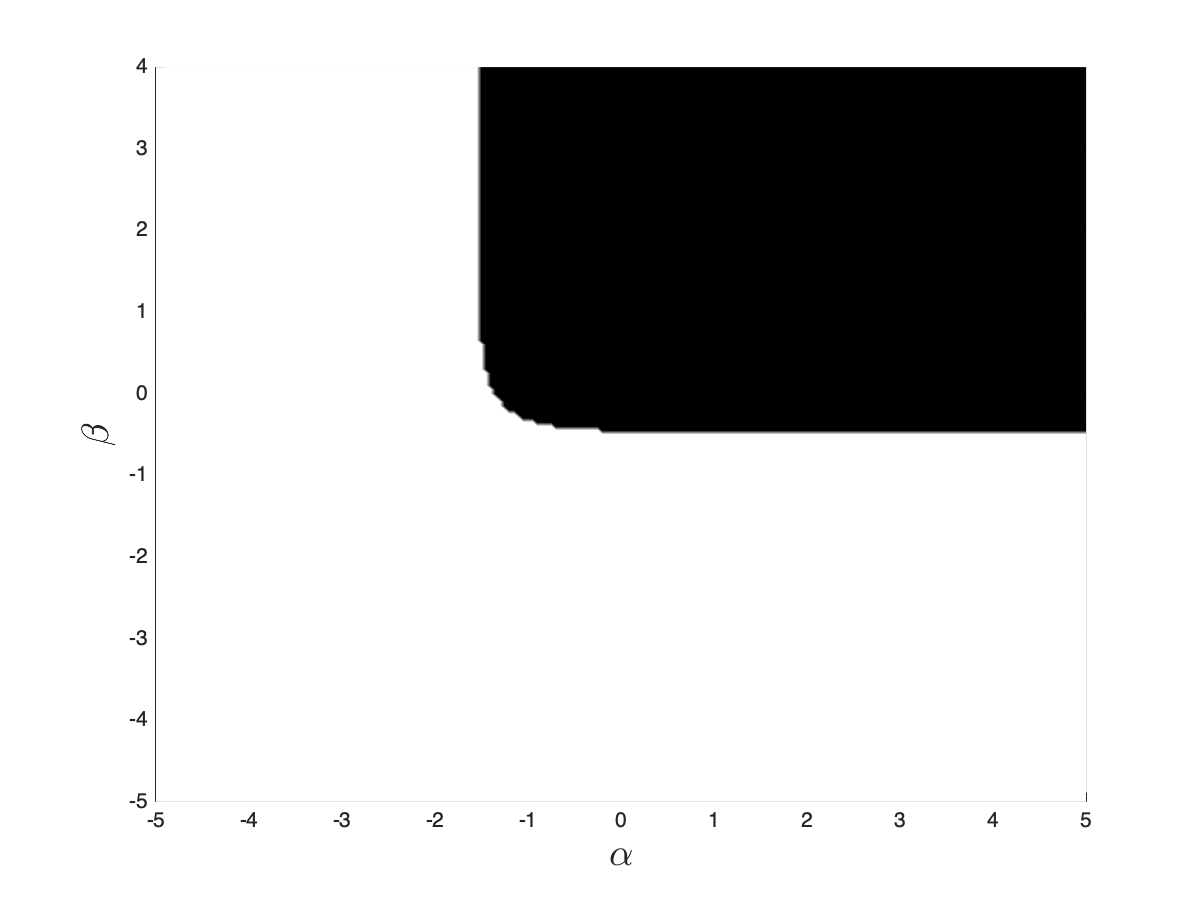}}   
  \subfigure[IMEX$3$-DG$3$-S]
 {      \includegraphics[width=0.31\linewidth]{./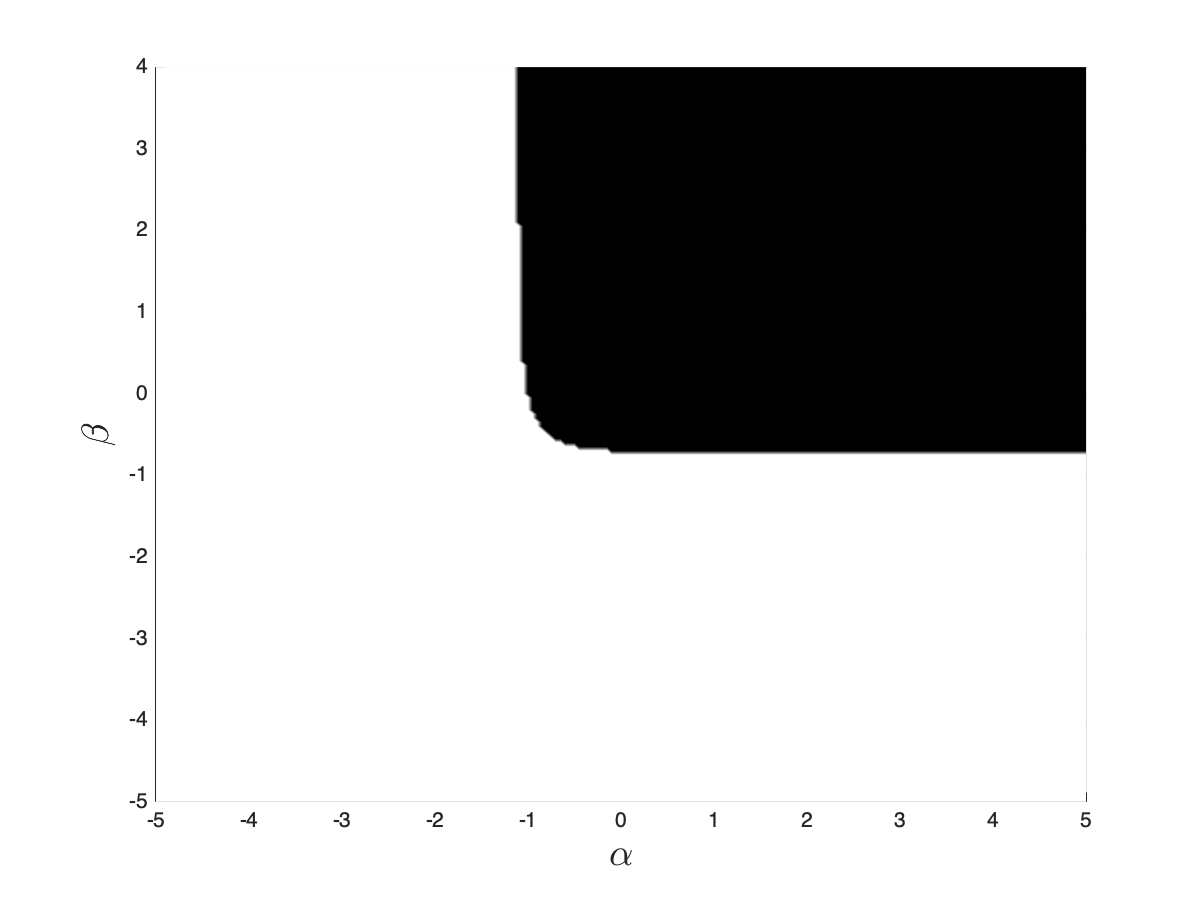}      }   
  \caption{Stability regions of the IMEX$k$-DG$k$-S methods, $k=1, 2, 3$. White: stable; black: unstable. $\alpha=\log_{10}(\frac{\vareps}{\sigma_m h})$ and $\beta=\log_{10}(\frac{\Dt}{\vareps h}).$}
  \label{fig:fourier}
\end{figure}

\section{Numerical tests}
\label{sec:numerical}

In this section, we will demonstrate the performance of the IMEX$k$-DG$k$-S scheme, $k=1, 2, 3$. Two models will be considered. One is the telegraph equation with $\Omega_v=\{-1,1\}$ and $\lgl f \rgl = \frac{1}{2}(f|_{v=1}+f|_{v=-1})$, the other is the one-group transport equation in slab geometry with $\Omega_v=[-1,1]$ and $\lgl f\rgl = \frac{1}{2}\int_{-1}^1 fdv$. For the latter, we discretize the velocity space 
with $16$ Gaussian quadrature points. The meshes in space are uniform unless otherwise specified.

Based on the energy and Fourier analysis in Section \ref{sec:stability}, for the one-group transport equation in slab geometry, the time step size $\Dt$ for the IMEX$k$-DG$k$-S scheme is chosen as $\Dt_{CFLk}$, 
\begin{subequations}
\label{eqn:CFL}
\begin{align}
\mbox{IMEX1-DG1-S}: \hspace{0.2in} &  \Delta t_{CFL1}=\left\{
\begin{array}{ll} 0.75 h,& \vareps\leq  0.5h, \\
\min(0.75 h ,  \frac{\vareps^2 h }{\vareps-0.5h}),&  \vareps>  0.5h,
\end{array}
\right.\\
\mbox{IMEX2-DG2-S}: \hspace{0.2in} &  \Delta t_{CFL2}=\left\{
\begin{array}{ll} 0.75 h,& \vareps\leq  0.025h,\\
\min(0.75h,\frac{\vareps^2 h/\sqrt{10} }{\vareps-0.025h}), & \vareps\geq 0.025h,
\end{array}
\right.\label{eqn:CFL:2}\\
\mbox{IMEX3-DG3-S}: \hspace{0.2in} &  \Delta t_{CFL3}=\left\{
\begin{array}{ll} 0.75 h,& \vareps\leq  0.05h,\\
\min(0.75 h ,  \frac{0.1\vareps^2 h }{\vareps-0.05h}),&  \vareps> 0.05h.
\end{array}
\right.
\end{align}
\end{subequations}
When the schemes are unconditionally stable, $\Dt=0.75 h$ is used to ensure good resolution. The time step conditions in \eqref{eqn:CFL} also work well for the telegraph equation.
We want to mention that, when boundary conditions are Dirichlet (see next subsection), due to the numerical boundary treatment, a smaller time step size is taken for the second order IMEX2-DG2-S scheme in the diffusive regime.
For the linear solver, we apply the Schur complement discussed in Section \ref{sec:matrix-vector} and GMRES \cite{saad1986gmres} solver, which is implemented under the framework of C++ library PETSC \cite{balay2019petsc}. 

\subsection{Numerical boundary condition}\label{sec:numerical_bc}

For some numerical tests, the following inflow (also Dirichlet) boundary conditions are given:
$$ f(x_L,v,t) = f_L(v,t),\;\;v\geq0\quad\text{and}\quad f(x_R,v,t)=f_R(v,t),\;\;v\leq 0.$$ 
These conditions are insufficient to define the boundary conditions for $\rho=\lgl f\rgl$ and $g$, hence numerical treatments are needed. We here adopt a close-loop strategy similar to \cite{jang2015high,peng2019stability}.
For simplicity, we present the strategy using the one-group transport equation in slab geometry with the velocity space being continuous. In implementation, we substitute integrals in $\Omega_v$ with their discrete counterparts. 

\medskip
\noindent\textbf{Main idea:} Our numerical boundary treatment is based on the following idea. At the left boundary, we set
\begin{subequations}
\label{eq:close-loop}
\begin{align}
&\rho_L(t)+\vareps g_L(v, t)=f_L(v,t), \quad v\geq0 \quad\text{(inflow)},
\label{eq:close-loop1}\\
&\rho_L(t)+\vareps g_L(v,t)=\rho_h(x_{\frac{1}{2}}^{+}, t)+\vareps g_h(x_{\frac{1}{2} }^{+},v, t), \quad v<0\quad\text{(outflow)},
\label{eq:close-loop2}\\
&\lgl g_L(v, t)\rgl=0.
\label{eq:close-loop3}
\end{align}
\end{subequations}
Integrate \eqref{eq:close-loop1} in $v$ from $0$ to $1$ and \eqref{eq:close-loop2} from $-1$ to $0$, and sum them up, we get
\begin{subequations}
\begin{align}
&\rho_L=\rho_L(t) = \frac{1}{2}\Big(\rho_h(x_{\frac{1}{2}}^{+}, t)+\int_{0}^{1}f_L(v,t)dv+\vareps\int_{-1}^0 g_h(x_{\frac{1}{2} }^{+},v, t)dv\Big),\\
&g_L=g_L(v,t) = \begin{cases} 
		\frac{1}{\vareps}( f_L(v,t) - \rho_L(t) ),\;\;v\geq 0,\\
		\frac{1}{\vareps}(\rho_h(x_{\frac{1}{2}}^{+}, t)+\vareps g_h(x_{\frac{1}{2} }^{+},v, t)-\rho_L(v,t) ),\;\;v<0.
		\end{cases}
\end{align}
\end{subequations}
With a similar idea, we obtain at  the right boundary
\begin{subequations}
\begin{align}
&\rho_R=\rho_R(t) = \frac{1}{2}\Big(\rho_h(x_{N+\frac{1}{2}}^{-}, t)+\int_{-1}^{0}f_R(v,t)dv+\vareps\int_{0}^1 g_h(x_{N+\frac{1}{2} }^{-},v, t)dv\Big),\\
&g_R=g_R(v,t) = \begin{cases} 
		\frac{1}{\vareps}(\rho_h(x_{N+\frac{1}{2}}^{-}, t)+\vareps g_h(x_{N+\frac{1}{2} }^{-},v, t)-\rho_R(v,t) ),\;\;v\geq 0,\\
		\frac{1}{\vareps}( f_R(v,t) - \rho_R(t) ),\;\;v<0.
		\end{cases}
\end{align}
\end{subequations}

\noindent\textbf{Numerical flux:} The boundary strategies are imposed through numerical fluxes. On boundaries, we modify numerical fluxes in \eqref{eq:flux} to be
\begin{subequations}
\begin{align}
&(\breve{\rho_h})_{\frac{1}{2}}:=\rho_L,\;\;\widehat{\lgl vg_h\rgl}_\frac{1}{2}:= {\lgl vg_h\rgl}_{\frac{1}{2} }^+,\;\;\\
&(\breve{\rho_h})_{N+\frac{1}{2}}:=\rho_R,\;\;\widehat{\lgl vg_h\rgl}_{N+\frac{1}{2} }:= {\lgl vg_h\rgl}_{N+\frac{1}{2} }^-+c_R(\rho_R-\rho_{N+\frac{1}{2} }^-) ,
\\
&(\widetilde{vg_h})_\frac{1}{2}:=
\left\{
\begin{array}{ll}
{vg_L},&\mbox{if}\; v\geq0\\
({vg_h})_\frac{1}{2}^+,&\mbox{if}\; v<0
\end{array}
\right.
,\quad
(\widetilde{vg_h})_{N+\frac{1}{2} }:=
\left\{
\begin{array}{ll}
({vg_h})_{N+\frac{1}{2}}^-,&\mbox{if}\; v\geq0\\
vg_R,&\mbox{if}\; v<0
\end{array}
\right. . 
\end{align}
\end{subequations}
The penalty term $c_R(\rho_R-\rho_{N+\frac{1}{2} }^-)$ is added to maintain the accuracy of the schemes, and one can refer to \cite{castillo2002optimal,li2017optimal} for details on the role of this penalty term. In our simulation, we take $c_R=1$.

We want to mention that, due to this numerical boundary treatment, the IMEX$2$-DG$2$-S scheme is no longer unconditionally stable in the diffusive regime. We modify the time step condition in \eqref{eqn:CFL} with  $\Dt_{CFL2}=0.1h$ when $\vareps\leq 0.025h$. Note that this time step condition can still be larger than 
a parabolic time step condition $\Dt= O(h^2)$.
\subsection{Numerical examples}

\textbf{Example 1: smooth example \cite{peng2019stability}.}  We consider the one-group transport equation in slab geometry with a smooth example  on $\Omega_x=[0,2\pi]$. The initial conditions are 
$$
\rho(x,0)=\sin(x),\qquad
g(x,v,0)=-v \cos(x),
$$
with periodic boundary conditions, and $\sigma_s=1$, $\sigma_a=0$. $\Omega_x$ is partitioned with a uniform mesh and we define $N=\frac{(x_R-x_L)}{h}$. Numerical errors in $L_\infty$-norm and convergence orders are obtained by Richardson extrapolation:
\begin{subequations}
\begin{align} 
&E^{\rho}_{N}=|| \rho_h(x,T)-\rho_{\frac{h}{2}}(x,T) ||_{L_{\infty} (\Omega_x)},\quad\text{and},\quad O^{\rho}_{N}=log_{2}(E^{\rho}_N/E^{\rho}_{2N}),\\
&E^{g}_{N}=\max_{j=1,\dots N_v}|| g_h(x,v_j,T)-g_{\frac{h}{2}}(x,v_j,T) ||_{L_{\infty} (\Omega_x)},\quad\text{and},\quad O^{g}_{N}=log_{2}(E^{g}_N/E^{g}_{2N}).
\end{align}
\end{subequations}
Numerical results at $T=1$ with $\vareps=0.5$, $\vareps=10^{-2}$ and $\vareps=10^{-6}$ are shown in Tables \ref{tab:accuracy1}-\ref{tab:accuracy3}.
 We observe that the IMEX$k$-DG$k$-S scheme, $k=1, 2, 3$, has the optimal $k$-th order accuracy that seems to be uniform in 
$\vareps$.

\begin{table}[htbp]
  \centering
  \caption{Errors and convergence orders for the example 1, IMEX1-DG1-S}
  \vspace{0.1in}
    \begin{tabular}{|c|c|c|c|c|c|c|c|c|c|c|}
    \hline
	$\vareps$& $N$ & $E^{\rho}_N$  & order  &$E_N^g$ &order \\
    \hline
     \multirow{5}{*}{0.5} & 10  &1.921E-02& - & 1.909E-02&- \\
     					       & 20 & 8.709E-02&1.14 & 8.390E-03 & 1.19\\
					       & 40 & 3.540E-02 & 1.30 & 3.699E-03& 1.18\\
     					       & 80 & 1.619E-03& 1.13 &  1.744E-03& 1.08\\
					       & 160 &7.737E-04& 1.07 & 8.469E-04&1.04\\ \hline
     \multirow{5}{*}{$10^{-2}$} & 10  &1.029E-02& - & 1.467E-02 &-\\
     					       & 20 & 4.371E-03& 1.23 & 6.742E-03 &1.12\\
					       & 40 & 2.014E-03 & 1.12 & 3.204E-03& 1.07\\
     					       & 80 & 9.648E-04& 1.06 & 1.553E-03& 1.04\\
					       & 160 &5.265E-04& 0.87 & 7.986E-04&0.96\\ \hline
     \multirow{5}{*}{$10^{-6}$} & 10  & 1.011E-02 & - &  1.459E-02& - \\
     					       & 20 & 4.306E-03 & 1.23 & 6.709E-03  & 1.12 \\
					       & 40 &  1.988E-03 & 1.12 &3.189E-03&  1.07 \\
     					       & 80 & 9.520E-04& 1.06 & 1.546E-03& 1.04\\
					       & 160 &4.657E-04& 1.03 & 7.618E-04 &1.02  \\ \hline
					       
          \end{tabular}
          \label{tab:accuracy1}
\end{table}

\begin{table}[htbp]
  \centering
  \caption{Errors and convergence orders for the example 1, IMEX2-DG2-S}
    \vspace{0.1in}
    \begin{tabular}{|c|c|c|c|c|c|c|c|c|c|c|}
    \hline
	$\vareps$& $N$ & $E^{\rho}_N$  & order  &$E_N^g$ &order \\
    \hline
     \multirow{5}{*}{0.5} & 10  &3.505E-02& - & 3.911E-02 &- \\
     					       & 20 & 8.916E-03&1.97 & 9.991E-03 &1.97\\
					       & 40 & 2.205E-03 & 2.02 & 2.590E-03& 1.95\\
     					       & 80 & 5.479E-04& 2.01 &  6.563E-04& 1.98\\
					       & 160 &1.365E-04& 2.01 & 1.650E-04&1.99\\ \hline
     \multirow{5}{*}{$10^{-2}$} & 10  &3.519E-02& - & 4.215E-02 &-\\
     					       & 20 & 8.763E-03& 2.01 & 8.869E-03 &2.25\\
					       & 40 & 2.206E-03 &2.00 & 2.283E-03& 1.96\\
     					       & 80 & 5.523E-04& 2.00 & 5.906E-04& 1.95\\
					       & 160 &1.381E-04&2.00 & 1.536E-04&1.94\\ \hline
     \multirow{5}{*}{$10^{-6}$} & 10  & 3.518E-02 & - &  3.482E-02&- - \\
     					       & 20 & 8.726E-03 & 2.01 & 8.629E-03  & 2.01 \\
					       & 40 & 2.195E-03 & 1.99 &2.172E-03&  1.99 \\
     					       & 80 & 5.494E-04& 2.00 & 5.435E-04& 2.00\\
					       & 160 &1.374E-04& 2.00 & 1.360E-04 &2.00  \\ \hline
					       
          \end{tabular}
          \label{tab:accuracy2}
\end{table}

\begin{table}[htbp]
  \centering
  \caption{Errors and convergence orders for the example 1, IMEX3-DG3-S}
    \vspace{0.1in}
    \begin{tabular}{|c|c|c|c|c|c|c|c|c|c|c|}
    \hline
	$\vareps$& $N$ & $E^{\rho}_N$  & order  &$E_N^g$ &order \\
    \hline
     \multirow{5}{*}{0.5} 	       & 10  &2.588E-03& - & 2.676E-03 &- \\
     					       & 20 & 3.215E-04&  3.01 & 4.103E-04 &2.71\\
					       & 40 & 4.028E-05 & 3.00 & 6.495E-05& 2.66\\
     					       & 80 & 5.036E-06&  3.00 & 9.198E-06& 2.82\\
					       & 160 &6.303E-07& 3.00 & 1.22E-06&   2.91\\ \hline
     \multirow{5}{*}{$10^{-2}$}   & 10  &2.510E-03& - & 2.543E-03 &-\\
     					       & 20 & 3.214E-04& 2.97 & 3.724E-04 &2.77\\
					       & 40 & 4.039E-05 & 2.99 & 1.109E-04& 1.75\\
     					       & 80 & 5.061E-06& 3.00 & 5.292E-06& 4.39\\
					       & 160 &6.328E-07& 3.00 & 6.659E-07&2.99\\ 
					       & 320 &7.910E-08& 3.00 & 8.355E-08&2.99\\ 
					       \hline
     \multirow{5}{*}{$10^{-6}$}   & 10  & 2.505E-03 & - &  2.554E-03&- \\
     					       & 20 & 3.211E-04 & 2.96 & 3.174E-04  & 3.01 \\
					       & 40 & 4.041E-05 & 2.99 &3.998E-05&  3.00 \\
     					       & 80 & 5.060E-06& 3.00 & 5.007E-06& 3.00\\
					       & 160 &6.327E-07& 3.00 &  6.269E-07 &3.00  \\ \hline
					       
          \end{tabular}
          \label{tab:accuracy3}
\end{table}

\smallskip
\textbf{Example 2: two-material problem \cite{larsen1989asymptotic,lemou2008new}.} We consider a two-material problem on $\Omega_x=[0, 11]$ and $\Omega_v=[-1, 1]$ with isotropic inflow boundary conditions. The setup is as follows.
\begin{align}
&\sigma_s=0,\;\;\sigma_a=1,\;\;\text{if}\; x\in[0,1],\notag\\
&\sigma_s=100,\;\;\sigma_a=0,\;\;\text{if}\;x\in[1,11],\notag\\
&f_L(v,t) = 5,\;\; f_R(v,t)=0,\;\;f(x,v,0) = 0,\;\;
\end{align}
and $\vareps=1$. 
We examine the numerical solutions at a shorter time 
$T=1.5$ and also the steady state solution obtained at $T=20000$. This problem has a pure absorbing region with the length of one mean-free path on the left and a pure scattering region with the length of $1000$ mean-free path on the right. Subregiones with different scales coexist. 
From the left boundary, an isotropic inflow enters the computational region, and it instantly becomes anisotropic. An interior layer is formed near the interface between the absorbing region and the scattering region. 

We use a non-uniform mesh with $h=h^{(1)}=\frac{1}{20}$ on $[0,1]$ and $h=h^{(2)}=\frac{1}{2}$ on $[1,11]$, and the time step $\Dt$ is determined by  \eqref{eqn:CFL} using $h=h^{(1)}$. With this example, we also want to compare the performance of  the IMEX$k$-DG$k$-S schemes proposed here and the IMEX$k$-LDG$k$ schemes in \cite{peng2019stability} with the weight function $\omega = \exp(-\frac{\varepsilon}{100h})|_{h=h^{(1)}}$.
Numerical results are shown in Figure \ref{fig:two_material_short} and Figure \ref{fig:two_material_long}.  The reference solution is obtained by  the first order forward Euler upwind finite difference scheme applied to the original kinetic equation \eqref{eq:f_model} with $h=\frac{11}{20000}$ and $\Dt=10^{-5}$ for $T=1.5$, and with $h=\frac{11}{2000}$ and $\Dt=10^{-4}$ for $T=20000$.

At $T=1.5$, our proposed schemes capture the solutions very well. The third order IMEX3-LDG3-S scheme has the best result.  We observe that the IMEX$k$-DG$k$-S schemes outperform the IMEX$k$-LDG$k$ schemes with the chosen weight. Unlike for IMEX$k$-LDG$k$ schemes, one does not need to choose a weight function for our proposed methods for this example when both transport dominant and diffusion dominant regions coexist.

At $T=20000$ when the solution reaches its steady state, the numerical solutions by the IMEX$k$-DG$k$-S scheme, $k=1, 2, 3$ match the reference solutions well, and they are comparable with those in \cite{peng2019stability} by  IMEX$k$-LDG$k$ scheme. Higher order schemes lead to better resolution as expected.

\begin{figure}
  \centering
  \subfigure[$\rho$, IMEX$1$-DG$1$-S]
  {
     \includegraphics[width=0.31\linewidth]{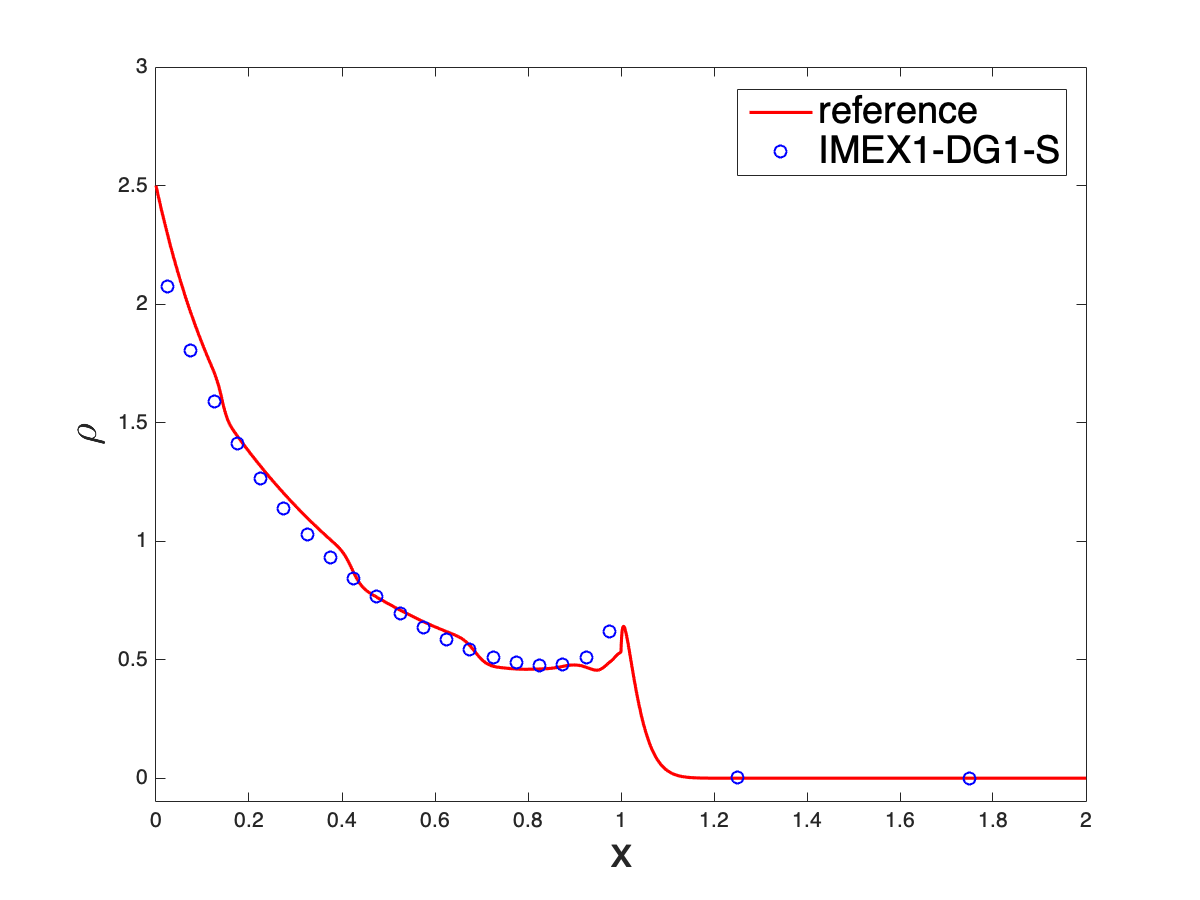}
    }
  \subfigure[$\rho$, IMEX$2$-DG$2$-S]
  {
        \includegraphics[width=0.31\linewidth] {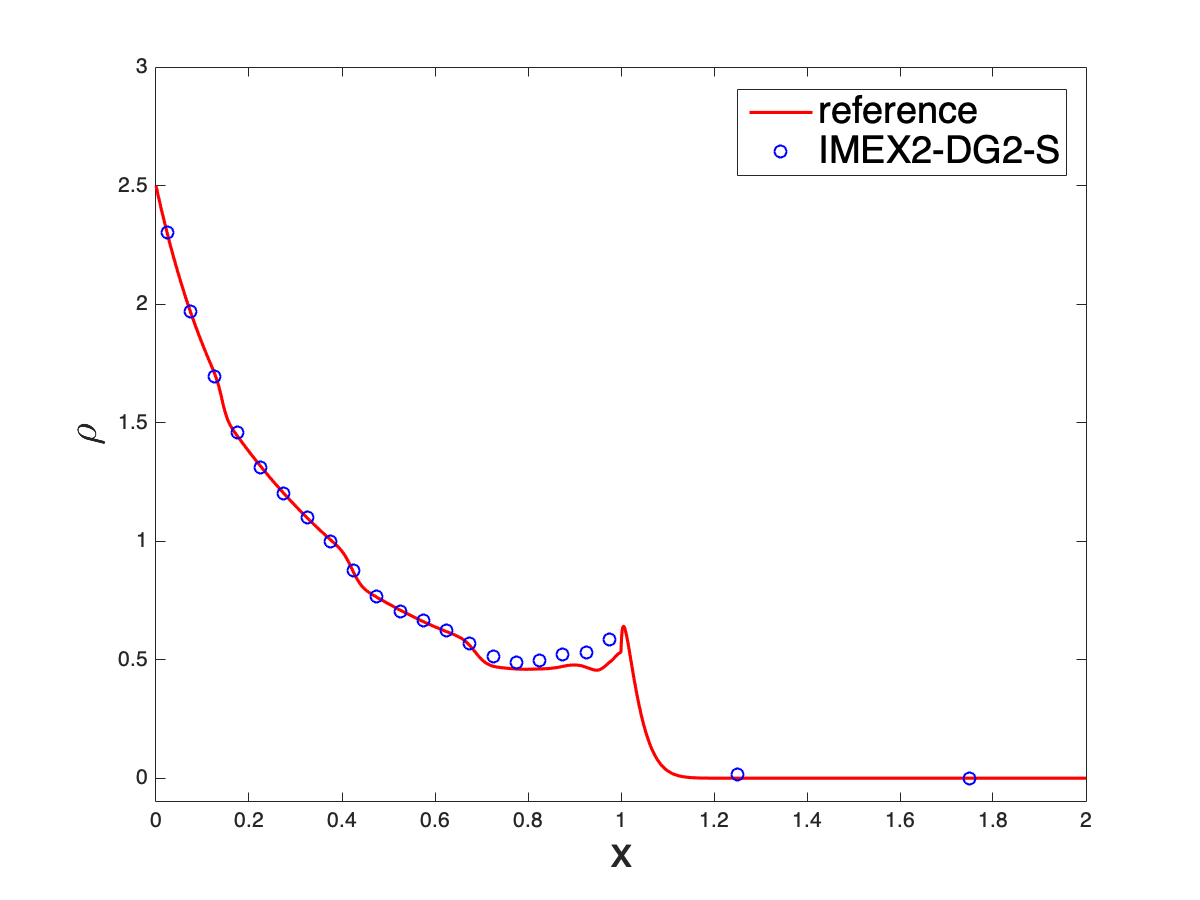}
    }
   \subfigure[$\rho$, IMEX$3$-DG$3$-S]
  {
        \includegraphics[width=0.31\linewidth] {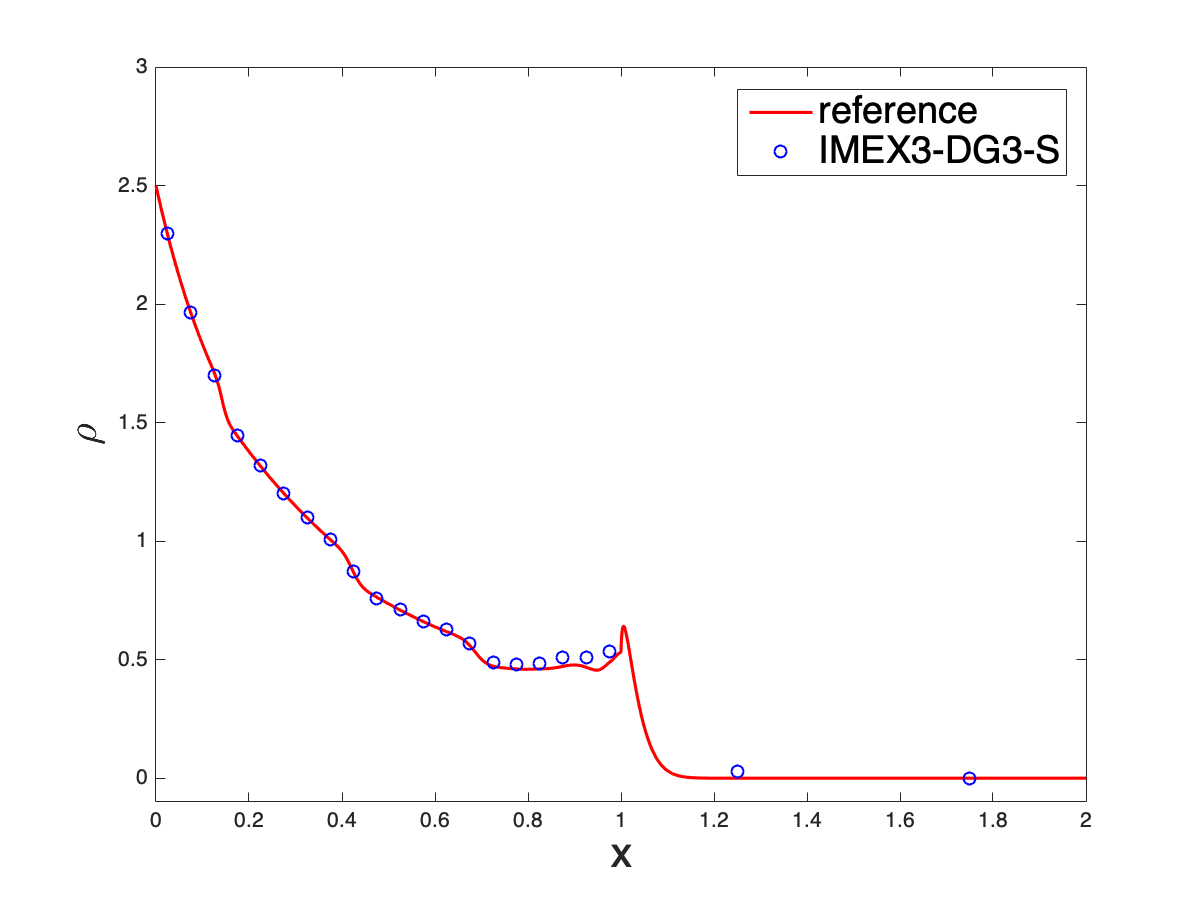}
    }
      \subfigure[$\rho$, IMEX$1$-LDG$1$]
  {
     \includegraphics[width=0.31\linewidth]{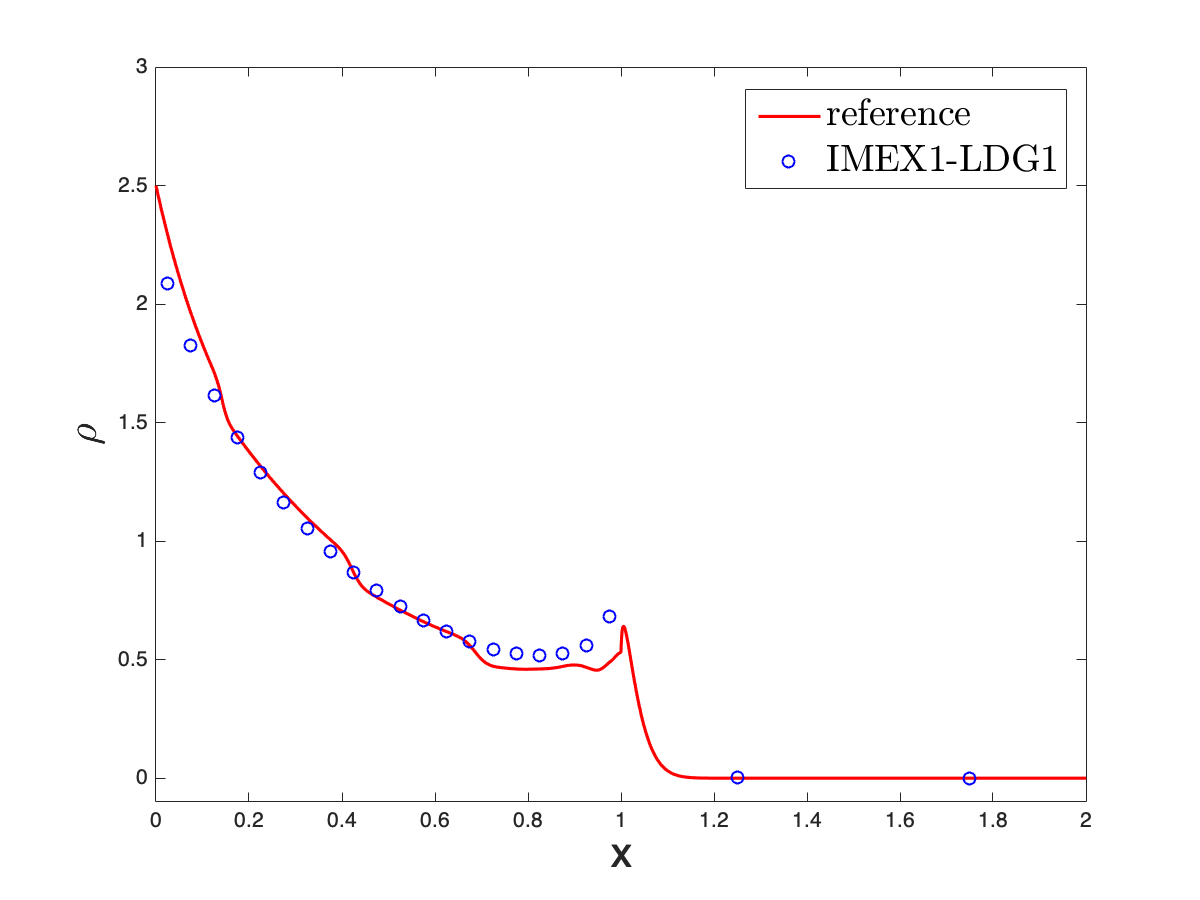}
    }
  \subfigure[$\rho$, IMEX$2$-LDG$2$]
  {
        \includegraphics[width=0.31\linewidth] {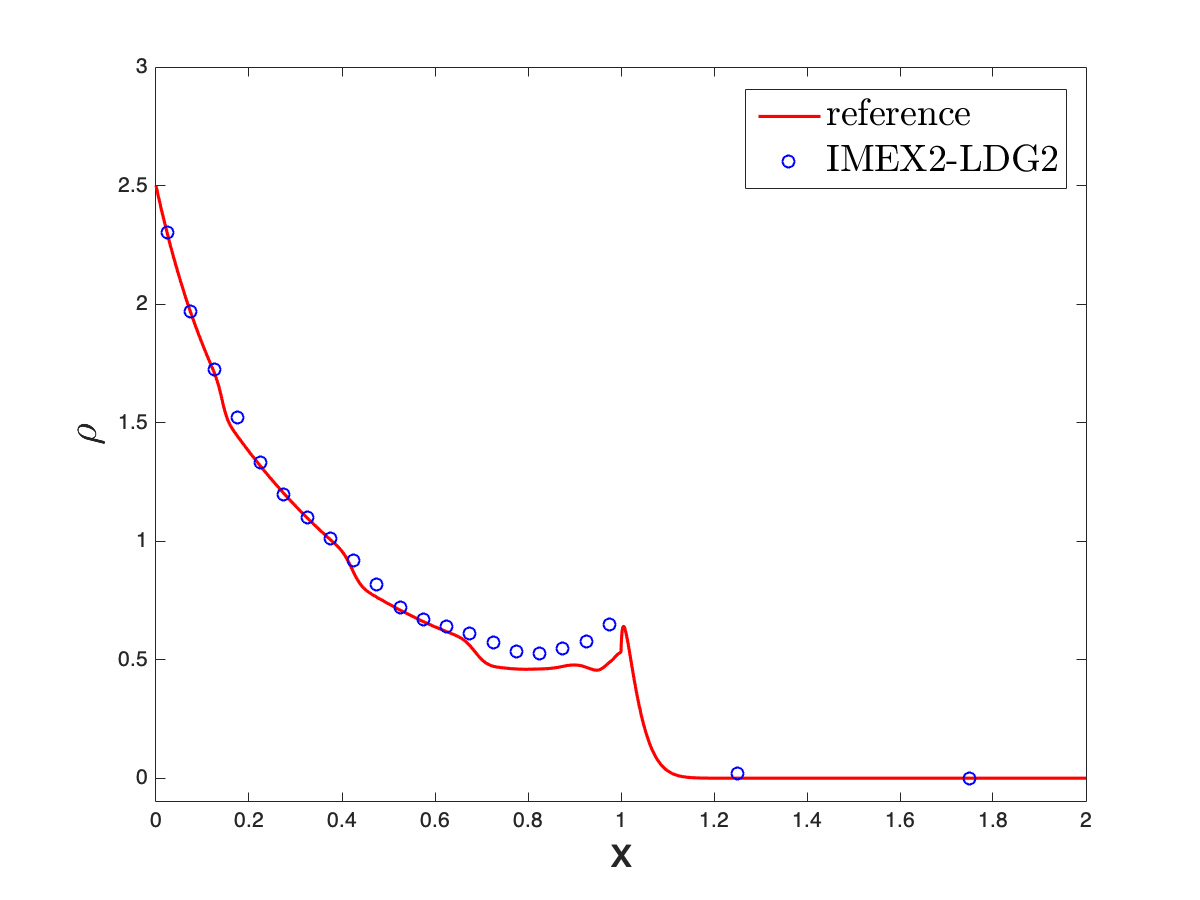}
    }
   \subfigure[$\rho$, IMEX$3$-LDG$3$]
  {
        \includegraphics[width=0.31\linewidth] {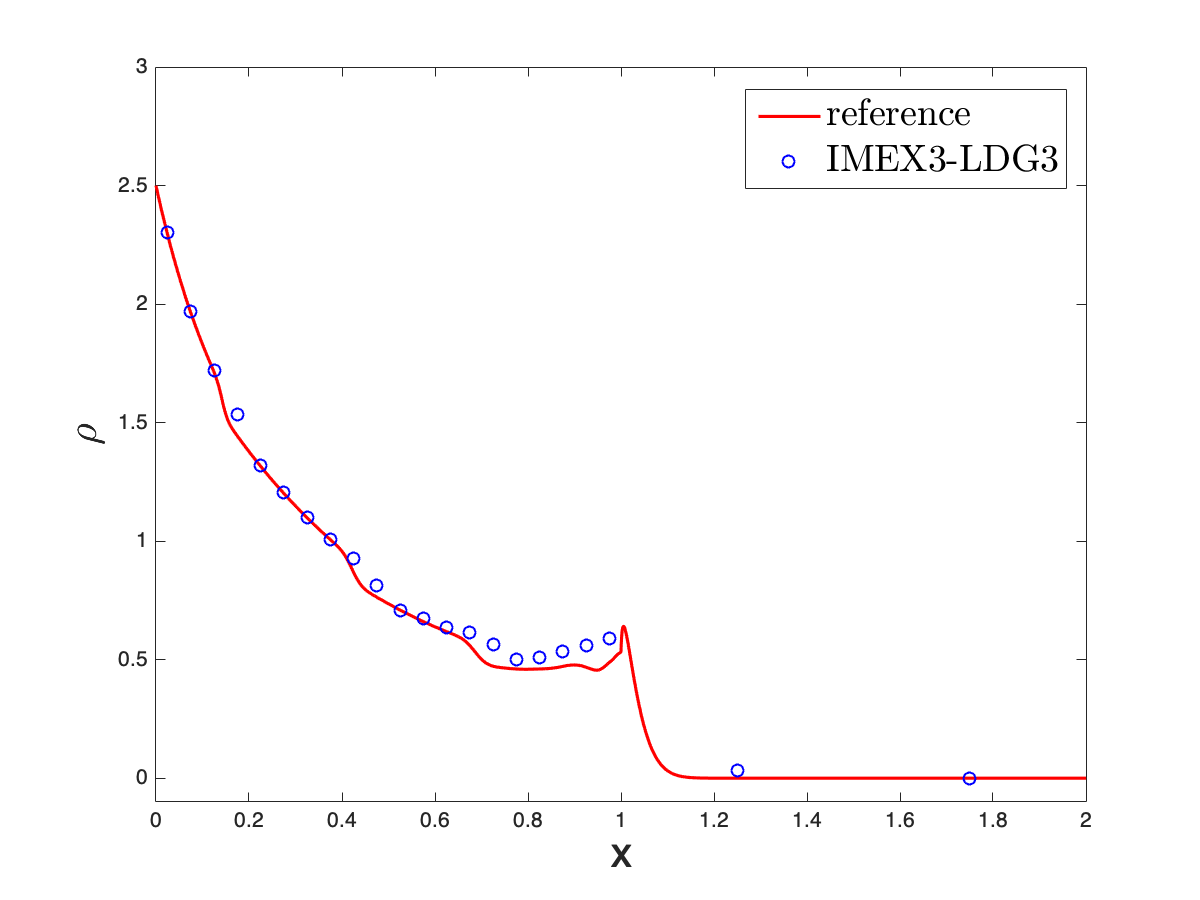}
    }
  \caption{ Example 2: two-material problem
  $T=1.5$, zoomed in with $x\in[0,2]$. 
  		}
  \label{fig:two_material_short}  
\end{figure}
\begin{figure}
  \centering
  \subfigure[$\rho$, IMEX$1$-DG$1$-S]
  {
     \includegraphics[width=0.31\linewidth]{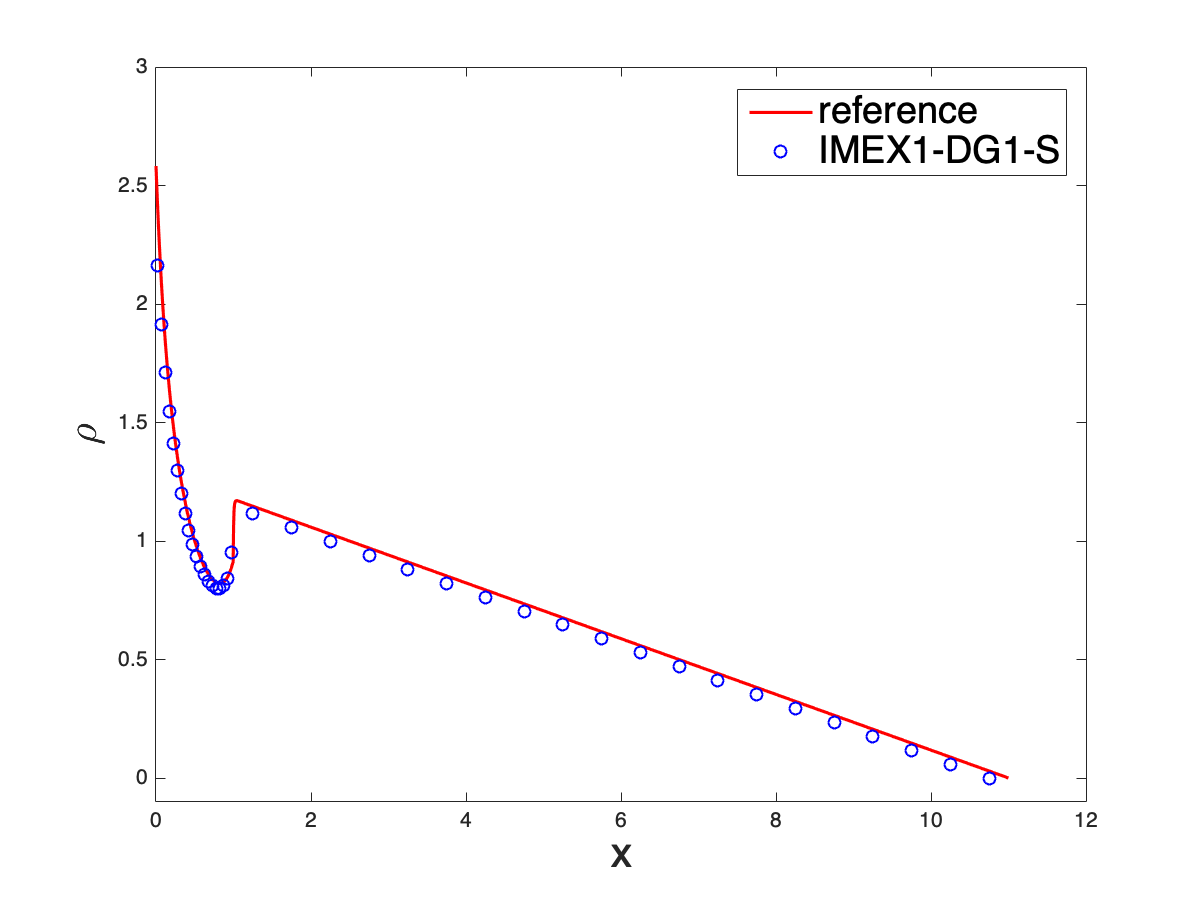}
    }
  \subfigure[$\rho$, IMEX$2$-DG$2$-S]
  {
        \includegraphics[width=0.31\linewidth] {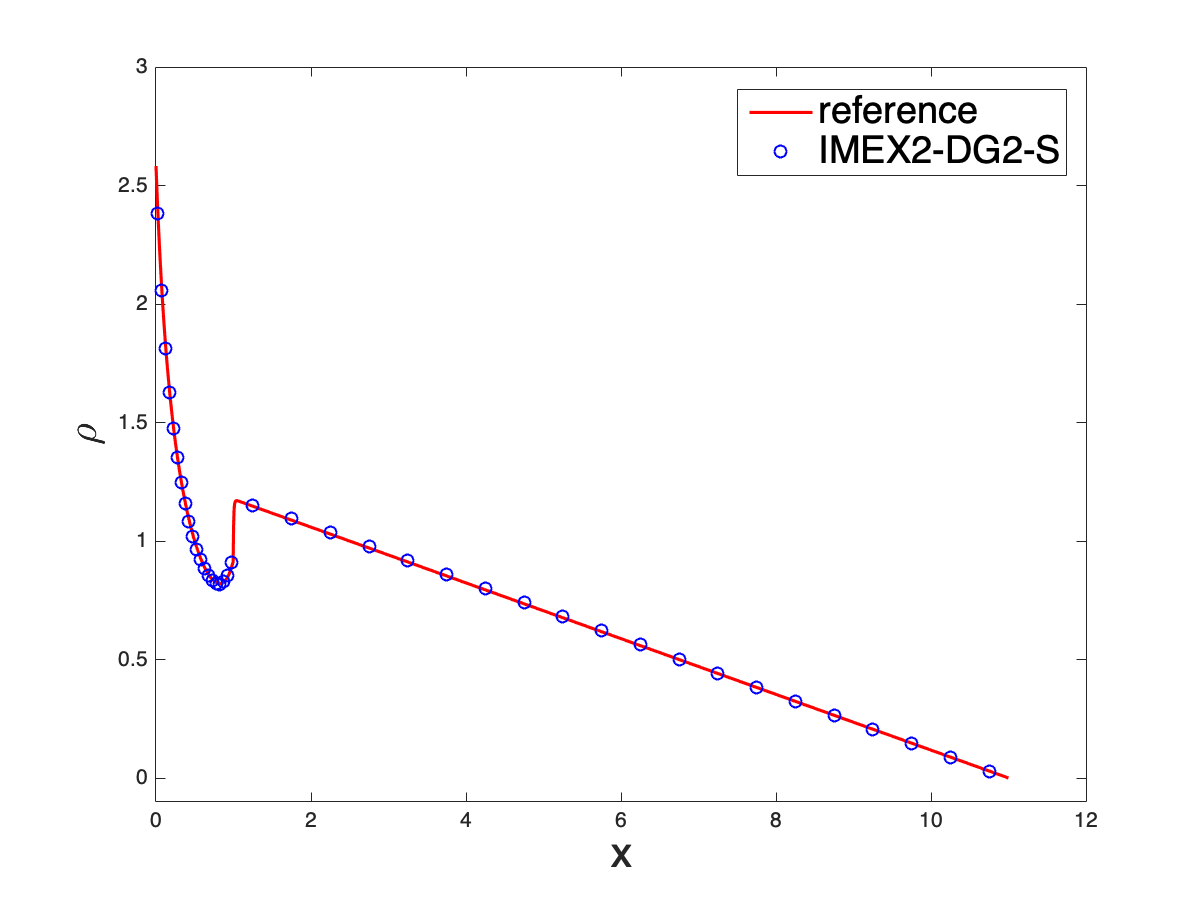}
    }
   \subfigure[$\rho$, IMEX$3$-DG$3$-S]
  {
        \includegraphics[width=0.31\linewidth] {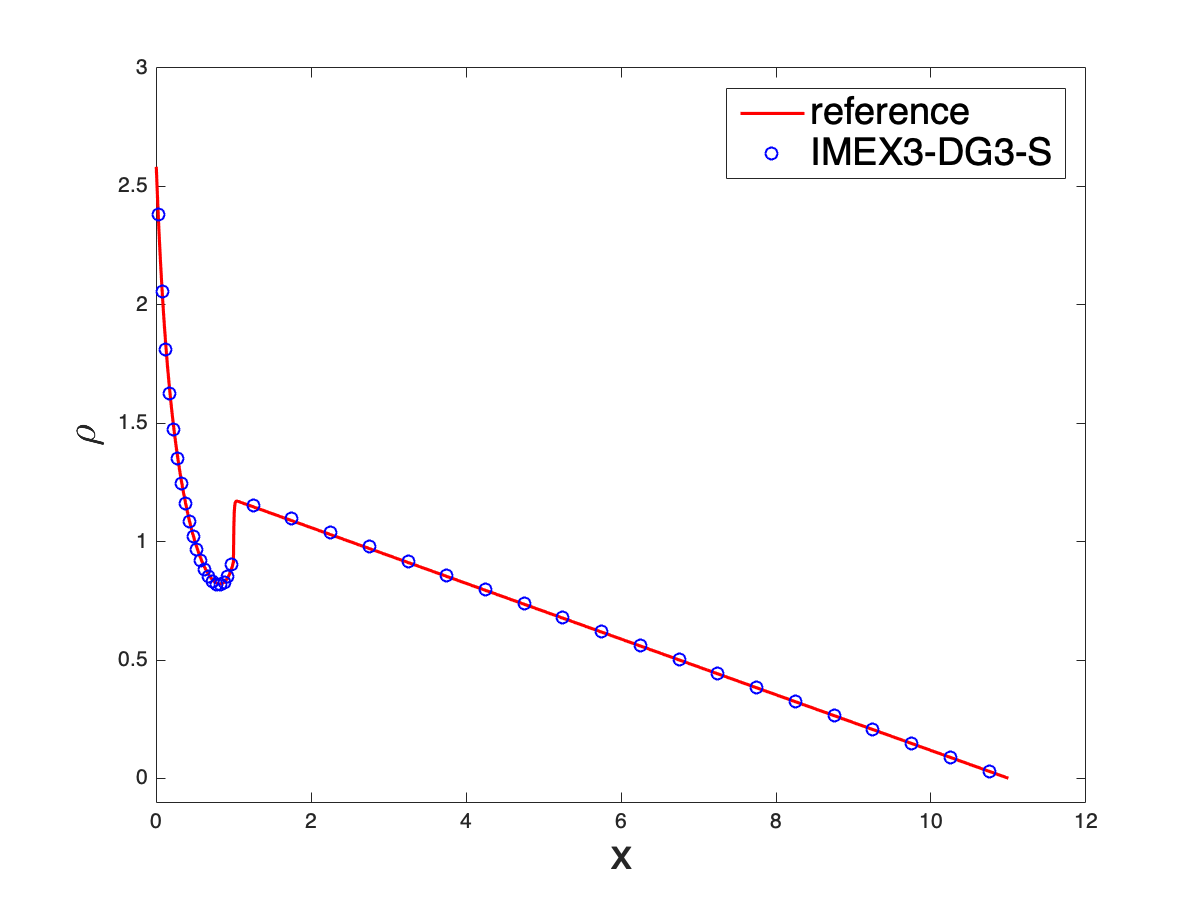}
    }
  \caption{ Example 2: two-material problem,
   $T=20000$, $x\in[0,11]$. 
  		}
  \label{fig:two_material_long}  
\end{figure}

\smallskip
\textbf{Example 3: problem with varying scattering frequency and constant source term \cite{lemou2008new}.} We consider the one-group transport equation in slab geometry 
with a source term $G$:
\begin{align}
\vareps \partial_t f + v\partial_x f = -\frac{\sigma_s}{\vareps}(\lgl f \rgl - f) + \vareps \sigma_a f+\vareps G.
\end{align}
The computational domain is $\Omega_x=[0,1]$, and 
\begin{align}
&\sigma_s(x) = 1+100x^2,\;\;\sigma_a=0,\;\;G=1,\;\;
\notag\\
&f_L(v,t) = 0,\;\; f_R(v,t)=0,\;\; f(x,v,0)=0,\;\;\vareps = 10^{-2}.
\end{align}
The effective scaling is determined by $\frac{\vareps}{\sigma_s(x)}$, hence, it is varying in the computational domain.  

We use a uniform mesh with $h=\frac{1}{40}$ and the source term $G$ 
 is treated explicitly. Numerical results for $T=0.4$ are presented in Figure \ref{fig:changing_scattering}. The reference solution is obtained by the first order forward Euler upwind finite difference scheme applied to \eqref{eq:f_model} with $h=\frac{1}{20000}$ and $\Dt=0.1\vareps h$. As the value of $\sigma_s(x)$ is larger on the right, the scattering effect is stronger on that side. As a result, sharp feature exists near the right boundary. All schemes match the reference solution well on this relatively coarse mesh, and high order schemes perform better, especially near the right boundary.

\begin{figure}
  \centering
  \subfigure[$\rho$, IMEX$1$-DG$1$-S]
  {
     \includegraphics[width=0.31\linewidth]{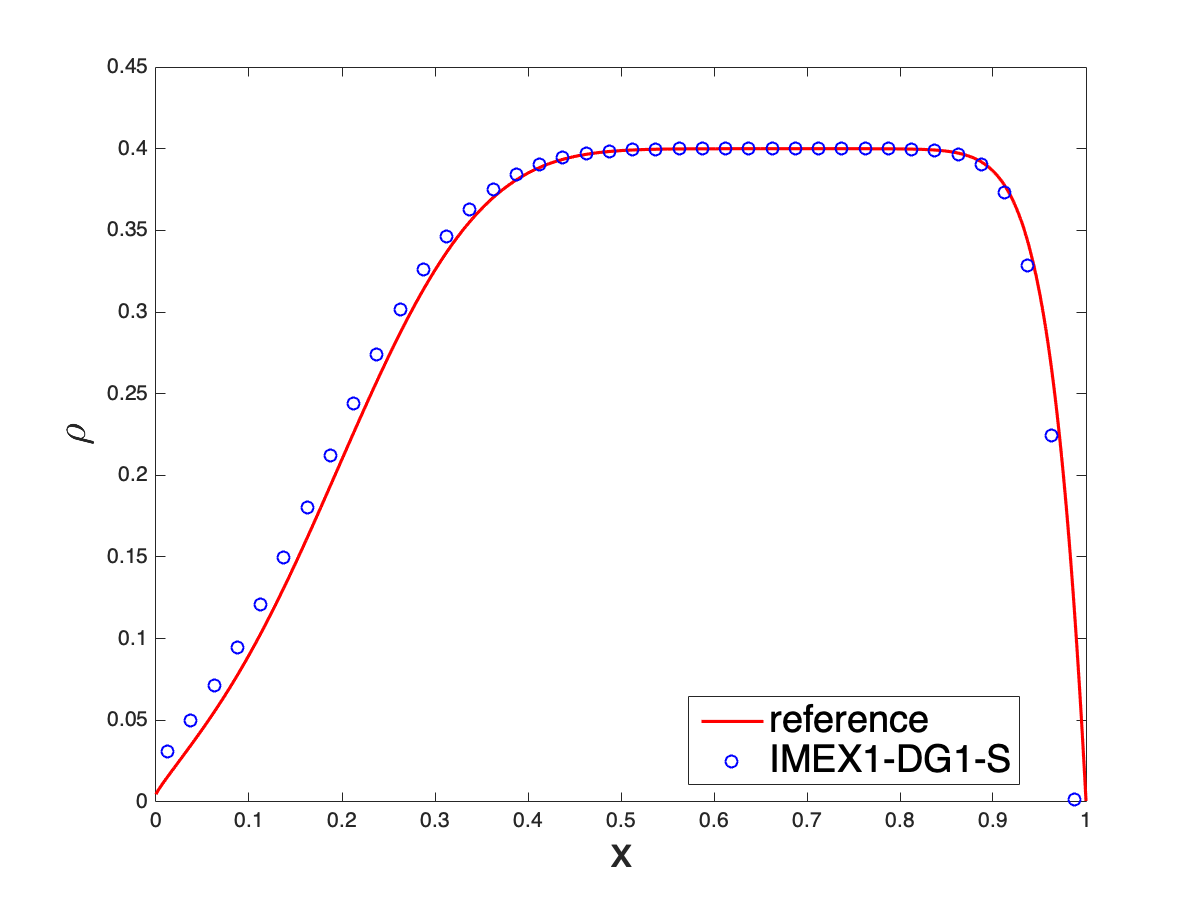}
    }
  \subfigure[$\rho$, IMEX$2$-DG$2$-S]
  {
        \includegraphics[width=0.31\linewidth] {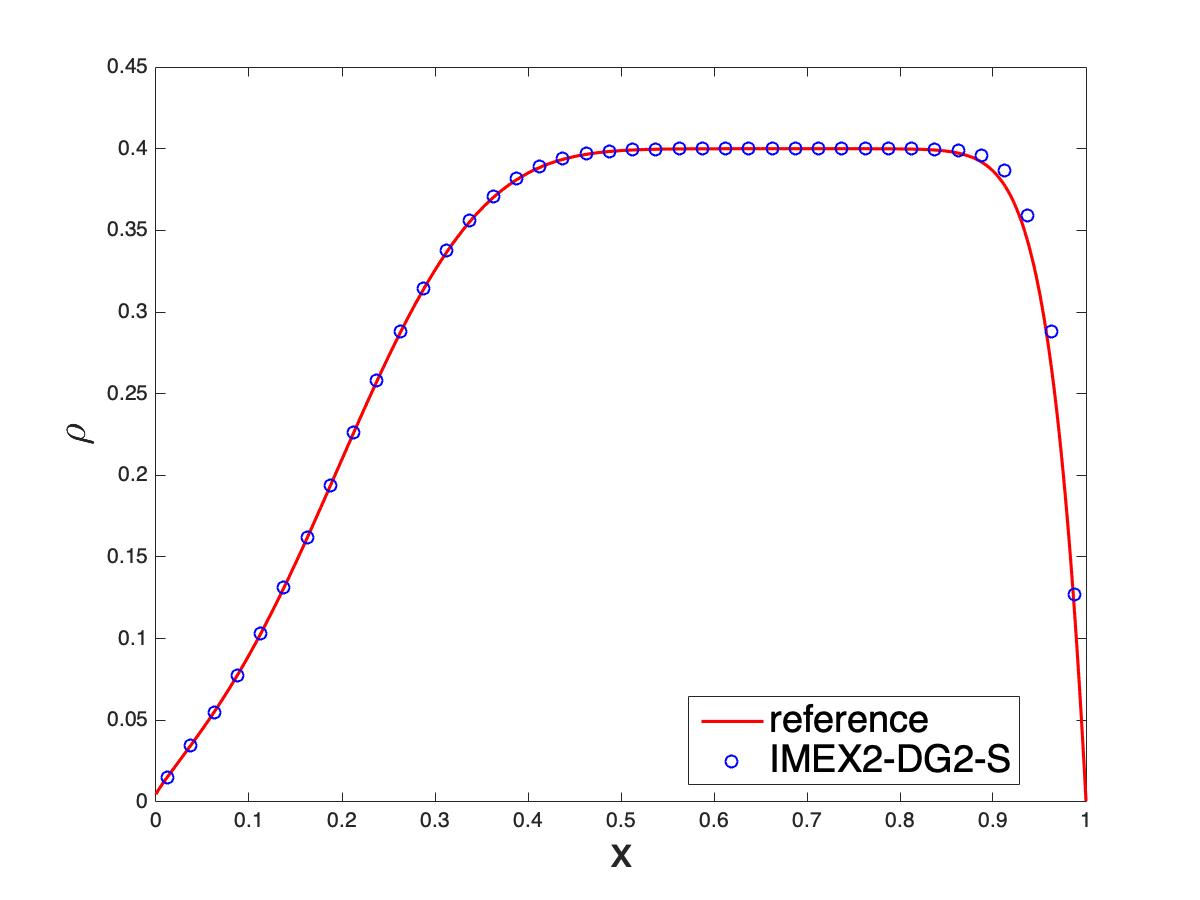}
    }
   \subfigure[$\rho$, IMEX$3$-DG$3$-S]
  {
        \includegraphics[width=0.31\linewidth] {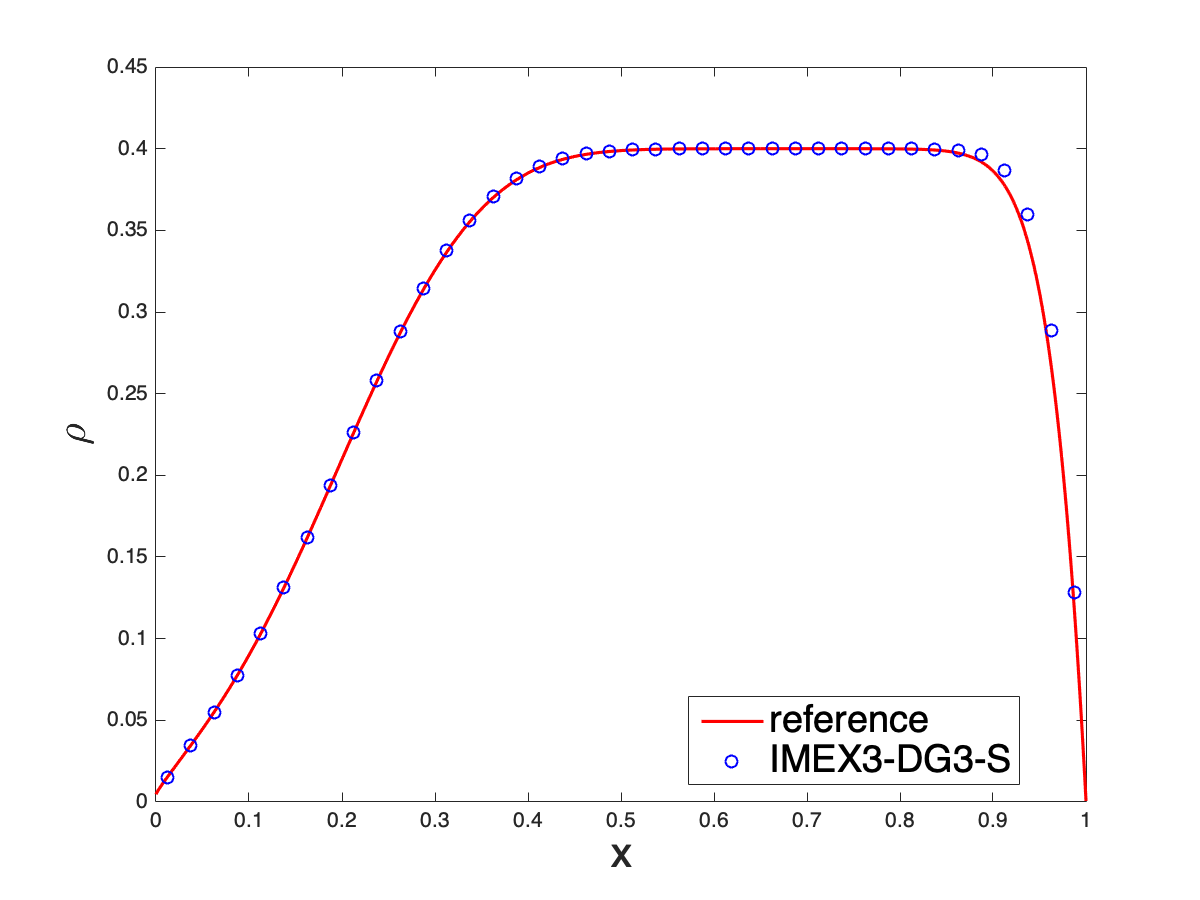}
    }
  \caption{ Example 3: changing scattering problem for one-group transport equation, $T=0.4$. 
  		}
  \label{fig:changing_scattering}  
\end{figure}

\smallskip
\textbf{Example 4: diffusive and kinetic regime with isotropic inflow Dirichlet boundary conditions \cite{boscarino2013implicit,lemou2008new}.} In this example, we consider the one-group transport equation in  slab geometry on  
$\Omega_x=[0,1]$, and 
\begin{align}
\sigma_s=1,\;\;\sigma_a=0,\;\;f_L(v,t)=1,\;\; f_R(v,t)=0,\;\;f(x,v,0)=0,
\end{align}
with $\vareps=1, 10^{-8}$.

In Figure \ref{fig:dirichlet}, we report numerical results on a uniform mesh with $h=\frac{1}{40}$. The reference solution for $\vareps=1$ is obtained by the first order forward Euler upwind finite difference scheme applied to \eqref{eq:f_model} with $h=\frac{1}{2000}$ and $\Dt = 0.5\varepsilon h$, while the reference solution for $\vareps=10^{-8}$ is obtained by a central difference scheme solving the diffusion limit \eqref{eq:diffusion_limit} with $h=\frac{1}{2000}$ and $\Dt = 0.25h^2$.  
For comparison, we also include  in Figure \ref{fig:dirichlet} the numerical results  by  
 the IMEX$k$-LDG$k$ schemes in \cite{peng2019stability}  with the weight function $\omega=\exp(-\frac{\varepsilon}{h})$ and  $\omega=1$, and when $\vareps=1$ .

When the problem is relatively kinetic with $\vareps=1$, it is observed that the numerical solutions by the proposed methods match the reference solutions well. The results are comparable with that by the IMEX$k$-LDG$k$ methods with the weight function $\omega=\exp({-\frac{\varepsilon}{h}})$, and both  are better than that by the IMEX$k$-LDG$k$ methods with the constant  weight function $\omega=1$.
Note that in this example, the initial and boundary conditions at $x=0$ are not compatible, and this introduces a Dirac delta structure in $\partial_x\rho$ at $t=0$ and subsequently sharper features in the solution form   near the left boundary. All these pose challenge to approximate the weighted diffusion term $\omega\partial_{xx}\rho$, unless $\omega$ is chosen to be small to balance the term $\partial_{xx}\rho$. This explains the IMEX$k$-LDG$k$ schemes with the weight function $\omega=\exp(-\frac{\varepsilon}{h})$ outperform that with $\omega=1$. Our IMEX-DG-S schemes on the other hand do not have a weight function to tune for this example.
 
When the problem is relatively diffusive with $\vareps=10^{-8}$, 
 we take $\Dt=0.25h$ in the diffusive regime, instead of the original $\Dt=0.75h$ in \eqref{eqn:CFL} (still stable), for both the IMEX1-DG1-S and IMEX3-DG3-S schemes.
The numerical solutions by the IMEX$k$-DG$k$-S scheme, $k=1, 2, 3$, match the reference solutions well.
The higher order schemes lead to better resolution.

\begin{figure}
  \centering
    \subfigure[$\rho$ for $\vareps=1$, IMEX$1$-DG$1$-S]
  {
     \includegraphics[width=0.31\linewidth]{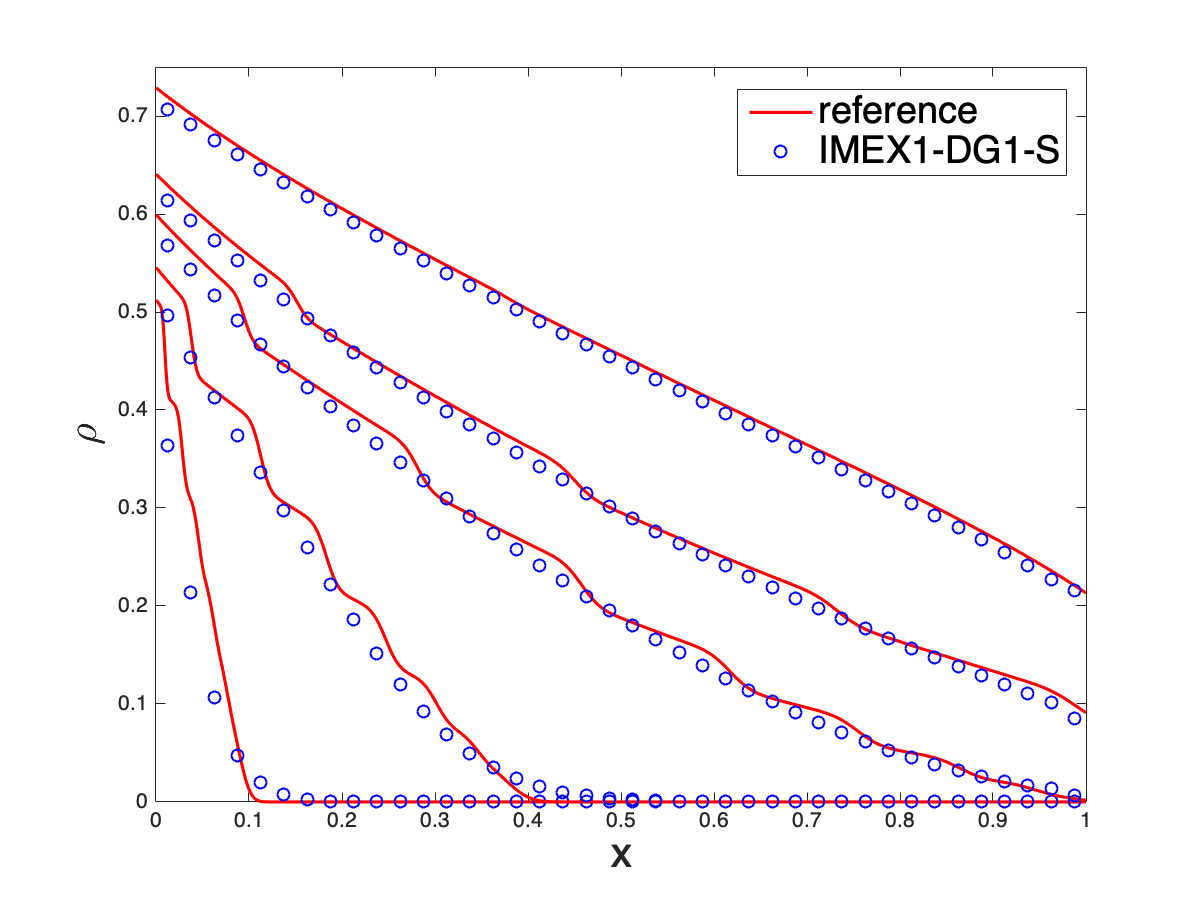}
    }
  \subfigure[$\rho$ for $\vareps=1$, IMEX$2$-DG$2$-S]
  {
        \includegraphics[width=0.31\linewidth] {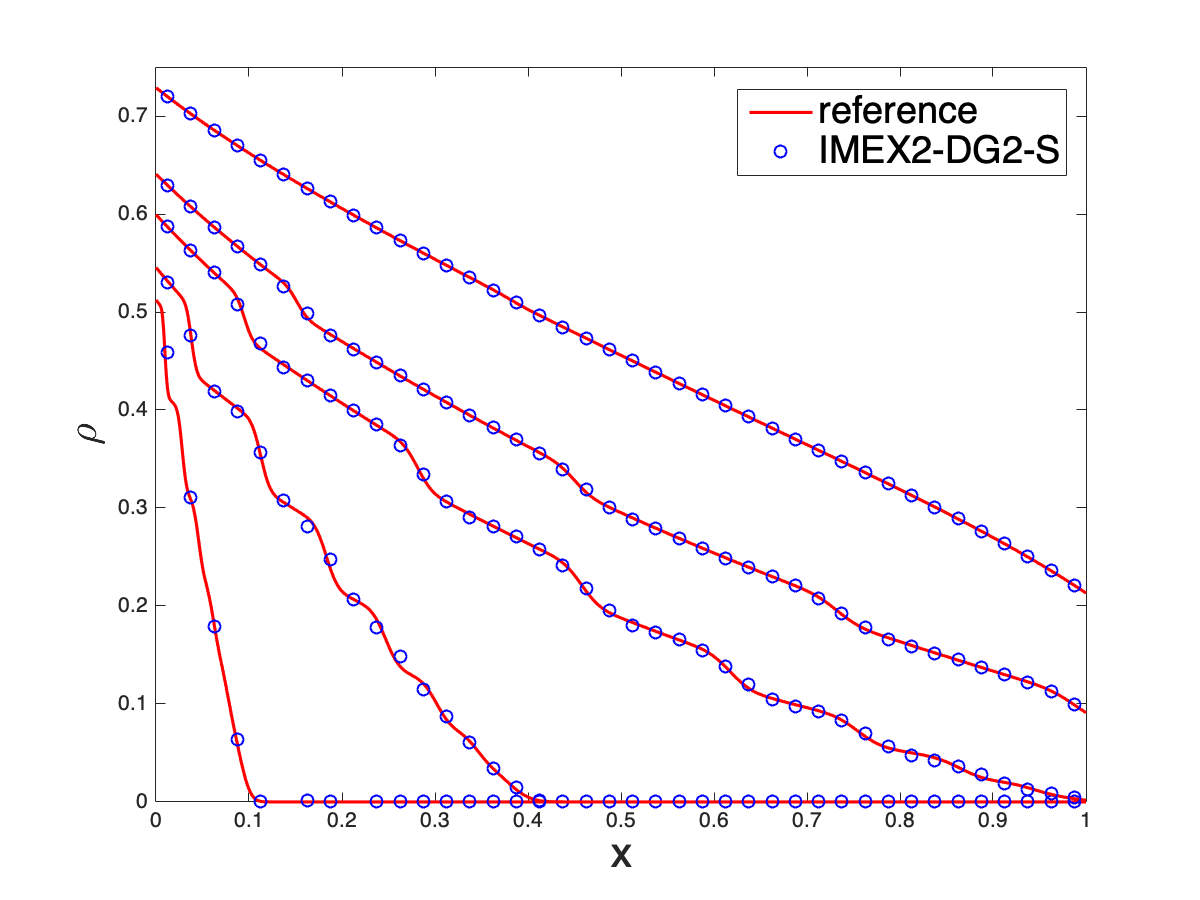}
    }
   \subfigure[$\rho$ for $\vareps=1$, IMEX$3$-DG$3$-S]
  {
        \includegraphics[width=0.31\linewidth] {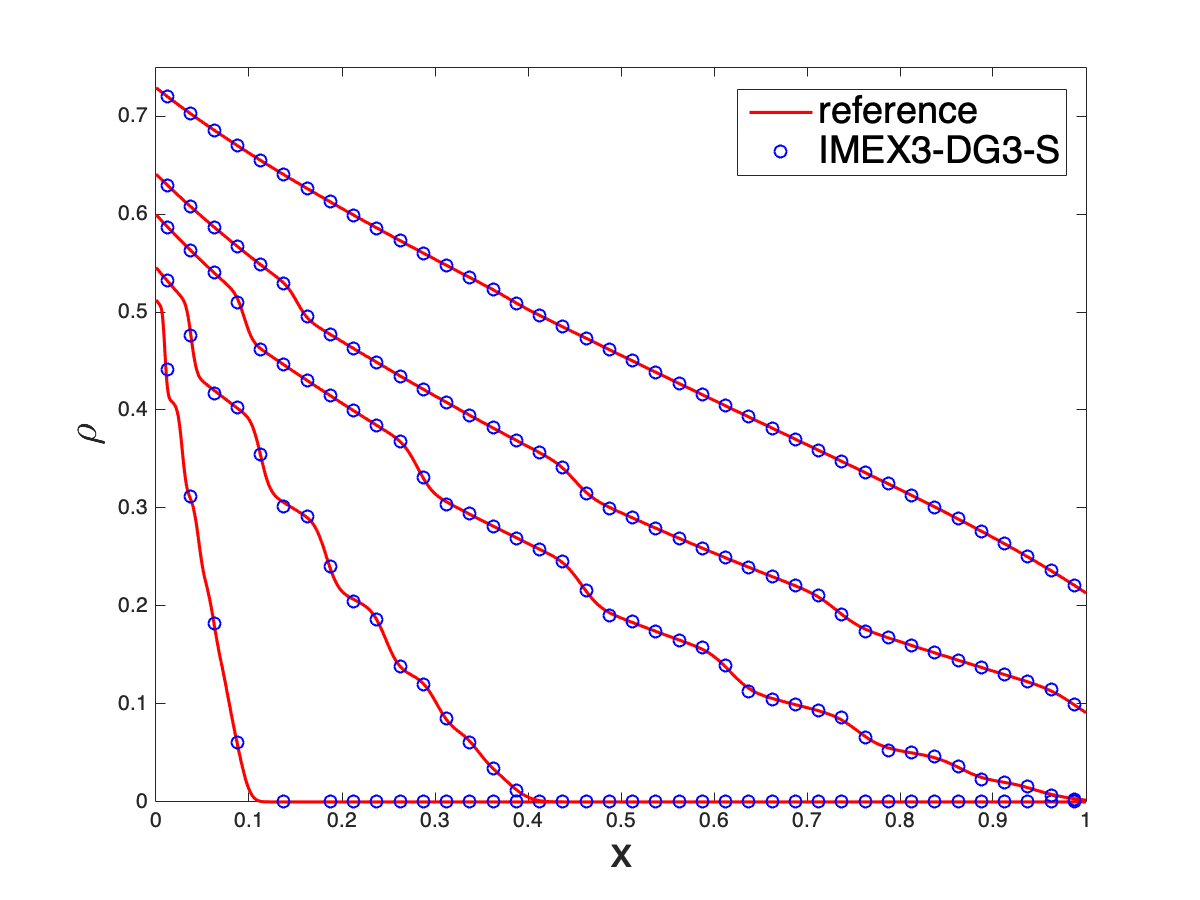}
    }
    \\
    \subfigure[$\rho$ for $\vareps=1$, IMEX$1$-LDG$1$ with $\omega=\exp(-\frac{\varepsilon}{h})$]
  {
     \includegraphics[width=0.31\linewidth]{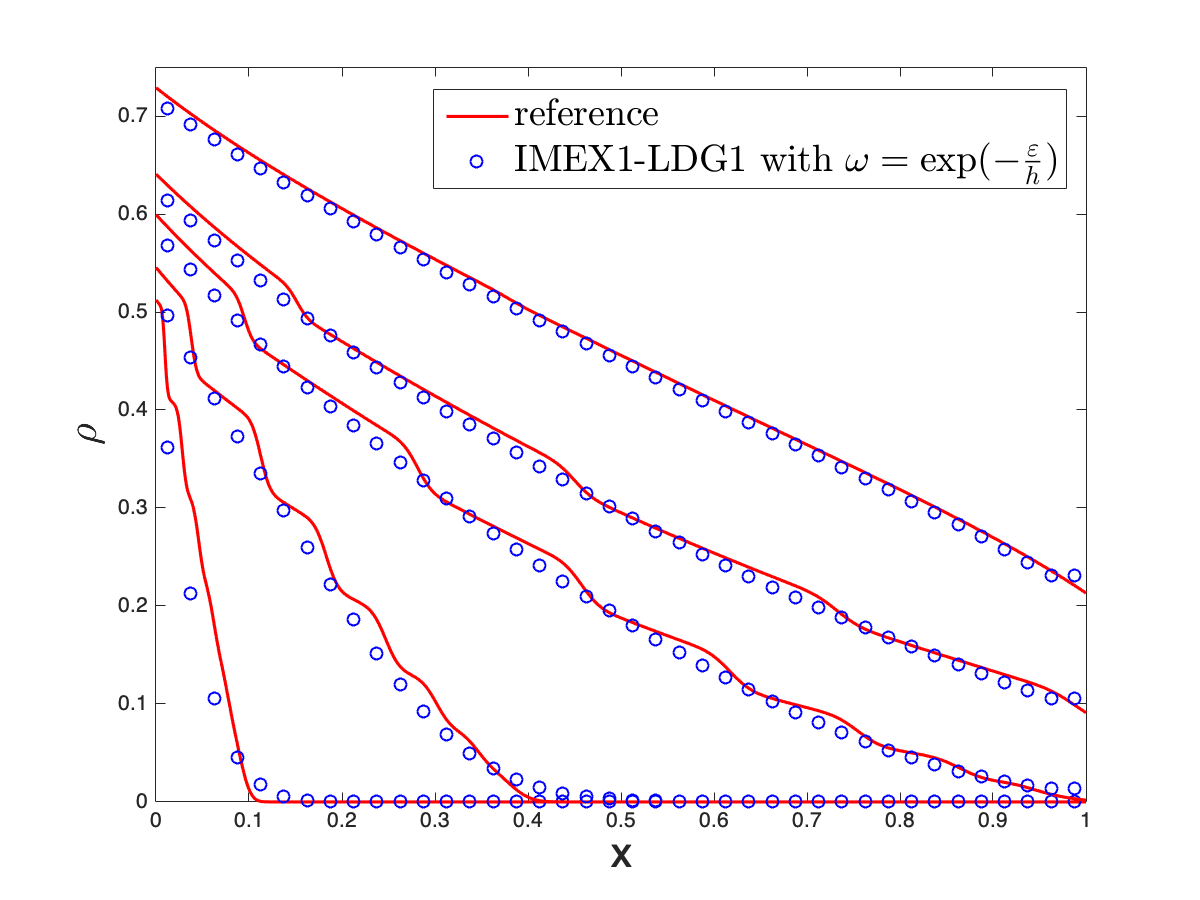}
    }
  \subfigure[$\rho$ for $\vareps=1$, IMEX$2$-LDG$2$ with $\omega=\exp(-\frac{\varepsilon}{h})$]
  {
        \includegraphics[width=0.31\linewidth] {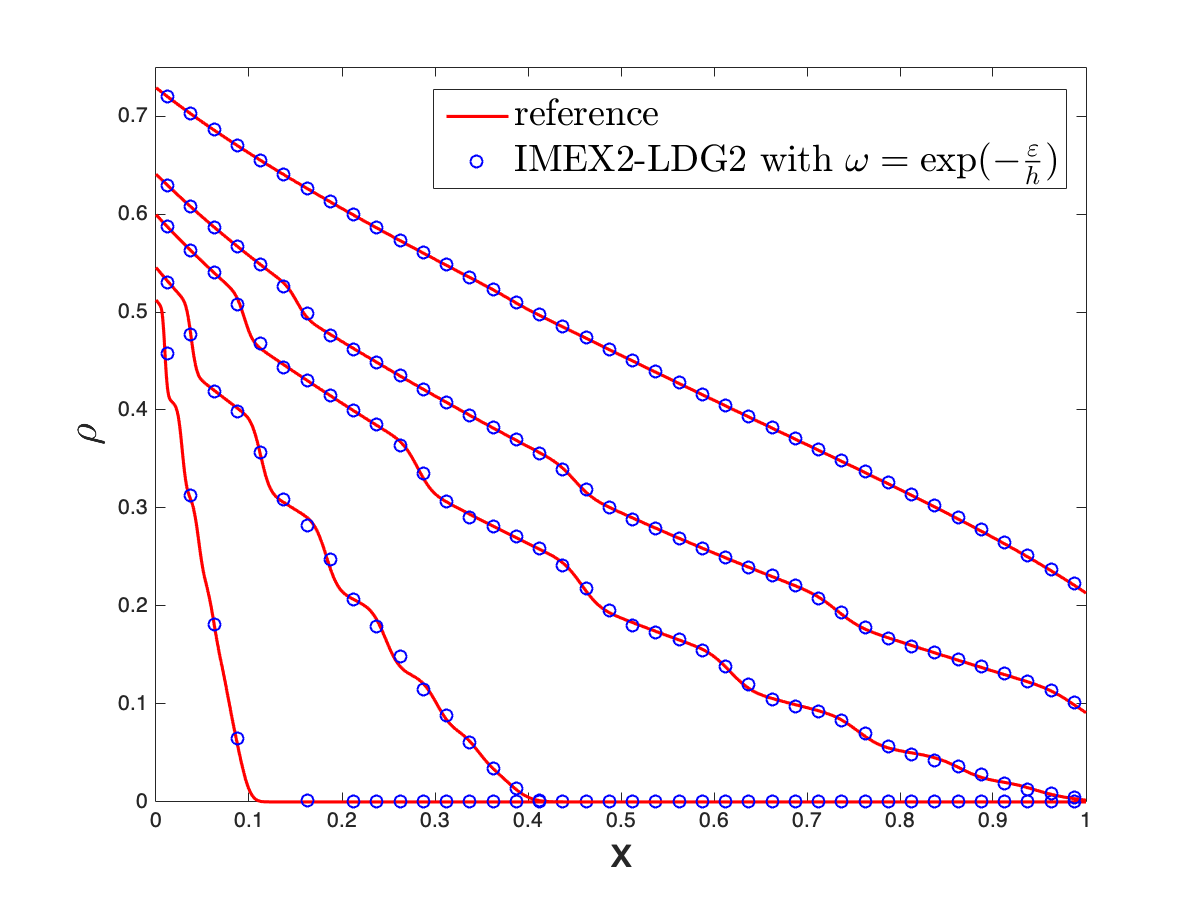}
    }
   \subfigure[$\rho$ for $\vareps=1$, IMEX$3$-LDG$3$ with $\omega=\exp(-\frac{\varepsilon}{h})$]
  {
        \includegraphics[width=0.31\linewidth] {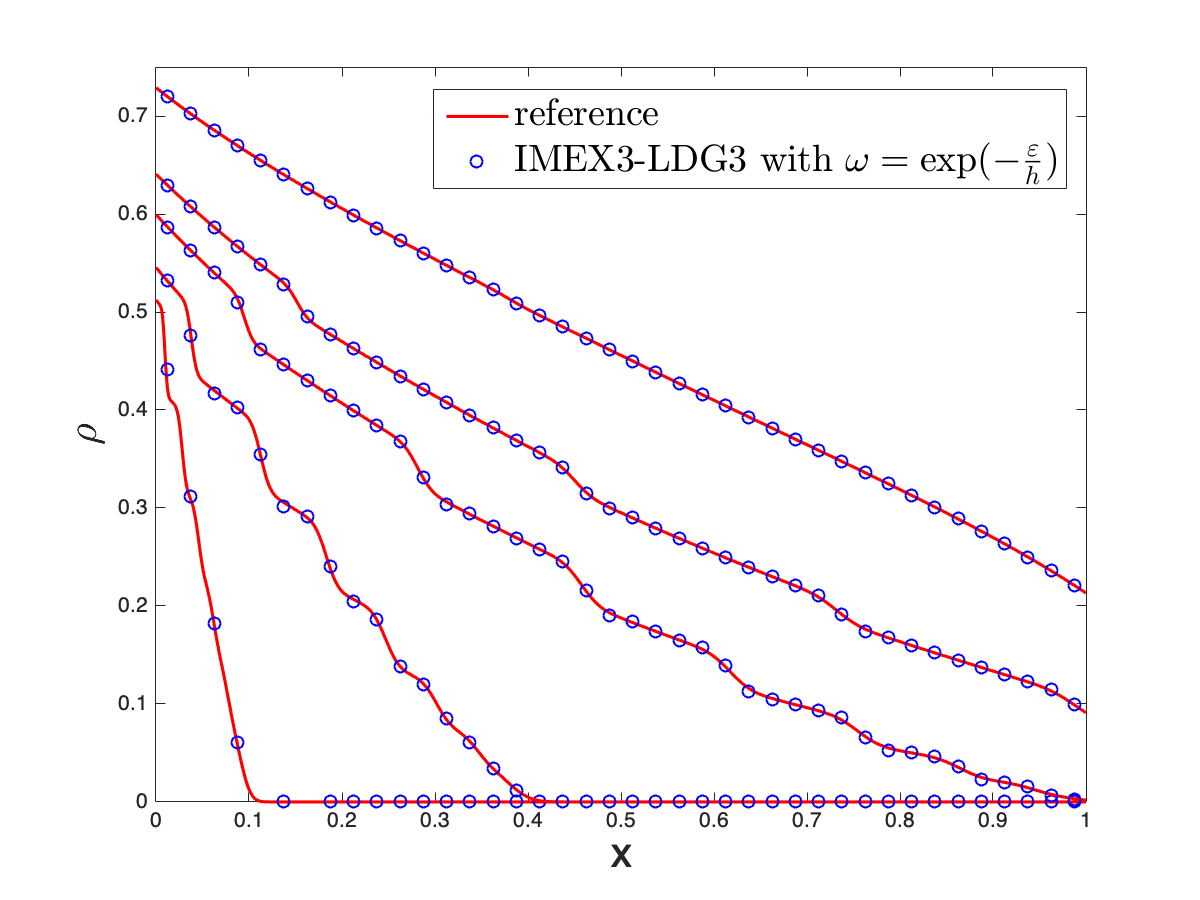}
    }
    \\
      \subfigure[$\rho$ for $\vareps=1$, IMEX$1$-LDG$1$ with $\omega=1$]
  {
     \includegraphics[width=0.31\linewidth]{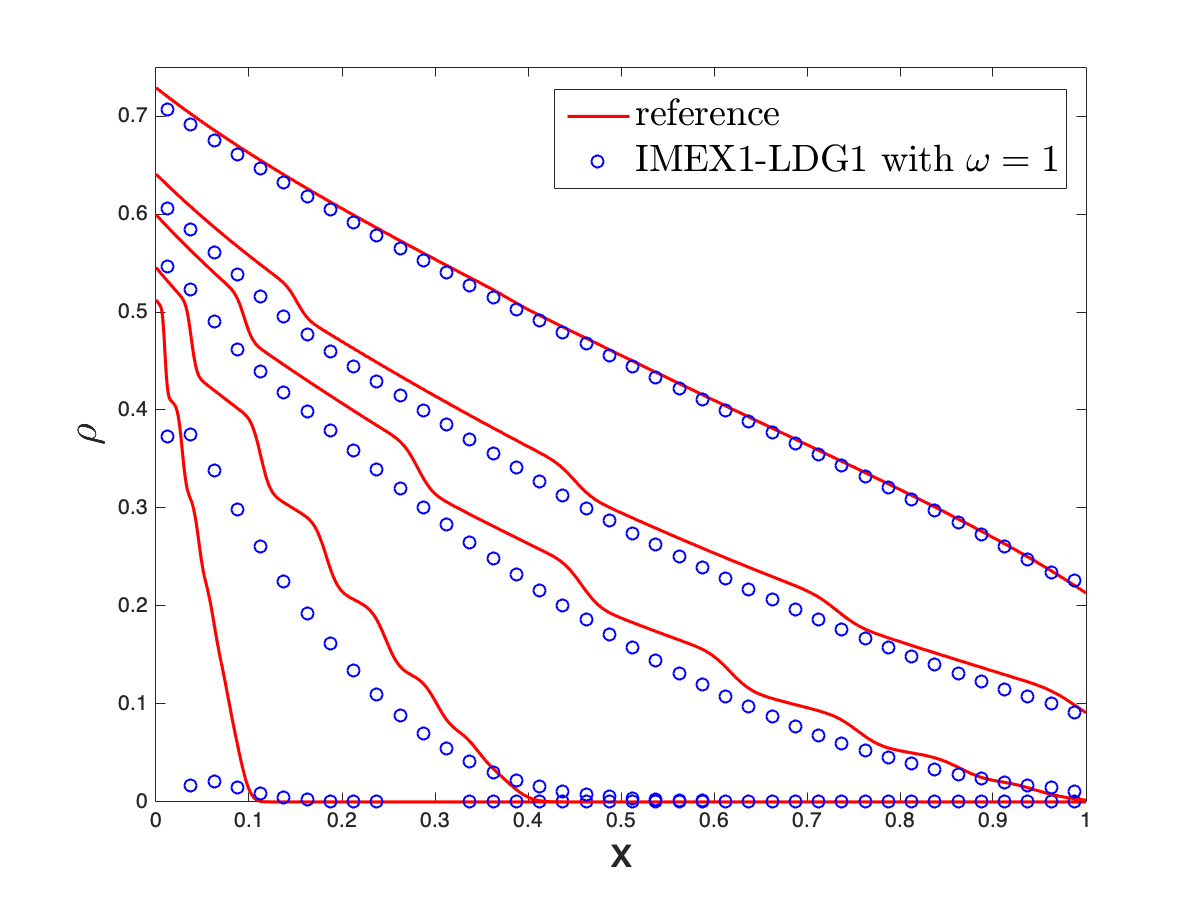}
    }
  \subfigure[$\rho$ for $\vareps=1$, IMEX$2$-LDG$2$ with $\omega=1$]
  {
        \includegraphics[width=0.31\linewidth] {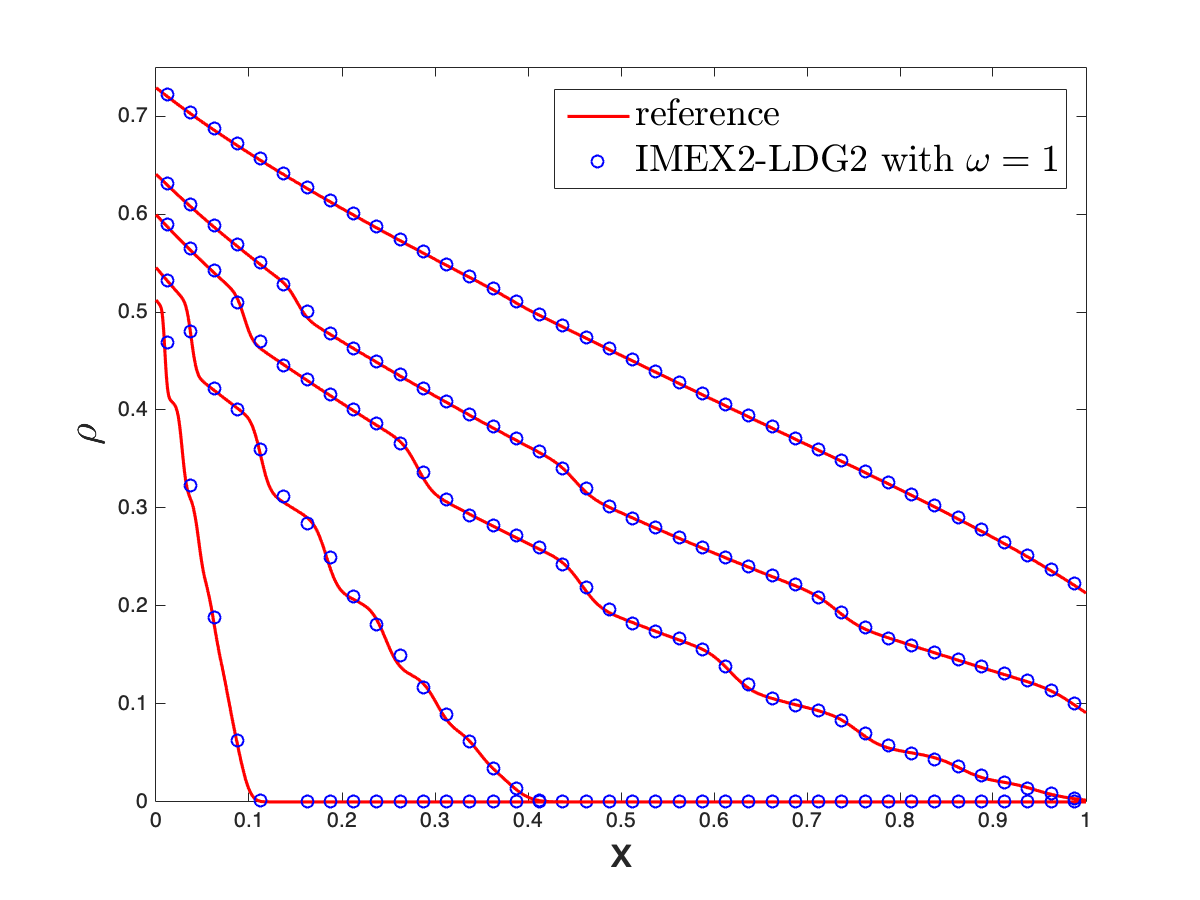}
    }
   \subfigure[$\rho$ for $\vareps=1$, IMEX$3$-LDG$3$ with $\omega=1$]
  {
        \includegraphics[width=0.31\linewidth] {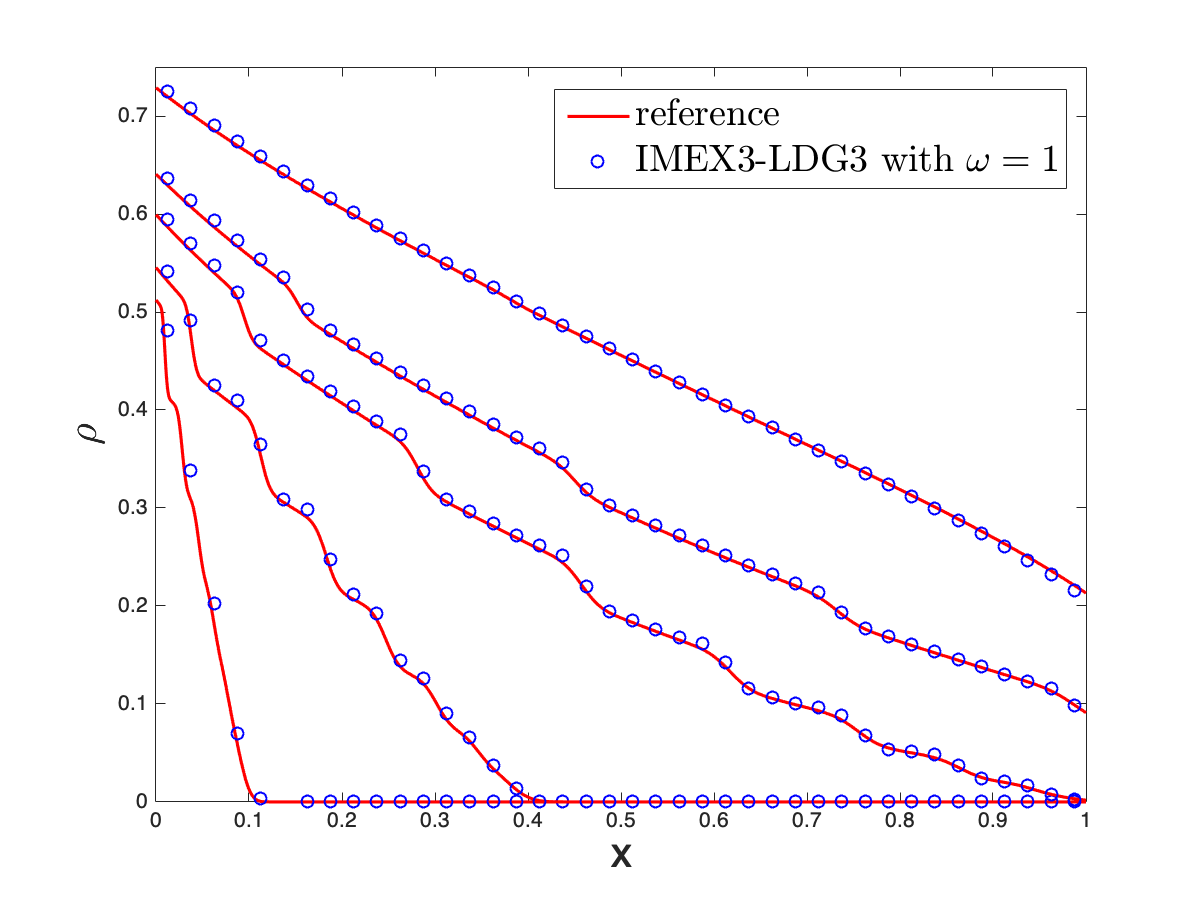}
    }
    \\
     \subfigure[$\rho$ for $\vareps=10^{-8}$, IMEX$1$-DG$1$-S]
  {
     \includegraphics[width=0.31\linewidth]{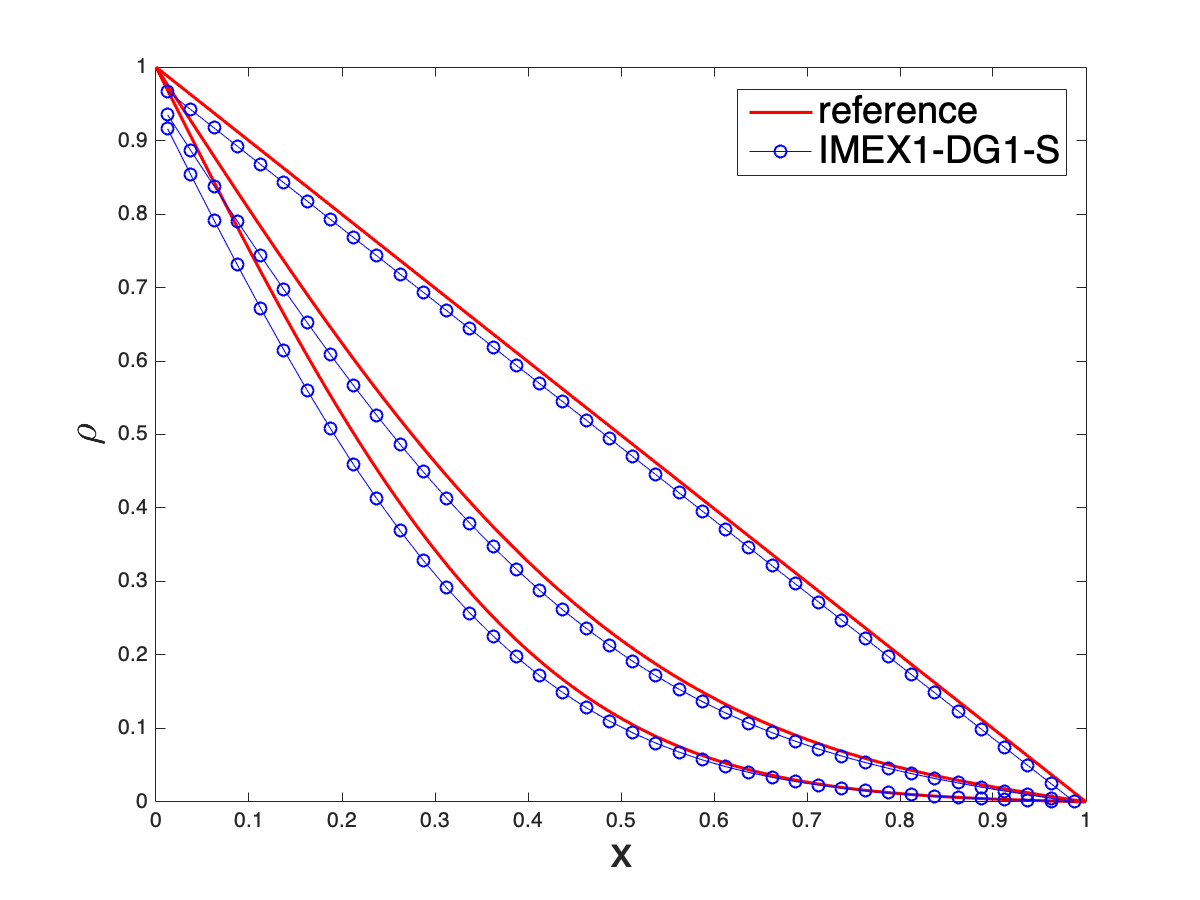}
    }
  \subfigure[$\rho$ for $\vareps=10^{-8}$, IMEX$2$-DG$2$-S]
  {
        \includegraphics[width=0.31\linewidth] {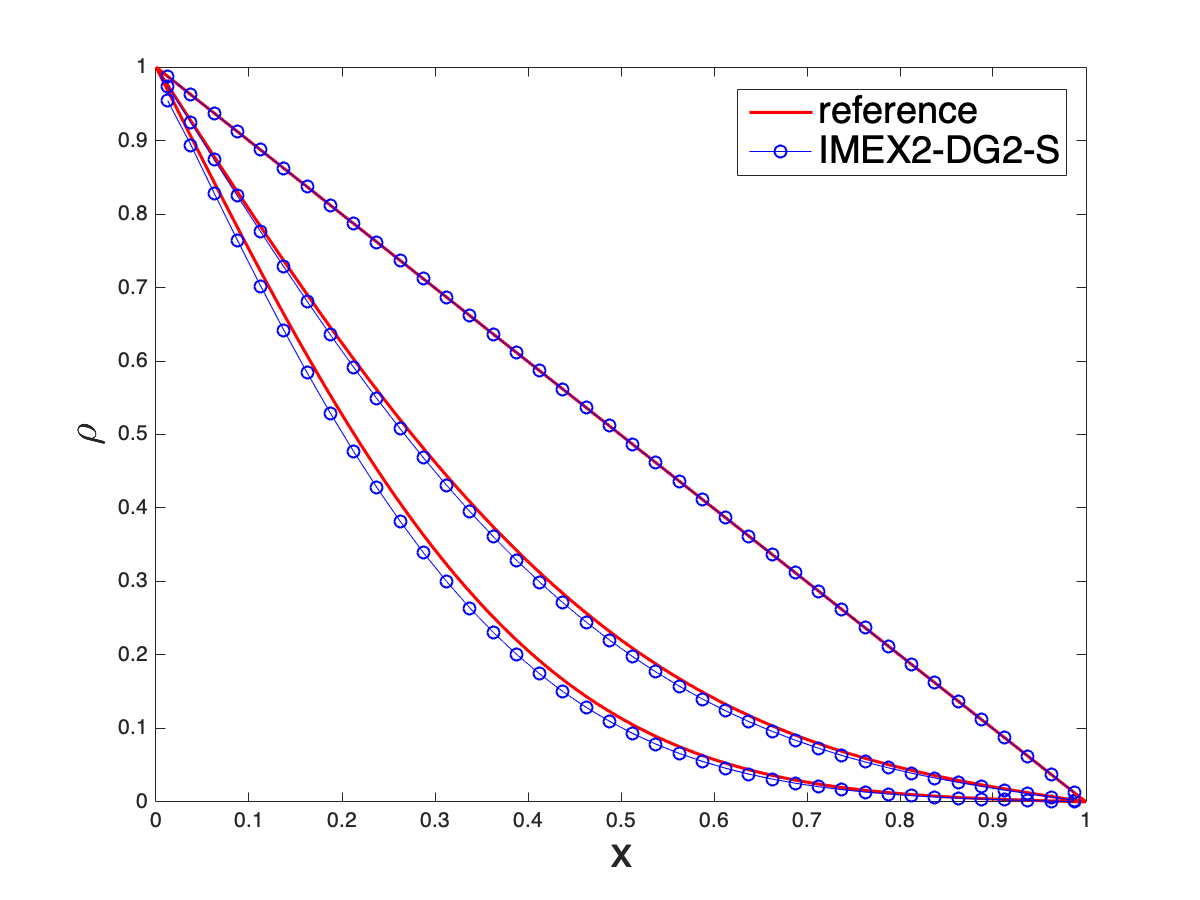}
    }
   \subfigure[$\rho$ for $\vareps=10^{-8}$, IMEX$3$-DG$3$-S]
  {
        \includegraphics[width=0.31\linewidth] {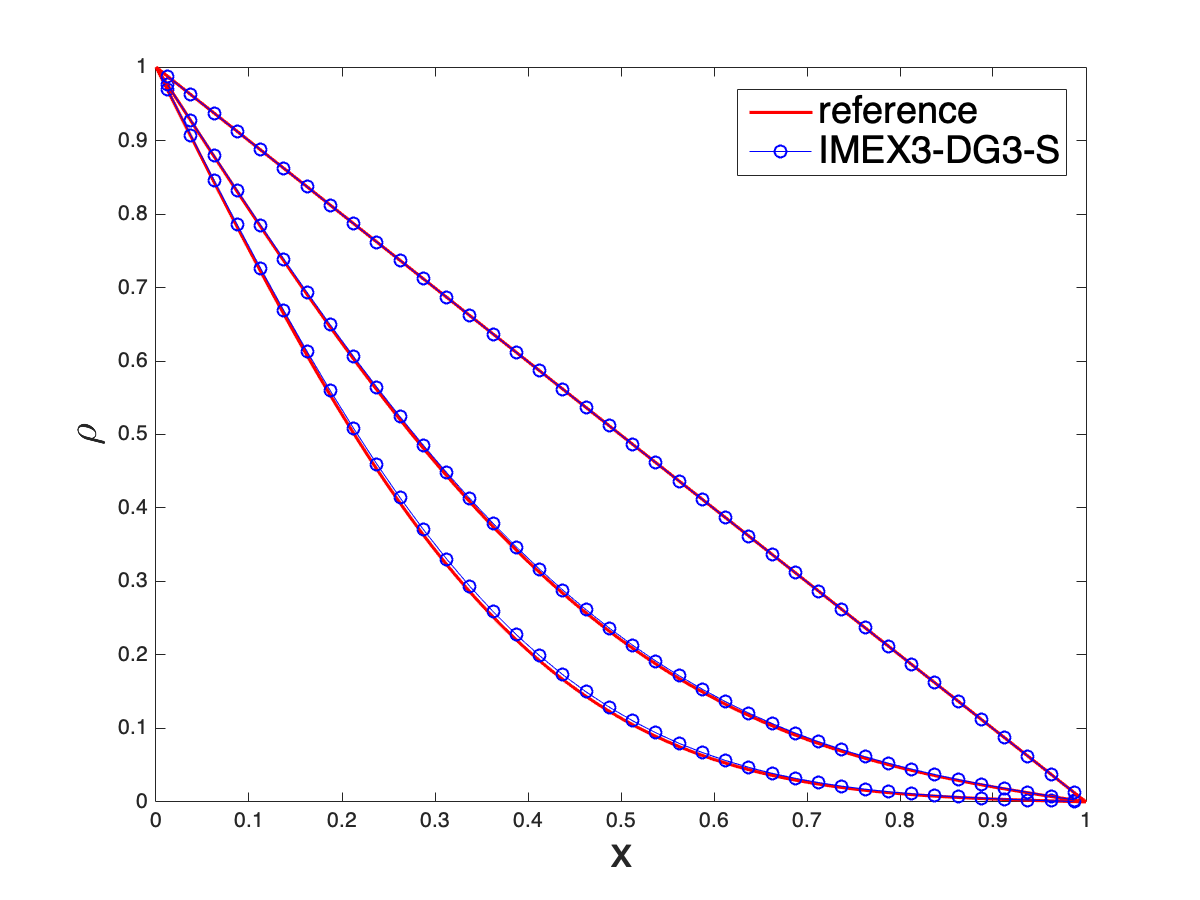}
    }
  \caption{ Example 4: diffusive and kinetic regime with isotropic inflow Dirichlet boundary conditions for one-group transport equation. Top three rows: $\rho$ for $\vareps=1$ and $T=0.1,\;0.4,\;1.0,\;1.6,\;4.0$; Bottom row: $\rho$ for $\vareps=10^{-8}$ and $T=0.15,\;0.25,\;2.0$.
  		}
  \label{fig:dirichlet}  
\end{figure}

\smallskip
\textbf{Example 5: Riemann problem for telegraph equation \cite{boscarino2013implicit,jang2015high}.} We consider a Riemann problem with $\Omega_v=\{-1,1\}$, $\sigma_s=1$, $\sigma_a=0$ and the initial data
\begin{align}
\begin{cases}
	\rho(x,0)=2,\; g(x,v,0)=0,\; x\leq 0,\\
	\rho(x,0)=1,\; g(x,v,0)=0,\; x>0.
\end{cases}
\end{align}
Two different cases are considered: the more kinetic case with $\vareps=0.7$ and  $\Omega_x=[-1,1]$, and the more diffusive case with
$\vareps=10^{-6}$ and $\Omega_x=[-2,2]$. For both,  a uniform partition of $\Omega_x$ with $h=\frac{1}{40}$ is used, and the final time is set as $T=0.15$. Numerical results for $\rho$ and $j(x,t)=\lgl vg \rgl$ are presented in Figure \ref{fig:riemann1} and Figure \ref{fig:riemann2}. The reference solution for $\vareps=0.7$ is obtained by the first order forward Euler upwind finite difference scheme solving \eqref{eq:f_model} with a uniform mesh $h=\frac{1}{1000}$ and $\Dt=0.05\varepsilon h$. The reference solution for $\vareps=10^{-6}$ is calculated by a central difference scheme solving the diffusion limit \eqref{eq:diffusion_limit} with $h=\frac{1}{1000}$ and $\Dt = 0.25h^2$. 

For the kinetic case 
with $\vareps=0.7$, results from all schemes match the reference solution well. 
Compared with the first order scheme,
 the second and the third order schemes give less dissipative results and capture the sharp features better. Small oscillation near discontinuity can be further reduced 
 by  applying nonlinear limiters. With the discontinuity present in the solution, when IMEX-LDG schemes are applied to this example (see Section 6.1.2 in \cite{peng2019stability}), the quality of the computed solutions really depends on the choice of the weight function.  For the diffusive case with  $\vareps=10^{-6}$, all schemes capture the  solution well, and high order schemes show better resolutions.

\begin{figure}
  \centering
  \subfigure[$\rho$, IMEX$1$-DG$1$-S]
  {
     \includegraphics[width=0.31\linewidth]{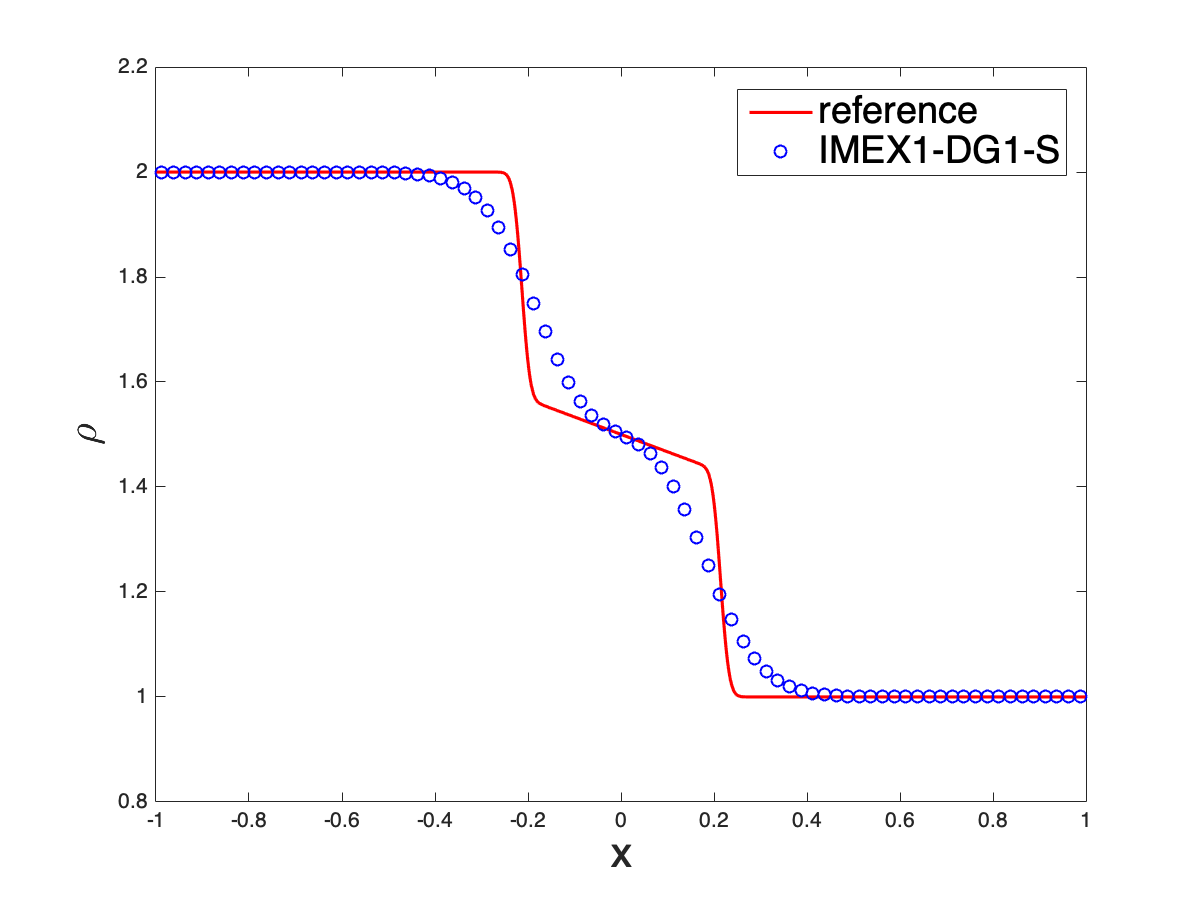}
    }
  \subfigure[$\rho$, IMEX$2$-DG$2$-S]
  {
        \includegraphics[width=0.31\linewidth] {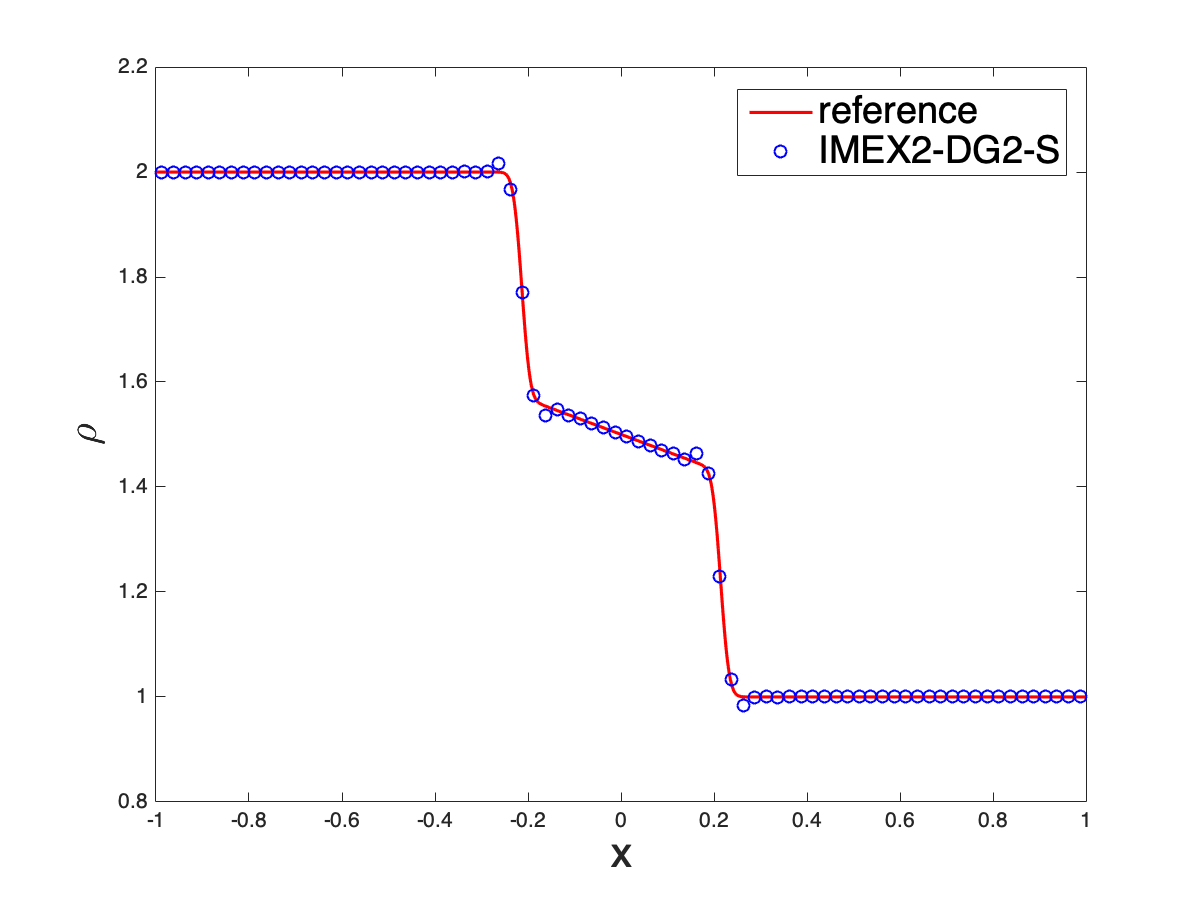}
    }
   \subfigure[$\rho$, IMEX$3$-DG$3$-S]
  {
        \includegraphics[width=0.31\linewidth] {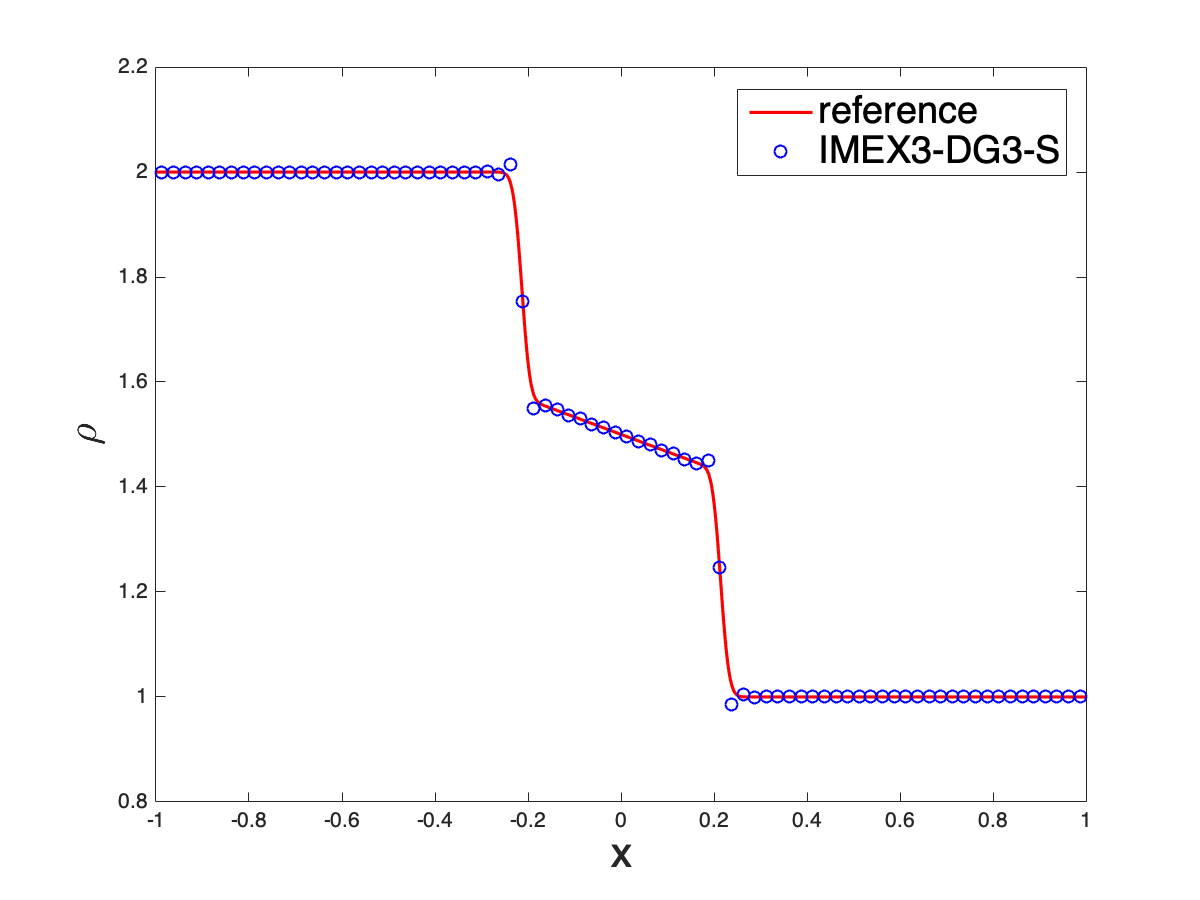}
    }
    \\
      \subfigure[$j$, IMEX$1$-DG$1$-S]
  {
     \includegraphics[width=0.31\linewidth]{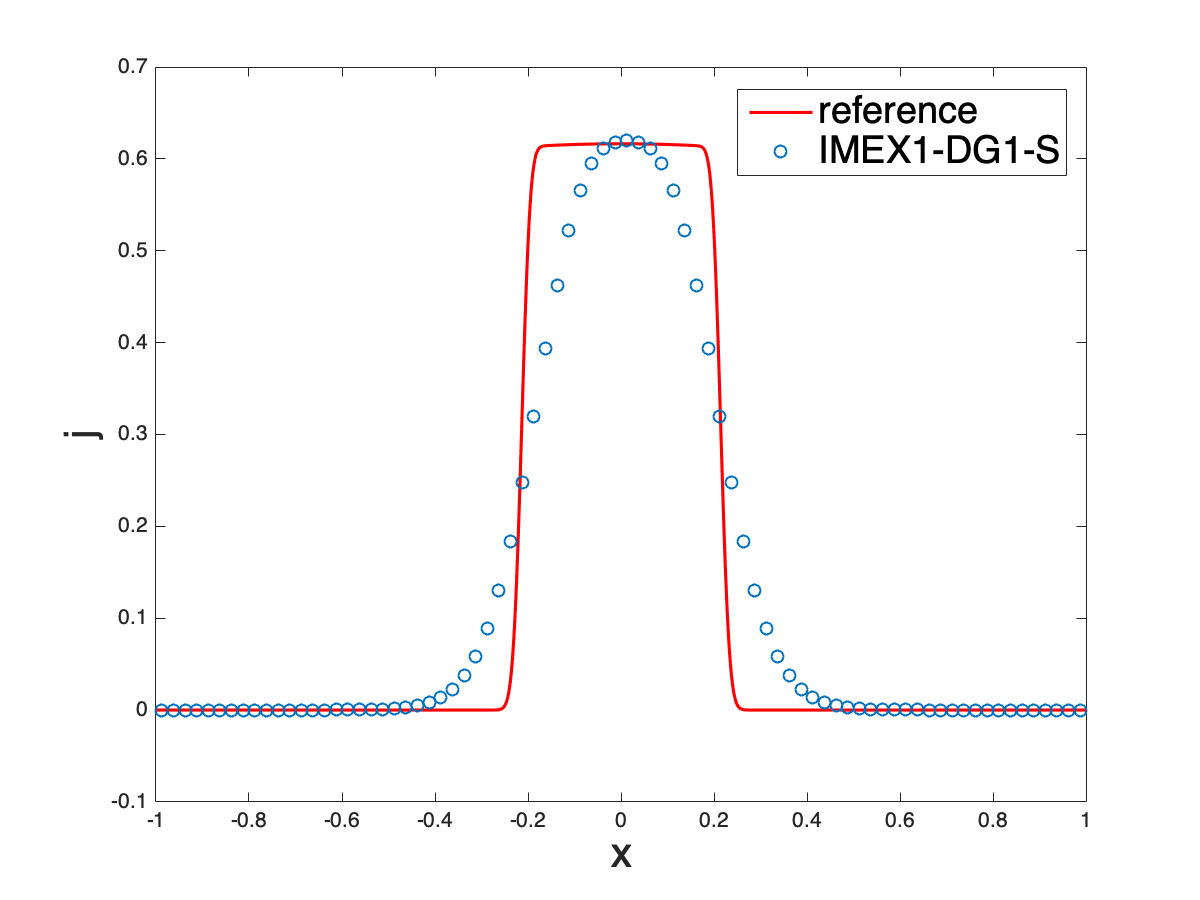}
    }
  \subfigure[$j$, IMEX$2$-DG$2$-S]
  {
        \includegraphics[width=0.31\linewidth] {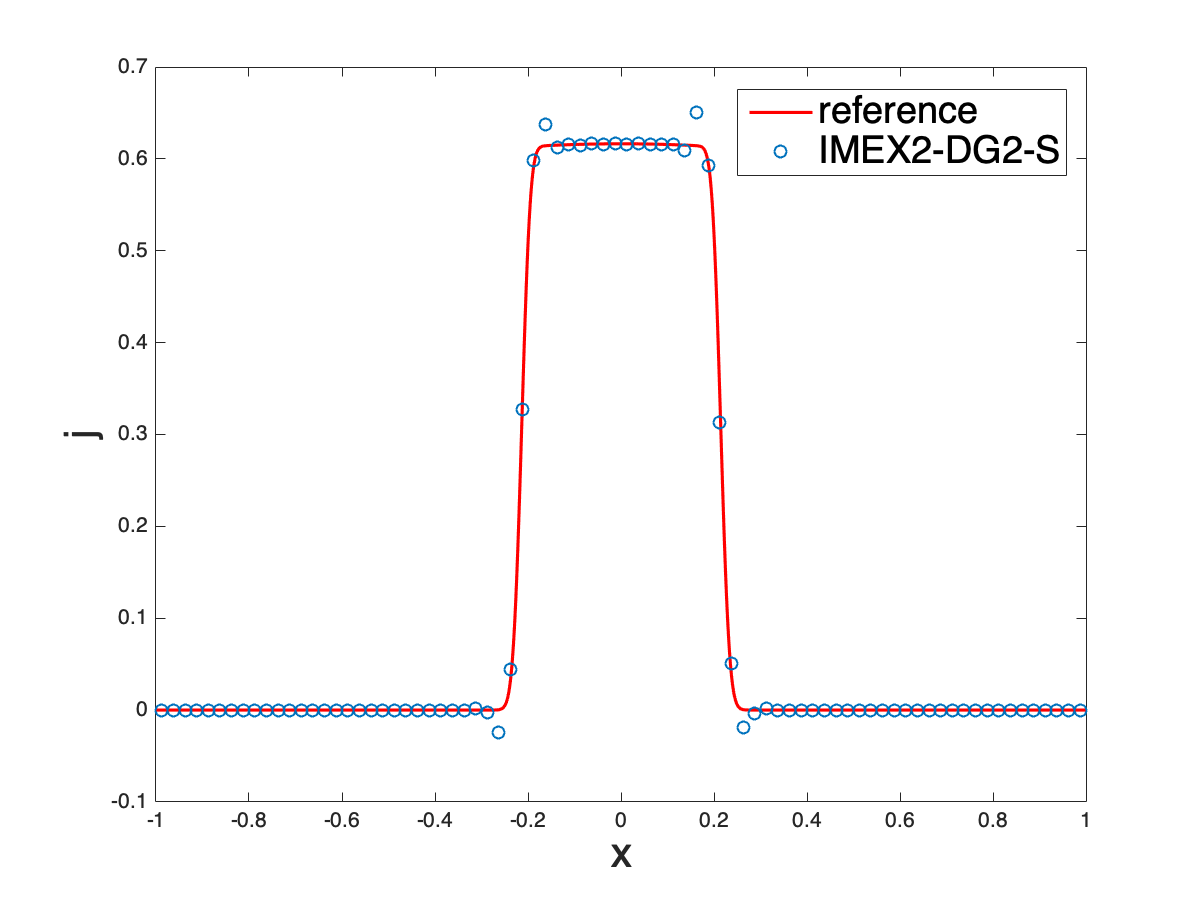}
    }
   \subfigure[$j$, IMEX$3$-DG$3$-S]
  {
        \includegraphics[width=0.31\linewidth] {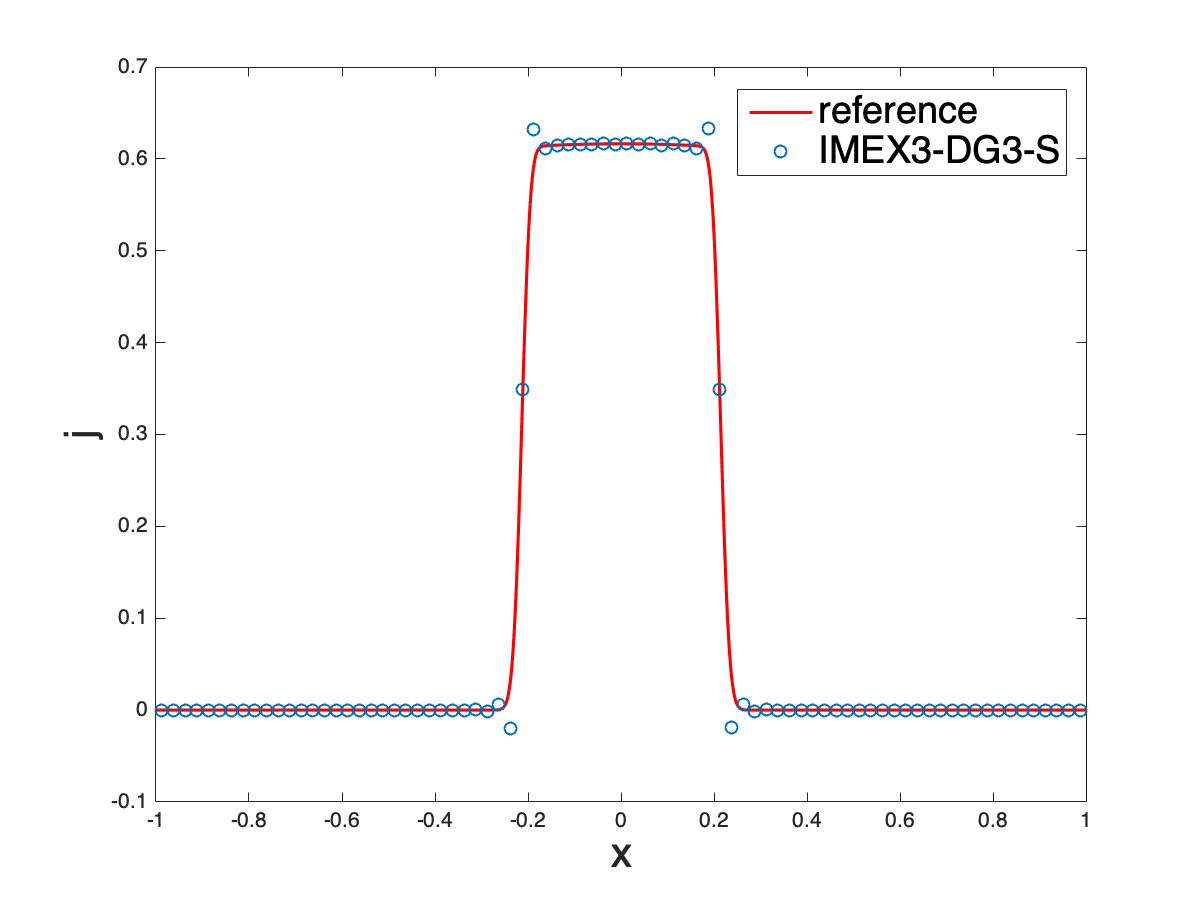}
    }
  \caption{ Example 5: Riemann problem for the telegraph equation. $\vareps=0.7$ and $T=0.15$.
  		}
  \label{fig:riemann1}  
\end{figure}

\begin{figure}
  \centering
  \subfigure[$\rho$, IMEX$1$-DG$1$-S]
  {
     \includegraphics[width=0.31\linewidth]{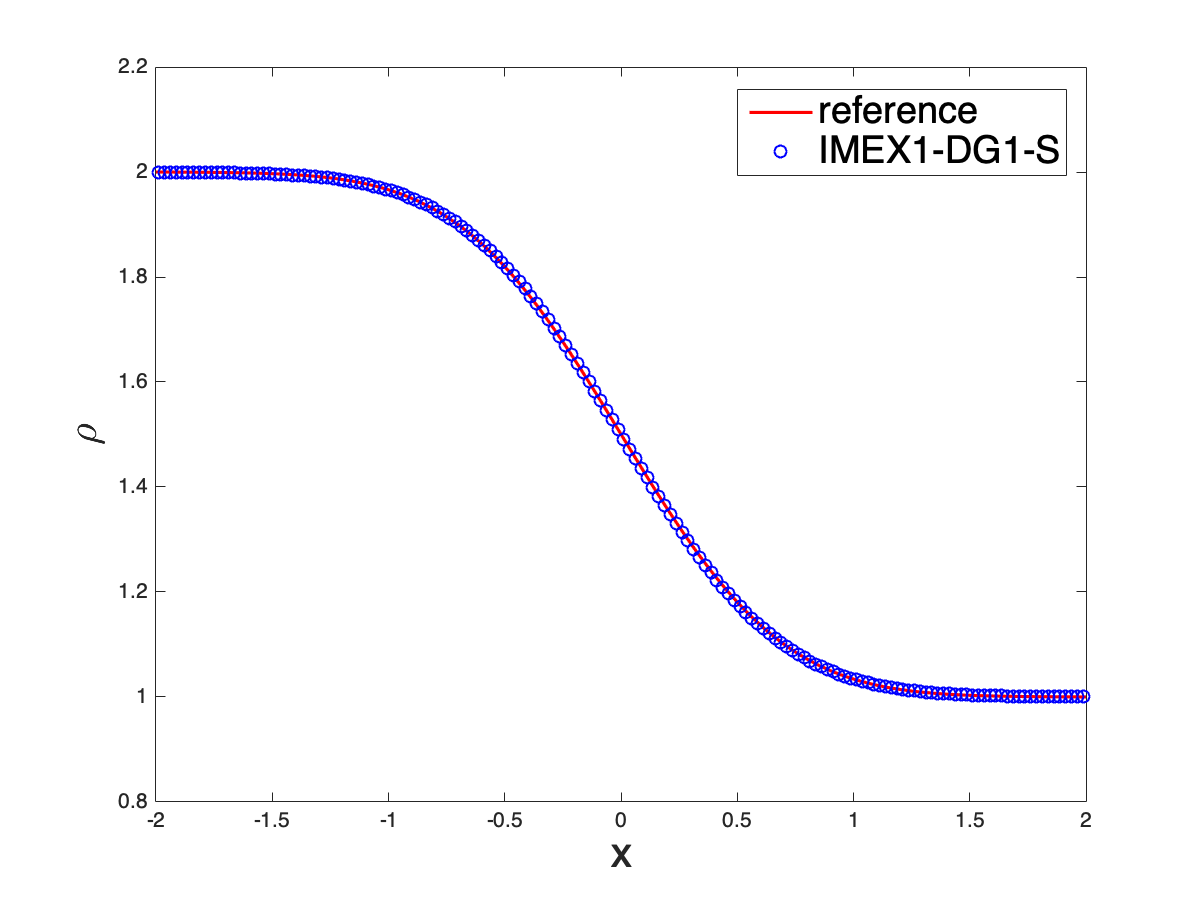}
    }
  \subfigure[$\rho$, IMEX$2$-DG$2$-S]
  {
        \includegraphics[width=0.31\linewidth] {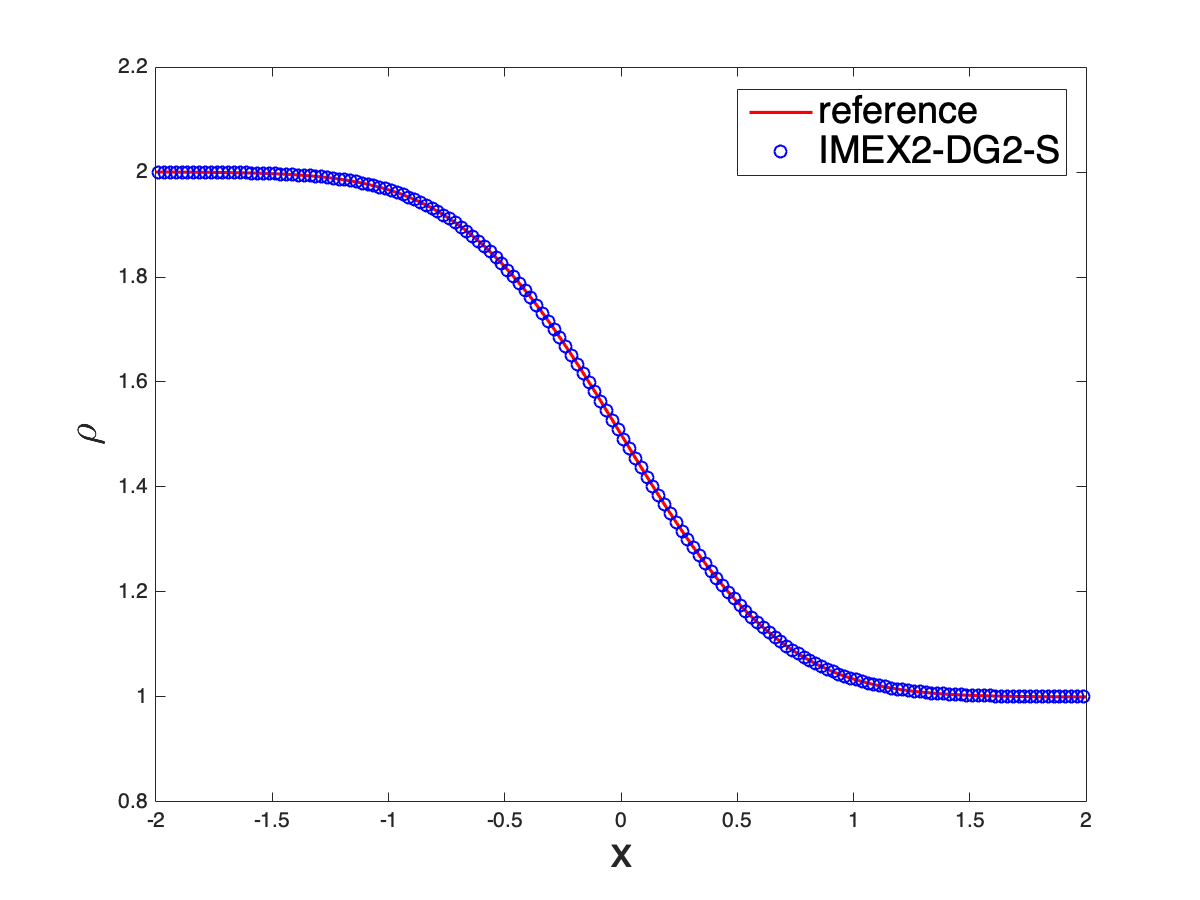}
    }
   \subfigure[$\rho$, IMEX$3$-DG$3$-S]
  {
        \includegraphics[width=0.31\linewidth] {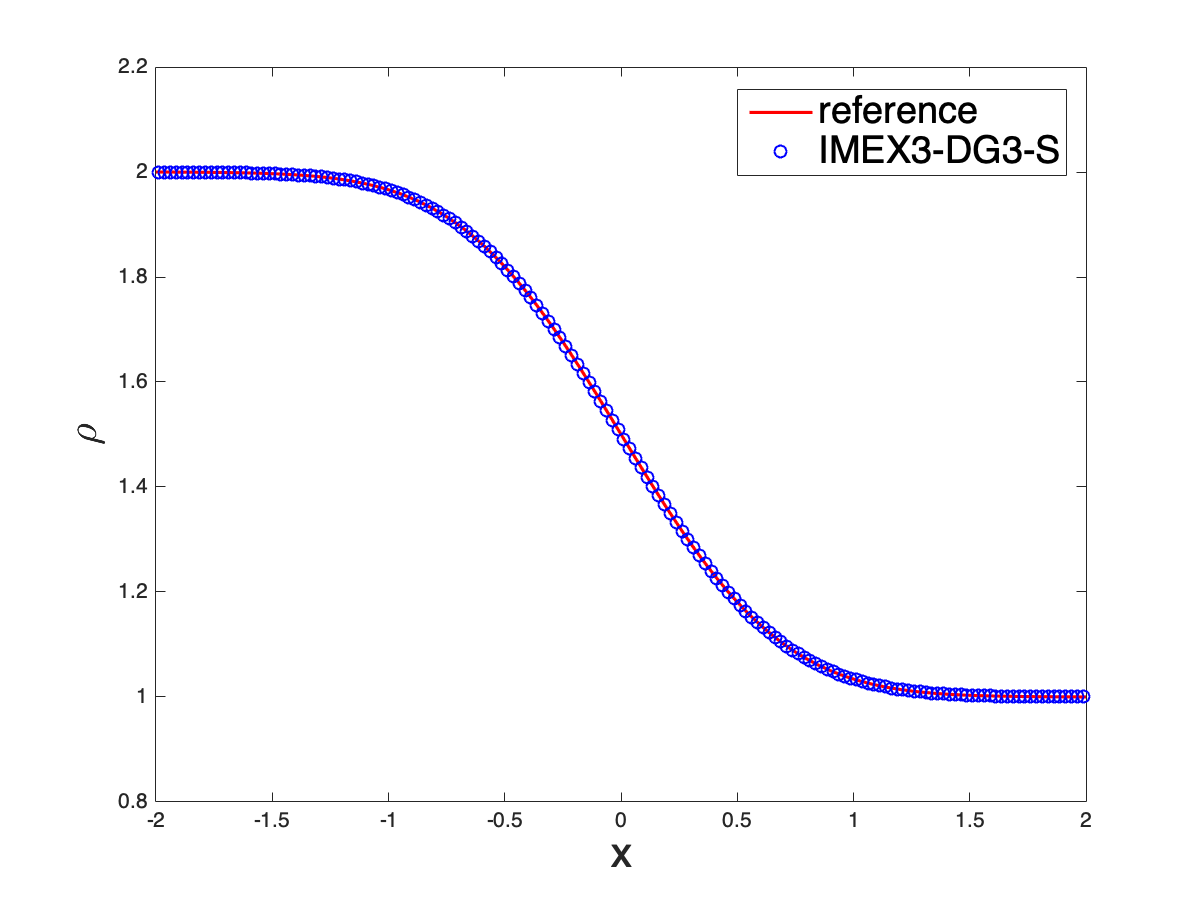}
    }
    \\
      \subfigure[$j$, IMEX$1$-DG$1$-S]
  {
     \includegraphics[width=0.31\linewidth]{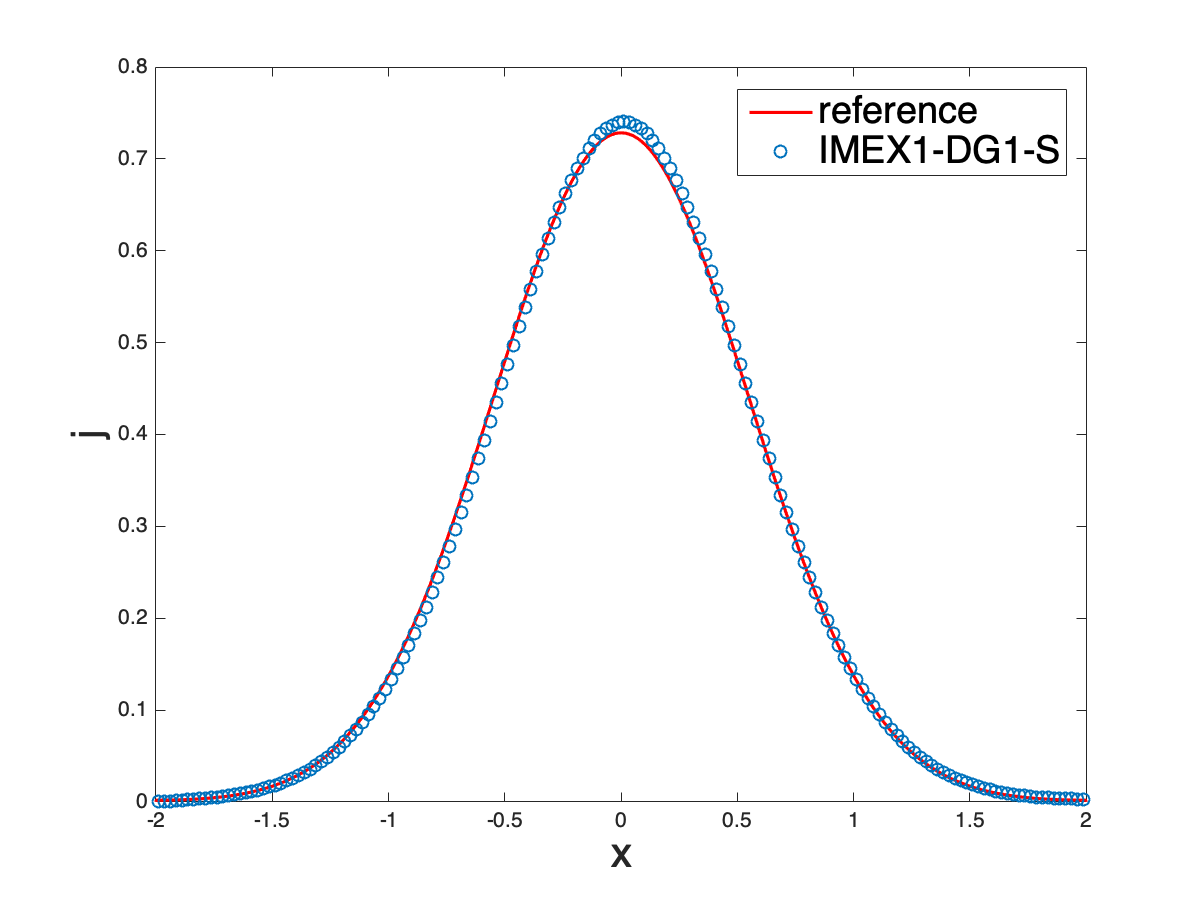}
    }
  \subfigure[$j$, IMEX$2$-DG$2$-S]
  {
        \includegraphics[width=0.31\linewidth] {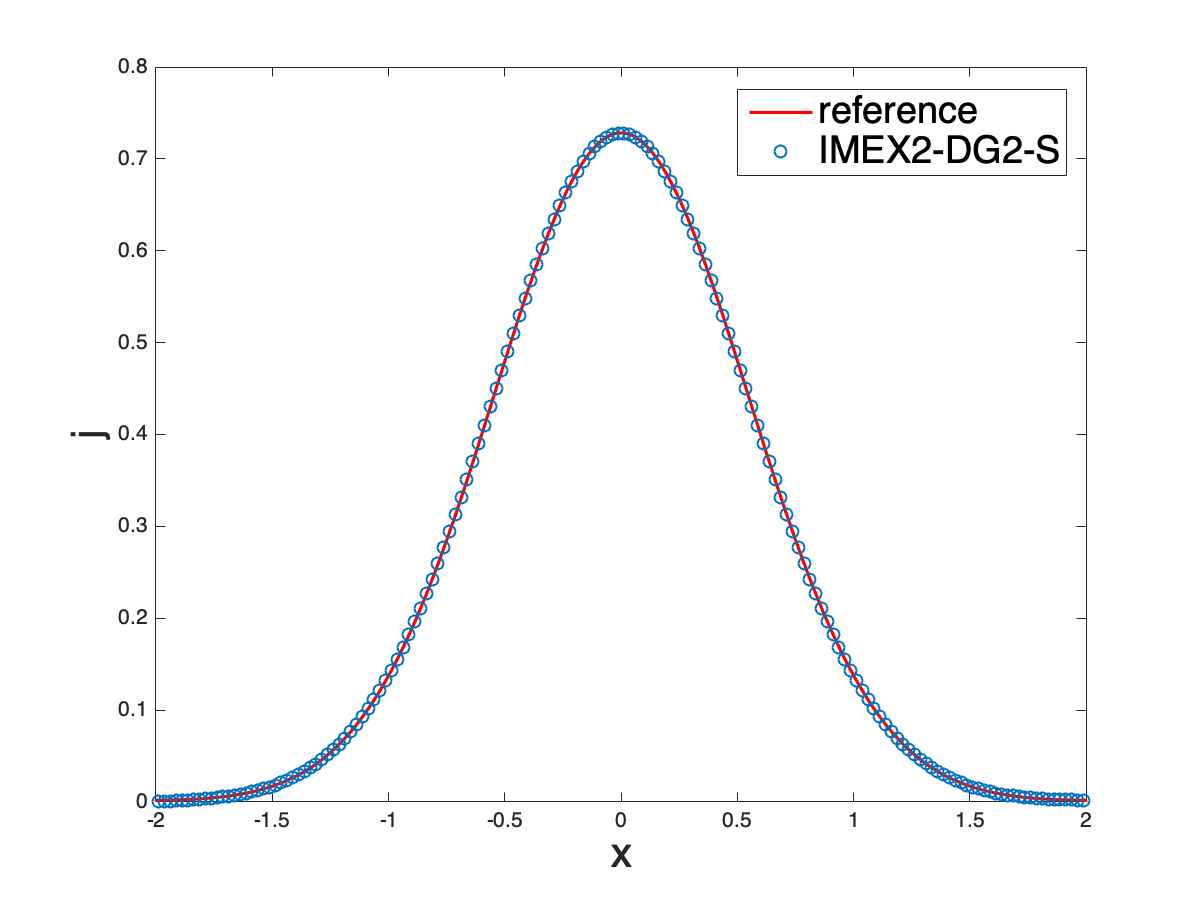}
    }
   \subfigure[$j$, IMEX$3$-DG$3$-S]
  {
        \includegraphics[width=0.31\linewidth] {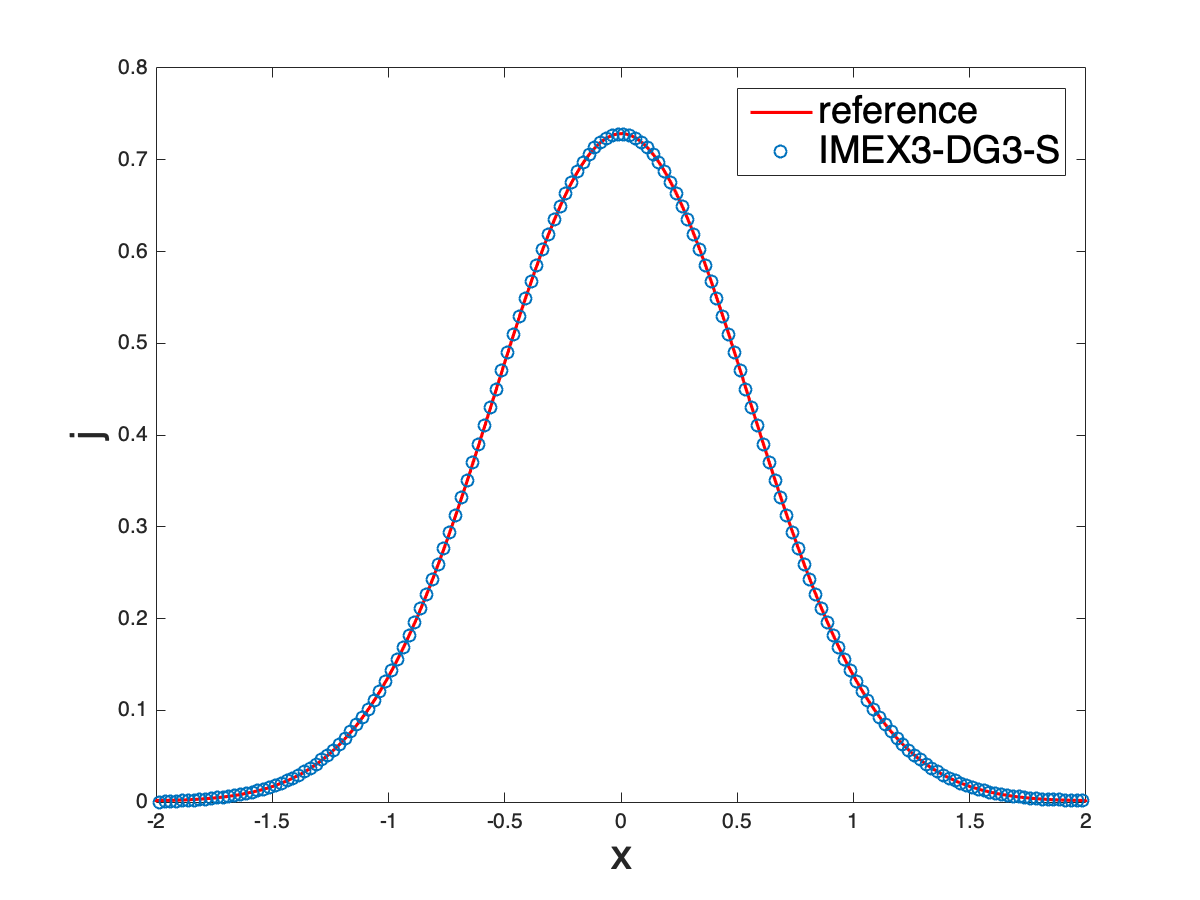}
    }
  \caption{ Example 5: Riemann problem for telegraph equation. $\vareps=10^{-6}$ and $T=0.15$.
  		}
  \label{fig:riemann2}  
\end{figure}

\section{Conclusions}
\label{sec:conclusion}

To design AP schemes with unconditional stability in the diffusive regime, numerical schemes are developed in \cite{boscarino2013implicit,peng2019stability} based on an additional reformulation to the decomposed system. The key of the additional reformulation is to introduce a weighted diffusive term. In this paper, to avoid issues related to the ad-hoc choice of the weight function, we design   IMEX-DG-S schemes by applying a new implicit-explicit temporal strategy. Asymptotic analysis confirms the AP property of the proposed schemes. Energy type stability analysis for the IMEX1-DG1-S scheme and Fourier type stability analysis for the IMEX$k$-DG$k$-S scheme, $k=1, 2, 3$, are presented. These analyses verify uniform stability of the schemes with respect to $\varepsilon$ and  unconditional stability in the diffusive regime. To achieve these AP and stability properties with computational cost similar to the IMEX-LDG schemes in \cite{peng2019stability}, the Schur complement is applied on the linear solver level. Numerical examples are presented to demonstrate the performance of the IMEX-DG-S schemes and their advantages over the weight-dependent IMEX-LDG schemes in \cite{peng2019stability}.



\renewcommand\refname{References}
\bibliographystyle{plain}
\bibliography{bib_peng}
\end{document}